\newtheorem{prop}{Proposition}[section]
\newtheorem*{defi*}{Definition}
\newtheorem{defi}{Definition}[section]
\newtheorem{lem}{Lemma}[section]
\newtheorem{rem}{Remark}[section]
\newtheorem{thm}{Theorem}[section]
\newtheorem{coro}{Corollary}[section]
\newtheorem{ex}{Example}[section]
\newcommand{\mylabel}[2]{#2\def\@currentlabel{#2}\label{#1}}
\numberwithin{equation}{section}
    \let\old@@@nomenclature=\@@@nomenclature        
        \newcounter{@nomcount} \setcounter{@nomcount}{0}%
        \renewcommand\the@nomcount{\two@digits{\value{@nomcount}}}
        \def\@@@nomenclature[#1]#2#3{
          \addtocounter{@nomcount}{1}%
        \def\@tempa{#2}\def\@tempb{#3}%
          \protected@write\@nomenclaturefile{}%
          {\string\nomenclatureentry{\the@nomcount\nom@verb\@tempa @[{\nom@verb\@tempa}]%
          \begingroup\nom@verb\@tempb\protect\nomeqref{\theequation}%
          |nompageref}{\thepage}}%
          \endgroup
          \@esphack}%
\def\div{\mathop{\operatorname{div}}\nolimits}
\Crefname{coro}{Corollary}{Corollaries}
\Crefname{thm}{Theorem}{Theorems}
\title{Numerical spectral analysis of Cauchy-type inverse problems: \\A probabilistic approach \\[0.5 cm]
}
\author{
Iulian C\^{i}mpean$^{1,3,}$\thanks{Corresponding author. E-mail: \texttt{iulian.cimpean@unibuc.ro;} \href{https://orcid.org/0000-0002-3239-6834}{ORCID ID: 0000-0002-3239-6834}},
Andreea Grecu$^{2,}$\thanks{E-mail: \texttt{andreea.grecu@ismma.ro;} \href{https://orcid.org/0000-0001-5829-7765}{ORCID ID: 0000-0001-5829-7765}},
Liviu Marin$^{1,2,}$\thanks{E-mail: \texttt{liviu.marin@fmi.unibuc.ro;} \href{https://orcid.org/0000-0003-4009-1181}{ORCID ID: 0000-0003-4009-1181}}
}
\date{\small
$^1$Department of Mathematics, Faculty of Mathematics and Computer Science, University of Bucharest, 
14~Academiei, 010014~Bucharest, Romania\\
$^2$``Gheorghe Mihoc~--~Caius Iacob" Institute of Mathematical Statistics and Applied Mathematics of the Romanian Academy, 13~Calea 13 Septembrie, 050711~Bucharest, Romania\\
$^3$``Simion Stoilow" Institute of Mathematics of the Romanian Academy, 21~Calea Grivi\c{t}ei,  010702~Bucharest, Romania 
}
\begin{document}

\maketitle
\begin{abstract}
We investigate the inverse Cauchy and data completion problems for elliptic partial differential equations in a bounded domain $D \subset \mathbb{R}^d$, $d \ge 2$, with a special emphasis on the steady-state heat conduction in anisotropic media. More precisely, boundary conditions are prescribed on an accessible part of the boundary $\varnothing \neq \Gamma_0 \subsetneqq \partial{D}$ and/or internal conditions are available inside the domain $D$ and the aim is to reconstruct the solution to these inverse problems in the domain and on the inaccessible remaining boundary $\Gamma_1 \coloneqq \partial{D} \setminus \Gamma_0$.
Although such severely ill-posed problems have been studied intensively in the past decades, deriving efficient methods for approximating their solution still remains challenging in the general setting, e.g., in high dimensions, for solutions and/or domains with singularities, in complex geometries, etc.
Herein, we derive a fundamental probabilistic framework for the stable reconstruction of the solution to the Cauchy and data completion problems in steady-state anisotropic heat conduction, as well as enhancing the knowledge on the impact of the geometry of the domain $D$ and the structure of the conductivity tensor $\mathbf{K}$ on the stability of these inverse problems.
This is achieved in three steps: ({\it i}) the spectrum of the direct problem is simulated using stochastic estimators; ({\it ii}) the singular value decomposition of the corresponding direct operator is performed; and ({\it iii}) for the prescribed measurements, a natural subspace of approximate solutions is constructed. 
This approach is based on elliptic measures, in conjunction with probabilistic representations and parallel Monte Carlo simulations. 
Thorough numerical simulations performed on GPU, for various two- and three-dimensional geometries, are also provided.
\end{abstract}

\noindent \textbf{Keywords:} inverse boundary value problems; 
elliptic operator; elliptic measure; harmonic density; 
probabilistic representation; Monte Carlo methods; walk-on-spheres; GPU-parallel computing.

\medskip
\noindent \textbf{Mathematics Subject Classification:} 65N12, 65N15, 65N21, 65N25, 65N75, 35J25, 65C05, 60J65, 65C40.


\newpage

\section{Introduction} 
\label{section:intro}
\subsection{Formulation of the inverse problem}
\label{ss:formulation}
Consider the numerical treatment of inverse Cauchy-type problems for elliptic partial differential equations in a bounded domain with a special emphasis on steady-state heat conduction in anisotropic media. Hence the main aim is to reconstruct numerically the solution to such a problem from data prescribed merely on some accessible portion of the boundary and/or in the domain. More precisely, we consider a material that occupies a bounded domain $D \subset \mathbb{R}^d$, $d \geq 2$, and is characterised by an inhomogeneous and anisotropic conductivity tensor $\mathbf{K} \coloneqq \big( K_{ij}(\mathbf{x}) \big)_{i,j=\overline{1,d}} \in \mathbb{R}^{d \times d}$, $\mathbf{x}\in D$, such that
\begin{enumerate}[label=({\bf H.\arabic*})]
\item \label{Hyp_D} $\partial{D} = \Gamma_0 \cup \Gamma_1$ with $\Gamma_1\cap\Gamma_0 = \varnothing$ ($\Gamma_0$ and $\Gamma_1$ should be regarded as the accessible and inaccessible parts of the boundary $\partial{D}$, respectively);
\item \label{Hyp_K} $\mathbf{K} \in \mathbb{R}^{d \times d}$ is a symmetric and strictly elliptic matrix of bounded and measurable coefficients.
\end{enumerate}
If $\Gamma_0 \subset \partial D$ is an open and Lipschitz set in the sense of \cite{Al09}, then the classical {\it inverse Cauchy problem}, see \Cref{Fig00a}, reads as: 
\begin{eqnarray}  \label{eq:ICP}
\left|
\begin{array}{l}
\textrm{Given } u_0 \in H^{1/2}(\Gamma_0) \textrm{ and } q_0 \in H^{-1/2}(\Gamma_0), \textrm{ find } u \in H^1(D) \textrm{ the weak solution to }\\[5pt]
\begin{array}{lll}
\qquad 
\div\big(\mathbf{K} \nabla{u}\big) = 0~\textrm{ in } D, \quad & 
u = u_0~\textrm{ on } \Gamma_0, \quad & 
\mathbf{n} \cdot \big(\mathbf{K} \nabla{u}\big) = q_0~\textrm{ on } \Gamma_0,
\end{array}\\[5pt]
\textrm{where } \mathbf{n} \textrm{ denotes the outward unit normal at } \Gamma_0. 
\end{array}
\right.
\end{eqnarray}

Note that recovering $u$ is essentially equivalent to finding the unknown Dirichlet data $u_1 \coloneqq u\big|_{\Gamma_1}$, assuming that the trace of $u$ on $\Gamma_1$ has a consistent meaning.
Then, based on the unique continuation property for elliptic operators, or, more precisely, by assuming $\mathbf{K}$ is Lipschitz for $d \geq 3$ and using \cite[Theorem 1.7]{Al09}, it follows there exists at most one weak solution $u$ to problem \eqref{eq:ICP} for the prescribed pair of data $\big( u_0, q_0 \big) \in H^{1/2}(\Gamma_0) \times H^{-1/2}(\Gamma_0)$.
Moreover, it is well-known that reconstructing $u_1$ from the prescribed measurements on $\Gamma_0$ is a severely unstable problem, see \cite{Ha23}.

The inverse problem \eqref{eq:ICP} only stands as a particular continuous version of the following discrete inverse problem of interest that is actually addressed herein, see \Cref{Fig00b}, namely:
\begin{eqnarray} \label{eq:insidemeasurements}
\left|
\begin{array}{l}
\textrm{Assuming that any } \mathbf{x} \in D \textrm{ is regular (see \Cref{section:formulation}), and given } M_0 \textrm{ boundary points } \\
\big( \mathbf{x}_i^0 \big)_{i= \overline{1,M_0}} \subset \Gamma_0 \textrm{ and } M_D \textrm{ internal points } \big( \mathbf{x}_i^D \big)_{i= \overline{1,M_D}} \subset D, \textrm{ find } u \in C(D) \cap H^1_{\rm loc}(D)\\ \textrm{such that}\\[5pt]
\begin{array}{lll}
\qquad 
\div\big(\mathbf{K} \nabla{u}\big) = 0~\textrm{ in } D, \quad & 
u\big( \mathbf{x}^0_i \big) = u^0_i,~~i = \overline{1, M_0}, \quad & 
u\big( \mathbf{x}^D_i \big) = u^D_i,~~i = \overline{1, M_D},
\end{array}\\[5pt]
\textrm{where the partial differential equation above is satisfied in a distributional sense (see }\\  
\textrm{\Cref{def:local solution}), } \mathbf{u}^0 \coloneqq \big( u^0_i \big)_{i = \overline{1, M_0}} \in \mathbb{R}^{M_0} \textrm{ are boundary measurements in the sense of }\\
\textrm{\Cref{def:local solution bvp}, relation \eqref{eq:limit regular point}, and } 
\mathbf{u}^D \coloneqq \big( u^D_i \big)_{i = \overline{1, M_D}} \in \mathbb{R}^{M_D}  
\textrm{are internal measurements.}  
\end{array}
\right.
\end{eqnarray}

Clearly, an unknown Dirichlet data $u_1 \in C(\Gamma_1)$ or $u_1 \in L^2(\Gamma_1)$ associated with problem \eqref{eq:insidemeasurements} cannot be uniquely determined as an element in the infinite dimensional space $C(\Gamma_1)$ or $L^2(\Gamma_1)$ merely from discrete measurements.
However, if the discrete measurement locations $\big( \mathbf{x}^0_i \big)_{i = \overline{1, M_0}}$ and $\big( \mathbf{x}^D_i \big)_{i = \overline{1, M_D}}$ tend to become dense on the boundary of some open subset of $D$, then by a unique continuation principle, the limit problem is expected to have a unique solution. 
A rigorous treatment of this issue, as well as other aspects related to the continuous data version of \eqref{eq:insidemeasurements} are provided in \Cref{section:formulation}, see, e.g., problem \eqref{eq:insidemeasurementsf0fh}.

Furthermore, on the one hand, system \eqref{eq:insidemeasurements} itself is a relevant model for data completion problems, when the measurements are available merely at some discrete locations on the accessible boundary curve/surface of a domain, as well as (or even only) at some discrete positions in the domain, e.g., very close to the boundary. 
On the other hand, one can easily and accurately transform the classical inverse Cauchy problem \eqref{eq:ICP} into problem \eqref{eq:insidemeasurements}, so that the latter can be seen as a (discrete) generalisation of the former.
To justify this, we firstly recall that by the interior H\"older elliptic regularity \cite[Theorem 8.24]{GiTr01}, it follows that any weak solution $u \in H^1(D)$ to problem \eqref{eq:ICP} is, in fact, H\"older continuous in each compact subset of $D$ and, in particular, has a continuous version on $D$ which we shall tacitly consider further. 
Motivated by the H\"older stability estimate for problem \eqref{eq:ICP} due to \cite[Theorem 1.7]{Al09}, it is reasonable to assume that under the Lipschitz regularity of the domain, locally and in the proximity of $\Gamma_0$, where the Cauchy measurements $\big( u_0, q_0 \big)$ are taken, e.g., on some portion $\Gamma_D \subset D$ such that ${\rm dist}(\mathbf{x}, \Gamma_0) \leq h$, $\mathbf{x} \in \Gamma_D$, for some small $h > 0$, one can accurately reconstruct by standard methods the restricted solution $u_D \coloneqq u\big|_{\Gamma_D}$. 
Hence one immediately arrives at the setting given by problem \eqref{eq:insidemeasurements} or its continuous data analogue \eqref{eq:insidemeasurementsf0fh} discussed in \Cref{section:formulation}. 

From a much simpler perspective, under some additional boundary regularity conditions, one can easily reconstruct $u\big|_{\Gamma_D}$ from the Cauchy (Dirichlet and Neumann) data pair $\big( u_0, q_0 \big)$ on $\Gamma_0$ with high accuracy by employing a Taylor expansion from the boundary to the interior of the domain. 
More precisely, consider 
\begin{eqnarray}
\mathbf{x}_0 \in \Gamma_0, \quad 
h > 0: \qquad 
\mathbf{x}_D \coloneqq \mathbf{x}_0 
- h \, \mathbf{K}\big( \mathbf{x}_0 \big) 
\, \mathbf{n}\big( \mathbf{x}_0 \big) \in D
\end{eqnarray}
and assume that both the first- and the second-order derivatives of the solution $u$ to problem \eqref{eq:ICP} exist at any point of the open segment $\big\{t \mathbf{x}_D + (1 - t) \mathbf{x}_0 \: \big| \: t\in (0,1) \big\}$ and can be continuously extended to its closure $\big\{t \mathbf{x}_D + (1 - t) \mathbf{x}_0 \: \big| \: t\in [0,1] \big\}$, and define 
\begin{equation*}
C \coloneqq \sup_{t \in (0,1)} 
\big\langle \mathbf{K}\big( \mathbf{x}_0 \big) 
D^2u\big( t \mathbf{x}_D + (1 - t) \mathbf{x}_0 \big) \mathbf{K}\big( \mathbf{x}_0 \big) 
\mathbf{n}\big( \mathbf{x}_0 \big), 
\mathbf{n}\big( \mathbf{x}_0 \big) \big\rangle.  
\end{equation*}
If the Neumann measurement at $\mathbf{x}_0$ is assumed to be given precisely by
\begin{equation*}
q_0\big( \mathbf{x}_0 \big) = 
\displaystyle \lim\limits_{t \to 0} 
\mathbf{n}\big( \mathbf{x}_0 \big) \cdot \big( \mathbf{K} \nabla{u} \big)\big(t \mathbf{x}_D + (1 - t) \mathbf{x}_0 \big), 
\end{equation*}
then by a straightforward second-order Taylor expansion of $u$ at $\mathbf{x}_0 \in \Gamma_0$, it follows that
\begin{equation*}
\big\vert u\big( \mathbf{x}_D \big) 
- u\big( \mathbf{x}_0 \big) 
- h q_0\big( \mathbf{x}_0 \big) \big\vert 
\leq \dfrac{C}{2} h^2,
\end{equation*}
i.e. one can reconstruct $u\big( \mathbf{x}_D \big)$ approximately as $u_0\big( \mathbf{x}_0 \big) - h \, q_0\big( \mathbf{x}_0 \big)$ with an $\mathcal{O}\big( h^2 \big)$ absolute error.
This procedure can be easily extended to extrapolate $u$ from the Cauchy data $\big( u_0, q_0 \big)$ given at some discrete locations on $\Gamma_0$ to some discrete positions in the domain $D$ located at a distance of maximum $h > 0$ from the aforementioned boundary points on $\Gamma_0$. 
Consequently, yet again, one can accurately transform problem \eqref{eq:ICP} into problem \eqref{eq:insidemeasurements} and hence its continuous data version \eqref{eq:insidemeasurementsf0fh}.

\subsection{Review of numerical methods for the inverse problem} 
\label{ss:review}
For the sake of brevity, we only refer the reader to a few monographs on inverse problems for partial differential equations and, in particular, related to problem \eqref{eq:ICP}, namely  \cite{al12,go02,gr93,is06,ki11,Le21,ta05}. 
The high interest in this type of inverse problems stems, on the one hand, from the fact that they have various applications related to, e.g., heat transfer \cite{al12} and the references therein, solid mechanics, engineering and thermostatics \cite{bo98,andrieux2005data}, crack detection \cite{alessandrini1993stable,AnAb96,BrHaPi01}, corrosion detection \cite{liu2008modified}, latent flatness defect detection during a rolling process \cite{We15}, seismology or geophysics \cite{al19} and the references therein, electroencephalography \cite{fr05,co08,ma20,clerc2007cortical,koshev2020fem}, and electrocardiography \cite{colli1985mathematical,FrMa79}, and on the other hand, from their usually severe ill-posedness which actually results in challenges related to their corresponding numerical approach, i.e. various regularisation techniques need to be employed, investigated and thoroughly analysed.
 
In contrast to problem \eqref{eq:insidemeasurements} which surprisingly has not been investigated so far and, at the same time, represents the main interest of the present study, the literature devoted to numerical methods for the approximate solution to problem \eqref{eq:ICP} is vast. 
Since the above problems \eqref{eq:ICP} and \eqref{eq:insidemeasurements} are numerically equivalent, we further provide the reader with a brief review of the existing approaches to the former. 
On a broader scale, the aforementioned numerical methods may be divided into direct, iterative and Bayesian methods, and these are further discussed for clarity and comparison purposes.

\paragraph{Direct methods.}{Direct methods are usually based on a convenient representation of a direct operator mapping the unknown solution to the prescribed data that is further discretised using, e.g., the finite-difference, boundary element and finite element methods or the method of fundamental solutions. 
Due to the instability of the problem, the resulting discrete system is then regularised, e.g., by the Tikhonov or the Lavrentiev regularisation method, the truncated singular value decomposition (TSVD), or a projection on a certain subspace of possible solutions. 
For some approaches available in the literature, we refer the reader to \cite{ben2012local,belgacem2018analysis} and \cite{ref1tikhonov,ref2engl} in case of the Lavrentiev and the Tikhonov regularisation methods, respectively, which provide a regularising framework for both direct and iterative methods, as well as \cite{berntsson2001numerical,JIN06a,azaiez2011finite,go23,jin2006method,ref4marin,ref5marin,ref7marin} and the references therein which are related to direct methods only. 
An alternative way to regularise the inverse problem \eqref{eq:ICP} is given by using either a certain discretisation scheme which has itself a stabilising effect, see 
\cite{reinhardt1999stability}, or the quasi-reversibility method \cite{Klibanov} which builds upon the ideas introduced by \cite{lattes1969method}.}

\paragraph{Iterative methods.}{In contrast to direct methods, iterative methods produce a sequence of approximate solutions to the inverse problem \eqref{eq:ICP}. This sequence is usually obtained by ({\it i})~the alternating procedure originally introduced in \cite{kozlov1991iterative} and its corresponding extensions and applications to (an)isotropic heat conduction \cite{ref8lesnic,mera2000boundary}, (an)isotropic elasticity \cite{ref9marin,ref11comino} and Helmholtz-type equations \cite{ref12berntsson,ref10marin}, and acceleration versions \cite{ref16Berntsson,ref17bucataru,ref13jourhmane,ref14marin,ref15marin}; 
({\it ii})~a variational approach that transforms the original inverse problem \eqref{eq:ICP} into an equivalent control one and yields either a conjugate gradient \cite{ref18hao,ref19marin,ref20marin}, a steepest descent (Landweber-Fridman) \cite{bucataru2022gradient,bucataru2023stable,ref22marin,ref23marin}, or a minimal error algorithm \cite{ref24johansson,ref25marin,ref26marin}; 
({\it iii})~minimising an energy error functional whose optima or critical points are solutions to the original inverse problem \eqref{eq:ICP} \cite{andrieux2006solving,ref28andrieux,ref29baranger,rischette2013regularization}; 
({\it iv})~reformulating the original inverse problem \eqref{eq:ICP} as a Steklov-Poincar\'{e} one via two direct problems that make use of the prescribed Dirichlet and Neumann data, respectively, \cite{Be05,AzBeEl06}; 
({\it v})~the iterated quasi-reversibility method that consists of determining iteratively the solutions of quasi-reversibility problems depending on the previous ones \cite{darde2015iterated}; 
({\it vi})~the fading regularisation method that reduces the original problem \eqref{eq:ICP} to a sequence of well-posed optimisation ones  \cite{ref32cimetiere,ref34delvare,ref35marin,voinea2021fading}; 
({\it vii})~Nash game strategies to recover the solution to the inverse problem \eqref{eq:ICP} as the Nash equilibrium \cite{habbal2013neumann}; 
({\it viii})~a fixed-point iteration associated with a suitable regularising operator \cite{johansson2004iterative,baravdish2018iterative}; 
and ({\it ix})~other methods \cite{ka95,belgacem2011extended,abouladdotiotach2008missing,caubet2020dual,chakib2006convergence,ben2022full}.
An important feature of these iterative methods is that the regularisation effect is an intrinsic part of the numerical scheme and no further regularisation is, in principle, required for exact data. 
More precisely, the stabilising effect of the iterative methods comes from three main sources: the initialisation, the smoothing effect of the operator that updates the current element of the sequence, and finally, the rule that stops the iterations after a suitable number of steps, provided that the data is noisy/perturbed.
Widely used stopping criteria are based on the discrepancy principle of Morozov, Hansen's $L-$curve method and the generalised cross-validation.}

\paragraph{Bayesian methods.}{The Bayesian approach consists of regarding the unknown solution to \eqref{eq:ICP} or an inverse problem in general as a latent random variable with values in some convenient function space, with a given prior distribution that is aimed to capture the {\it a priori} knowledge of the unknown data, e.g., its regularity or some known bounds. 
Then, the method focuses on deriving the posterior distribution given the available data/measurements with the latter viewed as noisy observations of the unknown solution. 
If required, Monte Carlo Methods may also be employed to sample the posterior distribution. 
Consequently, the Bayesian approach provides one with a a probabilistic framework that incorporates the direct operator inside the distribution of the observations conditioned on the unknown/latent data and restores a distribution of possible solutions. 
The stabilising effect of the Bayesian method on inverse problems is, in principle, entailed by the presence of noise in the observations and the prior distribution imposed on the unknown solution as parts of the model. However, additional regularisation may also be performed.
For some fundamental investigations related to this direction and some applications of this approach to inverse Cauchy-type problems \eqref{eq:ICP}, we refer the reader to \cite{st10,Vaart11,idier2013bayesian} and the references therein, and  \cite{jin2008bayesian,kaipio2011bayesian}, respectively. 

It should be mentioned that the methods developed herein, although probabilistic, are completely different from the aforementioned Bayesian method.
More precisely, the methods proposed in the present study rely upon the probabilistic description of the heat particle inside a domain in order to represent intrinsically the direct operator and are completely different from the Bayesian approach wherein the direct operator is part of an extended probabilistic model.}

\subsection{Challenges, strategy and aims of the present approach}
\label{ss:challenges}
\paragraph{Challenges.}{In general, the inverse Cauchy problem \eqref{eq:ICP} is severely ill-posed \cite{Ha23} and, therefore, this feature is also transmitted to its discrete reformulation \eqref{eq:insidemeasurements}. Under some {\it a priori} bounds on the unknown solution $u$ to the inverse problem \eqref{eq:ICP}, see \cite{Al09}, or by some regularisation procedures, one may expect to reconstruct it from the knowledge of the Cauchy data on $\Gamma_0$, more or less accurately, in a domain $D^\prime \subset D$ with ${\rm dist}(\partial{D^\prime}, \Gamma_1) > 0$ sufficiently large.
However, there are several fundamental issues that need to be addressed carefully when aiming at the numerical reconstruction of the unknown Dirichlet data on $\Gamma_1$: 
\begin{enumerate}[label={\rm (}{\it \alph*}{\rm )}]
\setlength\itemsep{1pt}
\item If the {\it a priori} knowledge on $u$ is imprecise or the regularisation procedure induces unrealistic constraints, then although a corresponding numerical algorithm may return a stable approximate solution, this may still significantly (and stably) deviate from the true solution. 

\item The instability of the inverse problem is not uniform on the inaccessible boundary $\Gamma_1$, but it highly depends on the geometry of the domain. Thus, a customized regularising procedure for the inverse problem should depend on the distribution of the instability on $\Gamma_1$.

\item The assumption that the Cauchy data $\big( u_0, q_0 \big)$ is fully available on the accessible boundary $\Gamma_0$ is rarely satisfied in practice. In fact, in real-life problems, $u_0$ and/or $q_0$ are often available only at some discrete locations on $\Gamma_0$. 
Most of the available numerical methods require an interpolation of the measurements on the entire $\Gamma_0$ and the interpolation error may go beyond a critical threshold value that destroys the often fragile stability obtained by imposing some {\it a priori} bounds or using a regularisation method.

\item Moreover, realistic discrete measurements are almost always affected by noise, which further contributes to the instability of the inverse problem. A critical situation occurs when the Neumann data (the normal heat flux) $q_0$ is not {\it a priori} given as part of the model or directly retrieved from measurements, but obtained, instead, from a finite-difference formula for two close noisy measurements. In this case, the measurement errors are magnified by noisy numerical differentiation and this may, clearly, further minimise the reliability of the reconstructed unknown solution.

\item The geometry of the domain, structure of the coefficients and possible singularities of the solutions also have a crucial impact upon the stability of the inverse problem. 
However, to the best of our knowledge, these profound aspects have remained fundamentally unexplored in the literature so far.
\end{enumerate}
In spite of all these crucial issues, the numerical methods for the inverse problem \eqref{eq:ICP}, developed and tested intensively in the past decades and briefly presented in \Cref{ss:review}, have shown a surprising success at least when applied to certain domains or benchmark solutions. 
As mentioned above, it has still remained open the question of whether and under which conditions these various numerical methods are indeed able to recover, in general, the true solution to problem \eqref{eq:ICP}. 
Moreover, as stated in \cite[Conclusion, pp. 578]{ben2022full}, it is an up-to-date challenge to derive both theoretical and numerical methods that can efficiently treat the inverse problem \eqref{eq:ICP} in a general setting, especially in high dimensions, when the solution or the domain exhibits singularities, and/or in complex geometries.}

\paragraph{Strategy.}{The strategy of the present approach to the inverse problem investigated herein focuses on the interplay between the shape of the domain $D$ and the structure of the diffusion coefficients, but not necessarily on their smoothness, which plays a fundamental role in the stability of the inverse problem \eqref{eq:ICP} and seems, to the best of our knowledge, to have been treated poorly or not at all in the literature. 
To clarify this perspective, we start off from the informal statement that the knowledge of the Cauchy data $\big( u_0, q_0 \big)$ on $\Gamma_0$ is essentially equivalent to that of the solution $u$ in an arbitrary thin shell of $\Gamma_0$, for example, $\Gamma_D \coloneqq \big\{ \mathbf{x} \in D \: \big| \: {\rm dist}(\mathbf{x}, \Gamma_0) \leq \varepsilon \big\} \subset D$ for some $0< \varepsilon \ll 1$ or even on its boundary  $\partial{\Gamma_D}$. 
We further let $u_1 = u\big|_{\Gamma_1}$ be the unknown Dirichlet data, so that $u\big|_{\Gamma_D}$ is represented as the integral of $u\big|_{\partial D} = \big(u_0, u_1 \big)$ on $\partial D$ with respect to the elliptic (harmonic) measure $\mu_\mathbf{x}$, $\mathbf{x} \in \Gamma_D$ (for details, see \Cref{section:formulation}), namely
\begin{equation*}
u(\mathbf{x}) 
\coloneqq \int_{\Gamma_0} u_0(\mathbf{y}) \, \mathrm{d} \mu_\mathbf{x}(\mathbf{y}) 
+ \int_{\Gamma_1} u_1(\mathbf{y}) \, \mathrm{d} \mu_\mathbf{x}(\mathbf{y}), 
\quad \mathbf{x} \in \Gamma_D.
\end{equation*}
Consequently, the contribution of the unknown boundary data $u_1$ to the values of $u$ in $\Gamma_D$ and hence to the values of the Cauchy data $\big( u_0, q_0 \big)$ is transferred through the elliptic measure restricted to $\Gamma_1$ with the poles at $\mathbf{x} \in \Gamma_D$ close to $\Gamma_0$. 
Therefore, the stability of the inverse problem \eqref{eq:ICP} is determined by how $\Gamma_1$ is charged by the elliptic measures $\mu_{\mathbf{x}}$ with the poles in the proximity of $\Gamma_0$.
In probabilistic terms, the stability issue is fundamentally characterised by the {\it distribution} on $\Gamma_1$ of the underlying diffusion process started in the proximity of $\Gamma_0$ at its first exit from $D$.

We emphasize that the elliptic measure $\mu_{\mathbf{x}}$ stands as a very fine tool used in the analysis of elliptic PDEs and Potential Theory, yet there is no explicit formula describing it in general, except for some very particular geometries and diffusion coefficients. 
For example, it is well known that if $D=B(0,1)$ is the unit ball in $\mathbb{R}^d$, $d\geq 2$, and $\mathbf{K}$ is the identity matrix, then $\mu_{\mathbf{x}}$, $\mathbf{x}\in D$, is absolutely continuous with respect to the surface measure $\sigma$ and
\begin{equation*}
    \frac{d\mu_{\mathbf{x}}}{d\sigma}(\mathbf{y})=\sigma(\partial B(0,1))^{-1}\frac{1-\|\mathbf{x}\|^2}{\|\mathbf{x}-\mathbf{y}\|^d}, \quad \mathbf{y}\in \partial B(0,1).
\end{equation*}
However, such a simple formula is not available for general domains; even for an annulus it is well known that the harmonic measure can be represented only as an infinite sum whose terms involve {\it homogeneous harmonic polynomials}. 
Consequently, a key part of our methods concerns the problem of efficiently representing and simulating the elliptic measure for more general domains and possibly anisotropic conductivity coefficients.
}

\paragraph{Numerical example.}{The above strategy is illustrated by considering a two-dimensional isotropic material, i.e. $\mathbf{K} = \mathbf{I}_2$, that occupies a disc perforated by five smaller discs, see \Cref{Fig001}. 
Here, the accessible boundary $\Gamma_0$ is the outer one, i.e. the large circle, hence the inaccessible boundary $\Gamma_1$ consists of the inner ones, namely the five small circles. 
The prescribed Cauchy measurements (Dirichlet and Neumann data) on $\Gamma_0$ may be transformed with a little loss by keeping the Dirichlet data on $\Gamma_0$ and replacing the Neumann data on $\Gamma_0$ with discrete Dirichlet measurements on the dotted circle showed in \Cref{Fig001}. 
The harmonic measure with the pole at a generic location $\mathbf{x}^D$ on the dotted circle can be simulated numerically, see \Cref{s:3} for technical details, and its distribution restricted to $\Gamma_1$ is displayed in \Cref{Fig001a}. Moreover, one can also compute $\mu_{\mathbf{x}^D}(\Gamma_1) \approx 0.0961$ and hence obtain $\mu_{\mathbf{x}^D}(\Gamma_0) \approx 0.9039$. 
The restriction to $\Gamma_1$ of the averaged harmonic measures with the poles at all points represented by the dotted circle is illustrated in \Cref{Fig001b}, whilst the average harmonic measure of $\Gamma_1$ can, yet again, be computed to be approximately $0.1126$ and thus the average harmonic measure of $\Gamma_0$ is obtained to be approximately $0.8874$. 
This example and the corresponding computations illustrate that the contribution of the unknown Dirichlet data on $\Gamma_1$ to the measurements taken on the dotted circle is, in a broad sense, proportional to $0.1126$. 
In other words, a variation (in the sup-norm) of the intensity $\delta$ from the true unknown Dirichlet data on $\Gamma_1$ is reduced (smoothed out) to a variation of at most $\delta \times 0.1126$ of the measurements taken on the dotted circle. Clearly, the smaller the contribution of $\Gamma_1$ to the measurements taken, the higher the instability of the inverse problem. 
It can also be clearly noticed from \Cref{Fig001b} that the contribution of $\Gamma_1$ is not uniform, in the sense that, on average, the portions of the small circles that are closer to the location of the measurements contribute more to the latter than their corresponding remaining parts depicted in \Cref{Fig001b}. This amounts to saying that the reconstruction of the unknown Dirichlet data on $\Gamma_1$ is not uniformly unstable on $\Gamma_1$, but rather less unstable in the red regions and more unstable in the blue ones. 
Note that the red and blue regions in this example may have a totally different configuration, provided that $\mathbf{K}$ is either non-homogeneous or anisotropic. 
{\it Consequently, the interplay between the geometry of the domain and the structure of the conductivity tensor $\mathbf{K}$ has a crucial impact on the instability of the inverse problem investigated herein and this is the main feature the methods proposed in this study aim to capture this accurately.}} 

\paragraph{Aims.}{The present study goes beyond the mere reconstruction of an approximate solution to the inverse problem $\eqref{eq:ICP}$ or, more precisely, to its corresponding discrete version \eqref{eq:insidemeasurements}. Its main aims refer to domains $D \subset \mathbb{R}^d$, $d \geq 2$, subject to mild regularity assumptions, and read as follows:
\begin{enumerate}[label={\rm (}{\it \alph*}{\rm )}]
\setlength\itemsep{1pt}
\item Estimate numerically the density of the elliptic measure on $\Gamma_1$, with prescribed poles inside $D$, e.g., in the proximity of $\Gamma_0$, which would give a valuable insight into the location of the measurements with respect to the inaccessible portion of the boundary $\Gamma_1$.
\item Reconstruct the spectrum of the symmetrised direct operator associated with \eqref{eq:insidemeasurements}. 
\item Quantify numerically the ill-posedness of \eqref{eq:insidemeasurements} by means of the elliptic measure and the spectrum of the aforementioned operator.
\item Reconstruct a spectrum of solutions to problem \eqref{eq:insidemeasurements} by employing the singular value decomposition (SVD) of the corresponding direct operator.
\item Derive a memory efficient meshfree Monte Carlo method that results in a parallel numerical algorithm easily implemented on GPU and is aimed at representing/learning efficiently the direct operator and its spectrally truncated inverse. 
Once the direct/inverse operator is represented, it can be used to solve the corresponding inverse problem for any given set of appropriate measurements.
\end{enumerate}}

\subsection{Structure of the paper} 
\label{ss:structure}
The rest of the paper is structured as follows. 
Section~\ref{section:formulation} is devoted to the rigorous introduction of various notions of solutions to the inverse problem with continuous data \eqref{eq:insidemeasurementsf0fh}, regarded as the continuous counterpart of the original inverse problem with discrete boundary and internal data \eqref{eq:insidemeasurements}, as well as their representation by means of elliptic measures. The uniqueness of the solution to problem \eqref{eq:insidemeasurementsf0fh} is proved (\Cref{prop:uniqueness IP}) and the compactness of the corresponding direct operator defined by \eqref{eq:compact operator} is also shown (\Cref{coro:compact}). 

In \Cref{ss:convergence_spectrum} the spectral structure of the direct operator \eqref{eq:compact operator} is investigated, namely its restriction to a finite set of locations in the domain where the discrete interior measurements are available, see \eqref{eq:Th}. The spectrum of the symmetrised direct operator \eqref{eq:B Th*Th} is analysed (\Cref{coro:B-Lambda}) and a representation for the TSVD solutions to the corresponding operator problem \eqref{eq:TTh} is derived in terms of the elliptic densities (\Cref{coro:representation_u(r)}). \Cref{ss:discret} is concerned with a brief description of the three steps required for approximating the elliptic densities and the related matrix operators. The first step is performed therein by introducing a proper discretisation of the inaccessible boundary $\Gamma_1$ that yields matrix $\boldsymbol{\Lambda}^{\boldsymbol{\nu}}_{\omega^1}$, see \eqref{eq:Lambda_hat_nu}, i.e. a numerical approximation of matrix $\boldsymbol{\Lambda}^{\boldsymbol{\nu}}$ associated with the symmetrised direct operator \eqref{eq:B Th*Th}, whilst the numerical approximation of the $r-$TSVD solution $u^{(r)}$ to the operator equation \eqref{eq:TTh} given by \eqref{eq:u_r} reduces to the $(\omega^1,r)-$TSVD approximate solution $u_{\omega^1}^{(r)}$ given by \eqref{eq:u_omega_M1_r} or \eqref{eq:u_omega_matrix_form}. 
Moreover, both these matrix and solution approximations are constructed from the unknown matrix $\mathbf{A}_{\omega_1} = \left(\mu_{\mathbf{x}_i^D}(\omega^1_j)\right)_{\substack{i=\overline{1,M_D}\\  j=\overline{1,M_1}}} \in \mathbb{R}^{M_D \times M_1}$ defined by relation \eqref{eq:AM matrix components}.

The main aim of \Cref{s:3} is to approach the remaining two steps mentioned in \Cref{ss:discret} and approximate numerically the entries of $\mathbf{A}_{\omega_1}$ based upon the probabilistic representation of the elliptic measure, the approximation via random walk-on-ellipsoids, and the corresponding Monte Carlo estimators for the elliptic densities and the related matrix operators. Such estimators depend fundamentally on two parameters, namely the thickness $\varepsilon$ of the shell of the boundary at which the random walk-on-ellipsoids is stopped and the number $N$ of independent and identically distributed (i.i.d.) samples used to simulate the random functions or random matrices that estimate the quantities of interest, respectively. 
The following {\it main estimators} are derived in \Cref{s:3}: ({\it i})~the random matrix $\boldsymbol{\Lambda}^{\boldsymbol{\nu}}_{\omega^1, \varepsilon, N}$, whose spectrum approximates that of the symmetrised direct operator \eqref{eq:B Th*Th} associated with problem \eqref{eq:insidemeasurements}; ({\it ii})~the random functions $\rho_{\mathbf{x}_i^D, \omega^1, \varepsilon, N}$ that estimates the elliptic density with poles at the measurement locations $\big( \mathbf{x}_i^D \big)_{i = \overline{1,M_D}}$; and ({\it iii})~the random functions $u_{\omega^1, \varepsilon, N}^{(r)}$ that approximate the $r-$TSVD solution to problem \eqref{eq:u_r} and hence to the original problem \eqref{eq:insidemeasurements}.

As a direct consequence of Sections~\ref{ss:convergence_spectrum} and \ref{s:3}, the resulting algorithm is described in \Cref{ss:algorithm} and its accuracy, convergence, stability and efficiency when solving inverse Cauchy problems for isotropic and anisotropic materials in two- and three-dimensional multiply-connected domains with (piecewise) smooth boundaries are thoroughly investigated and tested in Sections~\ref{ss:2Dexamples} and \ref{ss:3Dexamples}, respectively. 

Concluding remarks and further developments of this study are presented in the final \Cref{section:conclusions}, whilst several choices for the boundary weights and the interpolation employed for discretising the inaccessible boundary $\Gamma_1$, see \Cref{ss:discret}, are presented in \Cref{appendix}.


We conclude this section by mentioning that the probabilistic methods introduced and tested in this study are substantiated by a comprehensive convergence and stability analysis systematically developed in \cite{CiGrMaII}. For a unified version of the present paper and the aforementioned one, we also refer the reader to \cite{CiGrMaExtended}.


\section{Inverse Cauchy problem with discrete measurements: Solutions, continuous data reformulation, uniqueness and ill-posedness} 
\label{section:formulation}
This section is devoted to the introduction, in a rigorous manner, of various notions of solutions to the inverse problem with continuous data \eqref{eq:insidemeasurementsf0fh}, which is regarded as the continuous counterpart of the original inverse problem with discrete boundary and internal data \eqref{eq:insidemeasurements}, as well as its analysis.

Let $D \subset \mathbb{R}^d$ be a bounded domain and $\mathbf{K} : D \longrightarrow \mathbb{R}^{d \times d}$ be a bounded measurable, symmetric and strictly elliptic (i.e. positive definite) matrix-valued function. 

\begin{defi}\label{def:local solution}
A function $u \in H^1_{loc}(D)$ is called a local solution of $\div \big(\mathbf{K} \nabla \big)$ in $D$ if 
\begin{equation*}
\int_D \nabla{u}(\mathbf{x}) \cdot \big(\mathbf{K}(\mathbf{x}) \nabla{\varphi}(\mathbf{x})\big) \, \mathrm{d}\mathbf{x} = 0, \quad \forall~\varphi \in C^1_c(D),
\end{equation*}
where $C^1_c(D)$ denotes the set of continuously differentiable functions with a compact support in $D$.
\end{defi}

\begin{thm}[\cite{DeGiorgi57,Nash58}] \label{thm:DeGiorgi Nash}
Any local solution of $\div \big(\mathbf{K} \nabla \big)$ in $D$ admits a H\"older continuous version in any compact subdomain of $D$.
\end{thm}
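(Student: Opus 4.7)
The plan is to follow De Giorgi's original approach, which handles merely bounded measurable coefficients under the ellipticity assumption \ref{Hyp_K} (Nash's route via the fundamental solution and entropy inequalities, or Moser's via the Harnack inequality, would be natural alternatives). The starting point is the observation that, by \Cref{def:local solution} and the symmetry and strict ellipticity of $\mathbf{K}$, for any $\varphi \in C^1_c(D)$ and any constant $k\in\mathbb{R}$, the truncation $(u-k)_+\,\varphi^2$ is an admissible test function after a standard approximation argument. This yields the Caccioppoli inequality
\begin{equation*}
\int_{B_{r}(\mathbf{x}_0)} |\nabla (u-k)_+|^2 \,\mathrm{d}\mathbf{x} \leq \frac{C}{(R-r)^2}\int_{B_R(\mathbf{x}_0)} (u-k)_+^2 \,\mathrm{d}\mathbf{x}
\end{equation*}
for every $k\in\mathbb{R}$ and concentric balls $B_r\subset B_R\Subset D$, with $C$ depending only on the ellipticity constants of $\mathbf{K}$.

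Equipped with Caccioppoli, the first main step is local boundedness. Combining the Caccioppoli inequality on a nested sequence of shrinking balls $B_{r_n}$ with $r_n\downarrow r/2$ and levels $k_n\uparrow k$, together with the Sobolev embedding $H^1\hookrightarrow L^{2^*}$, one obtains a nonlinear recursive inequality for $a_n := \int_{B_{r_n}} (u-k_n)_+^2\,\mathrm{d}\mathbf{x}$ of the form $a_{n+1}\leq C\,b^n\,a_n^{1+\alpha}$ for some $b>1$ and $\alpha>0$. The classical De Giorgi iteration lemma then forces $a_n\to 0$ provided the initial quantity is small enough, which after scaling yields an $L^\infty$-bound on $u^\pm$ in $B_{r/2}$ in terms of its $L^2$-norm on $B_r$.

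The second and most delicate step is the oscillation decay: for every ball $B_R\Subset D$, one shows $\mathrm{osc}_{B_{R/2}}u \leq \gamma\,\mathrm{osc}_{B_R}u$ for some $\gamma\in(0,1)$ depending only on $d$ and the ellipticity constants. This rests on De Giorgi's ``intermediate value'' (or density) lemma: if $|\{u\leq k\}\cap B_R|\geq \tfrac{1}{2}|B_R|$, then on the smaller ball $B_{R/2}$ the function $u$ must actually lie strictly below some intermediate level on a quantifiable portion, and iterating this forces an improvement of the supremum. The isoperimetric-type inequality bridging the measure of the intermediate level set with the gradient integral (essentially the De Giorgi--Poincar\'e inequality) is the crux of the argument and constitutes the main obstacle: it is what genuinely requires the ellipticity of $\mathbf{K}$ and cannot be obtained from Caccioppoli alone.

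Once the oscillation decay is established, iterating it on dyadically shrinking balls yields $\mathrm{osc}_{B_r(\mathbf{x}_0)}u \leq C\,r^{\alpha}\,\mathrm{osc}_{B_{R_0}(\mathbf{x}_0)}u$ for all $r\in(0,R_0)$, with $\alpha=-\log_2\gamma>0$, so that $u$ admits a $C^{0,\alpha}$ representative on any compact $K\Subset D$ with estimates depending only on $d$, the ellipticity bounds, $\mathrm{dist}(K,\partial D)$, and $\|u\|_{L^2(D)}$. A covering argument over $K$ then delivers the global H\"older continuity on $K$, completing the proof.
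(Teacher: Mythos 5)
Your outline is essentially the classical De Giorgi argument, and it is the right one for this statement; note, however, that the paper does not prove this theorem at all — it is quoted as the De Giorgi--Nash theorem with citations to \cite{DeGiorgi57,Nash58} and the remark that Moser's paper \cite{Moser60} contains a simpler proof, so there is no in-paper argument to compare against beyond the choice of reference. Your sketch (Caccioppoli inequality for the truncations $(u-k)_\pm$, De Giorgi iteration giving local $L^\infty$-bounds, the density/``intermediate level'' lemma giving oscillation decay $\mathrm{osc}_{B_{R/2}}u\le\gamma\,\mathrm{osc}_{B_R}u$, and dyadic iteration giving $C^{0,\alpha}$ on compacts) reproduces that classical proof correctly in structure, which is more than the paper offers. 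One small correction of emphasis: the isoperimetric-type (De Giorgi--Poincar\'e) inequality linking the measures of the level sets $\{u\le k_1\}$, $\{u\ge k_2\}$ and the gradient on the intermediate set is a purely real-analytic fact about $H^1$ functions and does not itself use the ellipticity of $\mathbf{K}$; the ellipticity (and symmetry/boundedness from \ref{Hyp_K}) enters only through the Caccioppoli estimate, and the two ingredients are combined in the measure-decay step. Also bear in mind that, at the level of detail given, the recursive iteration lemma and the measure-theoretic lemma are asserted rather than proved, so your text is a faithful roadmap of \cite{DeGiorgi57} (or of Moser's streamlined version) rather than a self-contained proof — which is consistent with how the paper itself treats the result.
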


Note that a simpler proof of this result due to De Giorgi \cite{DeGiorgi57} and Nash \cite{Nash58} can be found in Moser \cite{Moser60}. Consequently, according to Theorem~\ref{thm:DeGiorgi Nash}, any local solution $u$ of $\div \big(\mathbf{K} \nabla \big)$ in $D$ will automatically be considered continuous on $D$.

\begin{defi}\label{def:local solution bvp}
A function $u \in H^1_{loc}(D)$ is called a solution of the Dirichlet problem
\begin{equation}\label{eq:dirichlet bvp}
    \div \big(\mathbf{K} \nabla{u}\big) = 0  \mbox{ in } D, \quad
    u = f  \mbox{ on } \partial D,
\end{equation}
where $f \in C(\partial D)$, if $u$ is a local solution of $\div \big(\mathbf{K} \nabla \big)$ in $D$ and 
\begin{equation}\label{eq:limit regular point}
\lim_{\substack{\mathbf{x} \to \mathbf{y} \\ \mathbf{x} \in D}} u (\mathbf{x}) = f(\mathbf{y}), 
\quad \forall\; \mathbf{y}\in \partial D \mbox{ regular point}.
\end{equation}
\end{defi}

For a precise definition of a regular boundary point $\mathbf{y} \in \partial D$, for a strictly elliptic operator in divergence form in $D$, with merely bounded measurable coefficients and a characterisation of a regular boundary point via barrier functions, we refer the reader to Stampacchia \cite[Definition 3.2 and Lemma 3.2]{Stampacchia63}, respectively. Littman et al. \cite[Corollary 9.1]{Stampacchia63} state that a boundary point $\mathbf{y} \in \partial D$ is regular for a strictly elliptic operator (in divergence form) in $D$ with merely bounded measurable coefficients iff $\mathbf{y} \in \partial D$ is regular for the Laplace operator previously characterised entirely by Wiener \cite{Wiener24}. Wiener's criterion is generally difficult to check in practice since it requires the computation of the capacity of sets. However, geometric assumptions on $\partial D$ that ensure that every boundary point is regular (e.g., for any domain $D$ satisfying the exterior cone condition) are provided by Gilbarg and Trudinger \cite[Sections 2.8 and 2.9]{GiTr01}.


\begin{thm} \cite[Theorem 1.1]{ChenZhao95} \label{thm:uniquenss local solution bvp}
Suppose that $D \subset \mathbb{R}^d$ is a bounded domain and $f \in C(\partial D)$. 
Then there exists a unique solution to the Dirichlet problem \eqref{eq:dirichlet bvp} in the sense of \Cref{def:local solution bvp}. 
Moreover, for any $\mathbf{x} \in D$, there exists a (unique) probability measure $\mu_\mathbf{x}$ on $\partial D$ such that 
\begin{equation}\label{eq:representation_C}
    u(\mathbf{x}) = 
    \int_{\partial D} f(\mathbf{y}) \, \mathrm{d} \mu_{\mathbf{x}}(\mathbf{y}), 
    \quad \forall~f \in C(\partial D),
\end{equation}
where $u\in H^1_{loc}(D) \cap C(D)$ is the corresponding solution to the Dirichlet problem \eqref{eq:dirichlet bvp}.
\end{thm}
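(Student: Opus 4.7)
The plan is to construct the solution probabilistically, which fits naturally with the paper's approach and directly produces the representing measure $\mu_{\mathbf{x}}$. Associated with the symmetric strictly elliptic operator $\mathcal{L} = \tfrac{1}{2}\div(\mathbf{K}\nabla)$ there exists a continuous conservative diffusion $(X_t)_{t\geq 0}$ on $\mathbb{R}^d$ with laws $\mathbb{P}^{\mathbf{x}}$, constructed from the Dirichlet form $\mathcal{E}(u,v)=\tfrac{1}{2}\int \nabla u\cdot \mathbf{K}\nabla v\,\mathrm{d}\mathbf{x}$ or, equivalently, via the martingale problem for $\mathcal{L}$. Since $D$ is bounded and $\mathbf{K}$ is strictly elliptic, the first exit time $\tau_D := \inf\{t\geq 0 : X_t \notin D\}$ satisfies $\mathbb{E}^{\mathbf{x}}[\tau_D]<\infty$ for every $\mathbf{x}\in D$.

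I would then define, for each $\mathbf{x}\in D$, the Borel probability measure on $\partial D$ by
\begin{equation*}
\mu_{\mathbf{x}}(A) := \mathbb{P}^{\mathbf{x}}(X_{\tau_D}\in A), \qquad A \subset \partial D \ \textrm{Borel},
\end{equation*}
and set $u(\mathbf{x}) := \int_{\partial D} f(\mathbf{y})\,\mathrm{d}\mu_{\mathbf{x}}(\mathbf{y})$. Boundedness of $f$ together with the strong Markov property at the first exit of any ball $\overline{B(\mathbf{x}_0,r)}\subset D$ yields the averaging identity
\begin{equation*}
u(\mathbf{x}_0) = \int_{\partial B(\mathbf{x}_0,r)} u(\mathbf{y})\,\mathrm{d}\mu_{\mathbf{x}_0}^{B(\mathbf{x}_0,r)}(\mathbf{y}),
\end{equation*}
where $\mu^{B}$ denotes the elliptic measure of the ball. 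From this, together with a local comparison to the unique $H^1\bigl(B(\mathbf{x}_0,r)\bigr)$ weak solution having boundary trace $u|_{\partial B(\mathbf{x}_0,r)}$ (which exists by Lax--Milgram because the boundary data is continuous), I would deduce that $u \in H^1_{\mathrm{loc}}(D)$ and that $u$ is a local solution in the sense of \Cref{def:local solution}; \Cref{thm:DeGiorgi Nash} then provides the H\"older continuous version inside $D$.

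The step I expect to be the main obstacle is the boundary behaviour \eqref{eq:limit regular point}. Here I would invoke the probabilistic characterisation of regularity: a point $\mathbf{y}\in\partial D$ is regular iff $\mathbb{P}^{\mathbf{y}}(\tau_D=0)=1$, which by the Littman--Stampacchia--Weinberger result cited just before the statement coincides with the classical barrier regularity and is therefore independent of $\mathbf{K}$. Combined with the quasi-left continuity of $X$ and the continuity of $f$, this yields $\mathbb{E}^{\mathbf{x}}[f(X_{\tau_D})]\to f(\mathbf{y})$ as $\mathbf{x}\to\mathbf{y}$ in $D$ for every regular $\mathbf{y}$. Finally, uniqueness of $u$, and consequently of $\mu_{\mathbf{x}}$ via the Riesz representation theorem applied to the positive linear functional $f\mapsto u(\mathbf{x})$ on $C(\partial D)$, follows from the weak maximum principle: if $w$ is the difference of two solutions, then $w$ is a bounded local solution whose probabilistic representation gives $w(\mathbf{x})=\mathbb{E}^{\mathbf{x}}[w(X_{\tau_D})]=0$, since $X$ exits $D$ through regular points with probability one and $w$ vanishes at all such points.
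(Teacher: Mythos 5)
A point of calibration first: the paper does not prove this theorem at all --- it is imported verbatim from Chen and Zhao, and your construction (the exit law of the diffusion generated by $\tfrac12\div(\mathbf{K}\nabla)$ serving as the elliptic measure) is exactly the mechanism behind that citation, as \Cref{rem: other construction} records. So your overall route is the right one and coincides with the source rather than deviating from anything in the paper; note only that for merely bounded measurable $\mathbf{K}$ the martingale problem need not be well posed, so the Dirichlet-form construction you mention first is the one to rely on.

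That said, two steps as written have genuine gaps. First, the claim that the $H^1\big(B(\mathbf{x}_0,r)\big)$ weak solution with boundary trace $u|_{\partial B(\mathbf{x}_0,r)}$ ``exists by Lax--Milgram because the boundary data is continuous'' is not correct: Lax--Milgram needs an $H^1$ extension of the data, i.e. a trace in $H^{1/2}(\partial B)$, and a merely continuous boundary function need not admit one. The standard repair is to approximate the data uniformly by Lipschitz functions, solve, and pass to the limit using the maximum principle together with Caccioppoli/De Giorgi--Nash estimates; alternatively, bypass the comparison and identify the probabilistically defined $u$ as a local weak solution via the Fukushima decomposition (so that $u\big(X_{t\wedge\tau_{D'}}\big)$ is a martingale for $D'\Subset D$). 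Either way, the identification requires more than the strong Markov averaging identity, since that identity involves the (equally unknown) elliptic measure of the ball. Second, in the uniqueness step you declare the difference $w$ of two solutions to be a \emph{bounded} local solution and then use $w(\mathbf{x})=\mathbb{E}^{\mathbf{x}}\big[w\big(X_{\tau_D}\big)\big]$. Neither assertion is free: \Cref{def:local solution bvp} imposes no boundedness (only limits at regular points), and the representation of a general local solution must itself be proved --- exhaust $D$ by subdomains, use the martingale property on each, quasi-left continuity, the Kellogg-type fact that $X_{\tau_D}$ lies in the set of regular points a.s., and a boundedness/uniform-integrability control to pass to the limit. Without some such restriction the conclusion is genuinely false for the definition as stated: for $D=B(\mathbf{0},1)\setminus\{\mathbf{0}\}$ and $\mathbf{K}=\mathbf{I}_2$, the function $w(\mathbf{x})=\log\Vert\mathbf{x}\Vert$ is a nonzero local solution in $H^1_{loc}(D)\cap C(D)$ with zero limits at every regular boundary point, since $\{\mathbf{0}\}$ is irregular. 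The boundedness hypothesis present in the precise formulation of Chen--Zhao is therefore essential and must be stated and used, not assumed tacitly.
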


\begin{defi}\label{def:elliptic_measure}
In the framework of \Cref{thm:uniquenss local solution bvp}, for each $\mathbf{x} \in D$, the probability measure $\mu_\mathbf{x}$ is called the elliptic measure associated with $\div \big(\mathbf{K} \nabla \big)$ in $D$ with the pole $\mathbf{x} \in D$. 
In the particular case of the Laplace operator $\Delta$, $\mu_\mathbf{x}$ is referred to as the harmonic measure.
\end{defi}

\begin{rem}\label{rem: other construction}
We emphasise that the existence of the elliptic measure $\mu_\mathbf{x}$ with the pole $\mathbf{x} \in D$ is actually entailed by the probabilistic representation of solutions obtained in \cite[Theorem 1.1]{ChenZhao95}, where $\mu_\mathbf{x}$ is given by the hitting distribution on $\partial D$ of the underlying diffusion process started from $\mathbf{x} \in D$.

There is another classical construction of the elliptic measure, see \cite{Dahlberg86}, usually obtained by assuming that the Dirichlet problem for $\div \big(\mathbf{K} \nabla \big)$ with continuous boundary data is solvable in $D$, in the sense that for any $f \in C(\partial D)$, there exists a local solution $u \in H^1_{loc}(D)$ of $\div \big(\mathbf{K} \nabla{u}\big) = 0$ in $D$ that is continuous in $\overline{D}$ and satisfies $u|_{\partial D} = f$.
In this case, the existence of the elliptic measure can be obtained by the maximum principle and the Riesz representation theorem. 
\end{rem}

In fact, the right-hand side of relation \eqref{eq:representation_C} can be used to define a larger class of solutions even if $\partial D$ contains non-regular points, see \Cref{prop:local solution Lq'} below.

\begin{prop}\label{prop:local solution Lq'}
The following assertions hold.
\begin{enumerate}[label={\rm (}\roman*{\rm )}]
    \item Any two measures $\mu_\mathbf{x}$ and $\mu_\mathbf{y}$, where $\mathbf{x}, \mathbf{y} \in D$ and $\mathbf{x} \neq \mathbf{y}$, are mutually absolutely continuous.
    \item Assume that there exists $\mathbf{x}_0 \in D$ such that $f\in L^1(\mu_{\mathbf{x}_0})$. Then $f\in L^1(\mu_\mathbf{x})$, $\forall~\mathbf{x} \in D$, and
    \begin{equation}\label{eq:L1-representation}
    u(\mathbf{x}) \coloneqq \int_{\partial D} f(\mathbf{y}) \, \mathrm{d} \mu_\mathbf{x}(\mathbf{y}), 
    \quad \mathbf{x} \in D,
    \end{equation}
    is a local solution of $\div \big(\mathbf{K} \nabla{u}\big) = 0$ in $D$ and, at the same time, is locally H\"older continuous.
\end{enumerate}
\end{prop}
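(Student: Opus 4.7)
The plan is to deduce both claims from Moser's Harnack inequality for non-negative local solutions of $\div\big(\mathbf{K}\nabla\cdot\big)$ in $D$, combined with the integral representation \eqref{eq:representation_C} from \Cref{thm:uniquenss local solution bvp}.

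For part (i), given two distinct points $\mathbf{x}, \mathbf{y} \in D$, I would chain the classical Harnack inequality along a finite sequence of overlapping balls compactly contained in $D$ that connects $\mathbf{x}$ to $\mathbf{y}$, obtaining a constant $C = C(\mathbf{x}, \mathbf{y}) > 0$ such that $v(\mathbf{x}) \leq C v(\mathbf{y})$ for every non-negative local solution $v$ of $\div\big(\mathbf{K}\nabla v\big) = 0$ in $D$. Applied to $v(\mathbf{z}) \coloneqq \int_{\partial D} f \, \mathrm{d}\mu_{\mathbf{z}}$ with $f \in C(\partial D)$ and $f \geq 0$, this yields $\int f \, \mathrm{d}\mu_{\mathbf{x}} \leq C \int f \, \mathrm{d}\mu_{\mathbf{y}}$. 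A standard monotone class argument combined with outer regularity of finite Borel measures on the compact set $\partial D$ extends this bound to indicators, so $\mu_{\mathbf{x}}(B) \leq C \, \mu_{\mathbf{y}}(B)$ for every Borel $B \subset \partial D$; swapping the roles of $\mathbf{x}$ and $\mathbf{y}$ yields the reverse inequality and hence mutual absolute continuity.

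The integrability statement in (ii) is then immediate: the two-sided Harnack bound shows that the Radon-Nikodym derivative $\mathrm{d}\mu_{\mathbf{x}}/\mathrm{d}\mu_{\mathbf{x}_0}$ is essentially bounded by some constant $C(\mathbf{x}, \mathbf{x}_0) > 0$, so $f \in L^1(\mu_{\mathbf{x}_0})$ forces $\int |f| \, \mathrm{d}\mu_{\mathbf{x}} \leq C(\mathbf{x}, \mathbf{x}_0) \int |f| \, \mathrm{d}\mu_{\mathbf{x}_0} < \infty$ for every $\mathbf{x} \in D$.

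The substantive work is showing that the function $u$ defined by \eqref{eq:L1-representation} is a local solution. My plan is to split $f = f^+ - f^-$ and approximate each non-negative part from below by continuous $f_n^\pm \in C(\partial D)$ with $0 \leq f_n^\pm \uparrow f^\pm$ pointwise $\mu_{\mathbf{x}_0}$-a.e. (using Lusin's theorem together with a truncation); by \Cref{thm:uniquenss local solution bvp} each $u_n^\pm(\mathbf{x}) \coloneqq \int_{\partial D} f_n^\pm \, \mathrm{d}\mu_{\mathbf{x}}$ is a non-negative local solution, and monotone convergence together with the bound from (i) yields $u_n^\pm \uparrow u^\pm$ everywhere on $D$ with $u^\pm$ locally bounded. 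The main obstacle is passing to the limit inside the weak formulation of \Cref{def:local solution}: I would invoke the Caccioppoli inequality to bound $u_n^\pm$ uniformly in $H^1(K')$ on every compact $K' \subset D$, extract a weak $H^1_{\rm loc}$-limit which must coincide with $u^\pm$ by the pointwise monotone convergence, and then pass to the limit in the test-function identity to conclude that $u^\pm$, and hence $u = u^+ - u^-$, is a local solution of $\div\big(\mathbf{K}\nabla u\big) = 0$ in $D$. Local H\"older continuity is then automatic from \Cref{thm:DeGiorgi Nash}.
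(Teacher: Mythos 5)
Your overall architecture is the same as the paper's (Harnack inequality applied to the solutions produced by \Cref{thm:uniquenss local solution bvp}, local energy bounds plus weak $H^1_{\rm loc}$ compactness to pass to the limit in the weak formulation, De Giorgi--Nash for regularity), and parts (i) and the integrability claim in (ii) are sound --- in fact your chained-Harnack argument gives the stronger two-sided comparability $\mu_{\mathbf{x}}\leq C\,\mu_{\mathbf{y}}$, which the paper does not state. The genuine gap is the approximation step on which the rest of (ii) is built: you cannot, in general, find continuous $f_n^{\pm}\in C(\partial D)$ with $0\leq f_n^{\pm}\uparrow f^{\pm}$ pointwise $\mu_{\mathbf{x}_0}$-a.e. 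If such a sequence existed, then $g\coloneqq\sup_n f_n^{\pm}$ would be lower semicontinuous and equal to $f^{\pm}$ $\mu_{\mathbf{x}_0}$-a.e., so the scheme only reaches functions that agree a.e.\ with an l.s.c.\ function. Take, e.g., $D$ the unit disc (harmonic measure mutually absolutely continuous with arclength) and $f=\mathbf{1}_A$ with $A\subset\partial D$ a fat-Cantor-type set: closed, nowhere dense in $\partial D$, of positive measure. If $g$ is l.s.c.\ with $g=\mathbf{1}_A$ off a null set $N$, then the relatively open set $O=\{g>1/2\}$ satisfies $A\setminus N\subset O\subset A\cup N$; since $O\cap A^{c}$ is open and null it is empty, so $O\subset A$, forcing $O=\varnothing$ because $A$ has empty interior --- contradicting $\mu(A\setminus N)>0$. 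Lusin's theorem gives continuous functions agreeing with $f$ off small sets, but it cannot supply the monotonicity your monotone-convergence passage ($u_n^{\pm}\uparrow u^{\pm}$ everywhere) requires, so the argument as designed fails for general $f\in L^1(\mu_{\mathbf{x}_0})$.

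The repair is exactly what the paper does: first truncate, $f_n\coloneqq f\wedge n$ (monotone only in the truncation level), then approximate each $f_n$ by a \emph{uniformly bounded} sequence $(f_{n,k})_k\subset C(\partial D)$ converging to $f_n$ merely a.e., and use dominated convergence in $k$ and monotone convergence in $n$; the Harnack bound $\sup_{D'}u_{n,k}\leq C\,u_{n,k}(\mathbf{x}_0)$ then gives the locally uniform bounds feeding the Caccioppoli/weak-compactness step, just as in your plan. One further point to tighten: concluding local H\"older continuity ``automatically'' from \Cref{thm:DeGiorgi Nash} only yields a H\"older continuous \emph{version} of the $H^1_{\rm loc}$ limit; to conclude that the pointwise-defined $u(\mathbf{x})=\int_{\partial D}f\,\mathrm{d}\mu_{\mathbf{x}}$ is itself locally H\"older you should argue, as the paper does, via the uniform local H\"older estimates for the approximants $u_{n,k}$ (from the interior estimates together with the uniform local sup bounds) and their pointwise convergence to $u$.
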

\begin{proof}~\linebreak
\noindent ({\it i})~Let $\mathbf{x}, \mathbf{y} \in D$, $\mathbf{x} \neq \mathbf{y}$ and $A \subset \partial D$ be measurable such that $\mu_\mathbf{x}(A) = 0$. Since $\mu_\mathbf{x}$ is a Borel measure, there exists a decreasing sequence of open sets $A \subset A_n \subset \partial D$, $n \ge 1$, such that $\mu_\mathbf{x}(A_n) \leq 1/n$, $n \geq 1$. 
For any $n \geq 1$, there exists an increasing sequence of functions $\big(h_{n,k}\big)_{k \geq 1} \subset C(\partial D)$ such that $1_{A_n} = \sup\limits_{k \geq 1} h_{n,k}$.
Then, by \Cref{thm:uniquenss local solution bvp}, it follows that
\begin{equation*}
u_{n,k}(\mathbf{z}) \coloneqq \int_{\partial D} h_{n,k}(\mathbf{y}) \, \mathrm{d} \mu_\mathbf{z}(\mathbf{y}), 
\quad \mathbf{z} \in D, \quad n, k \ge 1,
\end{equation*}
is a solution to the Dirichlet problem \eqref{eq:dirichlet bvp} with $f = h_{n,k}$, $n, k \geq 1$. 
Consequently, by the Harnack inequality \cite[Corollary~8.21]{GiTr01}, 
\begin{equation*} 
    \exists~C > 0,~\textrm{independent of $n$ and $k$:} \quad 
    u_{n,k}(y) \leq C u_{n,k}(x), \quad n, k \geq 1.
\end{equation*}
By letting $k \to \infty$, it follows that $\mu_\mathbf{y}(A_n) \leq 1/n$, $n \geq 1$, and hence $\mu_\mathbf{y}(A) = 0$.

\bigskip

\noindent ({\it ii})~By linearity, one may assume that $f \geq 0$. 
We further set $f_n \coloneqq f \wedge n$ so that $f_n\in L^1(\mu_\mathbf{x})$, for all $\mathbf{x} \in D$ and $n\geq 1$. 
For each $n\geq 1$, let $\big(f_{n,k}\big)_{k\geq 1} \subset C(\partial D)$ be uniformly bounded such that $\lim\limits_{k \to \infty} f_{n,k} = f_n$ a.e. with respect to $\mu_\mathbf{x}$, for a given $\mathbf{x} \in D$ (and hence for all $\mathbf{x} \in D$), and set 
\begin{equation*}
u_{n,k} (\mathbf{x}) \coloneqq  \int_{\partial D} f_{n,k}(\mathbf{y}) \, \mathrm{d} \mu_\mathbf{x}(\mathbf{y}), 
\quad \mathbf{x} \in D, \quad n, k \geq 1.
\end{equation*}
Yet again, by \Cref{thm:uniquenss local solution bvp}, it follows that $u_{n,k}$ is a solution to the Dirichlet problem \eqref{eq:dirichlet bvp} with $f = f_{n,k}$, $n, k \geq 1$. 
Employing the Harnack inequality \cite[Corollary~8.21]{GiTr01} yields that for any compact set $D^\prime \subset D$ with $\mathbf{x}_0 \in D^\prime$, there exists a constant $C > 0$ such that
\begin{equation}\label{eq:sup_u_nk}
\sup\limits_{\mathbf{x} \in D^\prime} u_{n,k}(\mathbf{x}) \leq C u_{n,k}(\mathbf{x}_0), 
\quad n, k \geq 1.
\end{equation}
Consequently,
\begin{equation*}
u(\mathbf{x}) 
= \sup\limits_{n \ge 1} \int_{\partial D} f_n(\mathbf{y}) \, \mathrm{d} \mu_\mathbf{x}(\mathbf{y}) 
= \sup\limits_{n \ge 1} \lim\limits_{k \to \infty} \int_{\partial D} f_{n,k}(\mathbf{y}) \, \mathrm{d} \mu_\mathbf{x}(\mathbf{y}) 
\leq C u(\mathbf{x}_0), 
\quad \mathbf{x} \in D^\prime.
\end{equation*}
Since $D^\prime$ was arbitrarily chosen, it follows that $f \in L^1(\mu_\mathbf{x})$, $\forall~\mathbf{x} \in D$.

Moreover, by setting $u_{n,k}(\mathbf{x}) \coloneqq \displaystyle \int_{\partial D} f_{n,k}(\mathbf{y}) \, \mathrm{d} \mu_\mathbf{x}(\mathbf{y})$, $\mathbf{x} \in D$, $n, k \ge 1$, one obtains that the sequence of solutions $\big(u_{n,k}\big)_{n, k \ge 1}$ is locally uniformly bounded in $D$, hence by \cite[Lemma]{GiTr01}, it is uniformly bounded in $H^1(D^\prime)$ for any compact set $D^\prime \subset D$. 
By passing to a subsequence which is weakly convergent in $H^1(D^\prime)$ and using the dominated and monotone convergence, one eventually obtains that $u$ is a local solution in the sense of \Cref{def:local solution}. 

We finally prove that $u$ is H\"older continuous. 
From relation \eqref{eq:sup_u_nk} and \cite[Theorem 8.24]{GiTr01}, it follows that $u_{n,k}$ is H\"older continuous locally in $D$, with the H\"older constant uniformly bounded with respect to $n, k \geq 1$. The claim now follows since $\lim\limits_{n \to \infty} \lim\limits_{k \to \infty} u_{n,k} = u$ pointwise in $D$. 

\end{proof}

Consider now a compact set $\Gamma_D \subset D$, where the internal measurements are available, such that 
\begin{enumerate}[label=({\bf H.3})]
\item \label{eq:I} $\exists~\varnothing \neq I \subset D$ an open set with $\partial I \subset \Gamma_D \cup \Gamma_0$.   
\end{enumerate}
We are now in the position to state and prove the uniqueness result concerning the solutions from $H^1_{loc}(D) \cap C(D)$ to the following inverse problem
\begin{equation} \label{eq:insidemeasurementsf0fh}
    \div \big(\mathbf{K} \nabla{u}\big) = 0 \mbox{ in } D, \quad 
    u = u_0 \mbox{ on } \Gamma_0 \subset \partial D, \quad
    u = u_D \mbox{ on } \Gamma_D \subset D,
\end{equation}
where $u_0 \in C(\Gamma_0)$ and $u_D \in C(\Gamma_D)$ are prescribed, whilst the Dirichlet condition on $\Gamma_0$ is understood in the sense of \Cref{def:local solution bvp}, see relation \eqref{eq:limit regular point}.

\begin{prop}[Uniqueness of solutions to the inverse problem] \label{prop:uniqueness IP} 
Suppose that $D \subset \mathbb{R}^d$ is a bounded domain and $\mathbf{K} : D \longrightarrow \mathbb{R}^{d \times d}$ is a bounded measurable, symmetric and strictly elliptic (positive definite) matrix-valued function. In addition, if $d \geq 3$, assume that 
\begin{equation*}
\exists~L >0: \quad 
\vert K_{ij}(\mathbf{x}) - K_{ij}(\mathbf{y}) \vert \leq L \vert \mathbf{x} - \mathbf{y} \vert, 
\quad \forall~\mathbf{x}, \mathbf{y} \in D, 
\quad i, j = \overline{1,d}.
\end{equation*}
Let $\Gamma_D \subset D$ be a compact set, $\varnothing \neq I \subset D$ be an open set such that hypothesis {\rm \ref{eq:I}} is satisfied, $u_0 \in C(\Gamma_0)$ and $u_D \in C(\Gamma_D)$.\\
Then there exists at most one solution $u \in H^1_{loc}(D) \cap C(D)$ to the inverse problem \eqref{eq:insidemeasurementsf0fh}.
\end{prop}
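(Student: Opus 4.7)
Suppose $u_1, u_2 \in H^1_{loc}(D) \cap C(D)$ are two solutions to \eqref{eq:insidemeasurementsf0fh} and set $w := u_1 - u_2$. Then $w$ is a local solution of $\div(\mathbf{K}\nabla w) = 0$ in $D$ (in the sense of \Cref{def:local solution}), it vanishes identically on $\Gamma_D$, and, at every regular boundary point $\mathbf{y} \in \Gamma_0$, satisfies $\lim_{\mathbf{x} \to \mathbf{y},\; \mathbf{x} \in D} w(\mathbf{x}) = 0$. The plan is to deduce $w \equiv 0$ in two stages: first show that $w$ vanishes on the open set $I$ provided by hypothesis \ref{eq:I}, then extend this to all of $D$ by a strong unique continuation principle.

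For the first stage, the key observation is that $\partial I \subset \Gamma_D \cup \Gamma_0$, so the boundary values of $w$ on $\partial I$ are controlled: on $\partial I \cap \Gamma_D$ they are zero pointwise (since $\Gamma_D \subset D$ and $w \in C(D)$), and on $\partial I \cap \Gamma_0$ they tend to zero at every regular boundary point of $D$. The irregular points of $\partial D$ that may lie on $\partial I \cap \Gamma_0$ form a polar set by Wiener's criterion (and the Littman--Stampacchia--Weinberger equivalence of regular points for $\div(\mathbf{K}\nabla\cdot)$ and for $\Delta$), hence have zero elliptic measure $\mu^I_\mathbf{x}$ for every $\mathbf{x} \in I$. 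I would then apply either the probabilistic representation from \Cref{thm:uniquenss local solution bvp} (the distribution of the underlying diffusion hitting $\partial I$ when started from $\mathbf{x}\in I$), or, equivalently, an exhaustion argument: take compactly contained open sets $I_n$ with $\overline{I_n} \subset I$ and $I_n \nearrow I$, on which $w$ is continuous and bounded, use the weak maximum principle to get $\sup_{I_n}|w| \le \sup_{\partial I_n}|w|$, and pass to the limit, concluding $w(\mathbf{x}) = \int_{\partial I} w(\mathbf{y})\, \mathrm{d}\mu^I_\mathbf{x}(\mathbf{y}) = 0$ for every $\mathbf{x} \in I$.

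For the second stage, once $w$ vanishes on the nonempty open set $I \subset D$, I would invoke the strong unique continuation principle for second-order elliptic operators in divergence form, specifically \cite[Theorem 1.7]{Al09}: under the hypotheses of the proposition (bounded measurable $\mathbf{K}$ in dimension $d=2$, Lipschitz $\mathbf{K}$ in $d \geq 3$), a local solution that vanishes on an open subset of the connected domain $D$ must vanish everywhere in $D$. This yields $w \equiv 0$ in $D$, hence $u_1 = u_2$, completing the proof. The main obstacle I anticipate is not the second stage (which is a direct citation) but rather the rigorous bookkeeping in the first stage, where one must simultaneously handle the potential lack of boundedness of $w$ near $\partial I \cap \partial D$ and the possible presence of irregular boundary points of $D$ on $\Gamma_0 \cap \partial I$; the exhaustion by $I_n$ together with the polarity/zero-$\mu^I_\mathbf{x}$-measure of the irregular set is the mechanism that makes these issues manageable.
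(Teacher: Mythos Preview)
Your proposal is correct and follows essentially the same two-stage strategy as the paper: set $w = u_1 - u_2$, show $w \equiv 0$ on the open set $I$ by recognising it as a solution of the Dirichlet problem on $I$ with zero boundary data (the paper invokes \Cref{thm:uniquenss local solution bvp} directly here, where you unpack the same mechanism via elliptic measure and polarity of irregular points), and then propagate to all of $D$ by unique continuation. The only minor remark is that the paper cites \cite[Theorem~5.1]{Al09} (together with Carleman for $d=2$ and Garofalo--Lin for $d\geq 3$) rather than \cite[Theorem~1.7]{Al09} for the unique continuation step.
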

\begin{proof}
We argue by contradiction. Let $u, \widetilde{u} \in H^1_{loc}(D) \cap C(D)$ be two solutions to the inverse problem \eqref{eq:insidemeasurementsf0fh} and consider $w \coloneqq u - \widetilde{u}$. Then $w \in H^1_{loc}(D) \cap C(D)$ solves the following inverse problem
\begin{equation*} 
\div \big(\mathbf{K} \nabla{w}\big) = 0 \mbox{ in } D, \quad 
w = 0
\mbox{ on } \Gamma_0~\mbox{(in the sense of \eqref{eq:limit regular point})}, \quad
w = 0 \mbox{ on } \Gamma_D, 
\end{equation*}
and hence $w \in H^1_{loc}(I) \cap C(I)$ is a solution to the following problem
\begin{equation*} 
    \div \big(\mathbf{K} \nabla{w}\big) = 0 \mbox{ in } I, \quad
    w = 0 
    \mbox{ on } \partial I~\mbox{(in the sense of \eqref{eq:limit regular point})}.
\end{equation*}
Hypothesis \ref{eq:I} and Theorem~\ref{thm:uniquenss local solution bvp} imply that $w \equiv 0$ in $I$. The unique continuation property for elliptic operators, see Carleman~\cite{Car39} for $d = 2$ and Garofalo and Lin~\cite[Theorem 1.2 (i)]{GaLi86} for $d \geq 3$, or Alessandrini et al.~\cite[Theorem 5.1]{Al09}, implies that $w \equiv 0$ in $D$ and this concludes the proof.

\end{proof}

Assume now that $\partial D = \Gamma_0 \cup \Gamma_1$, where $\varnothing \neq \Gamma_1 \subset \partial{D}$. 
Let $u_0 \in L^1(\Gamma_0;\mu_{\mathbf{z}})$ and $u_1 \in L^1(\Gamma_1;\mu_{\mathbf{z}})$ for a given $\mathbf{z} \in D$ (and hence for all $\mathbf{z} \in D$). 
By \Cref{prop:local solution Lq'}, it follows that
\begin{equation}
\label{eq:local solution elliptic measure}
u(\mathbf{x}) 
\coloneqq \int_{\Gamma_0} u_0(\mathbf{y}) \, \mathrm{d} \mu_\mathbf{x}(\mathbf{y}) 
+ \int_{\Gamma_1} u_1(\mathbf{y}) \, \mathrm{d} \mu_\mathbf{x}(\mathbf{y}), 
\quad \mathbf{x} \in D,
\end{equation}
is a local solution of $\div \big(\mathbf{K} \nabla{u}\big)$ in $D$. 
Let $\Gamma_D \subset D$ be a compact set and assume $u_0 \equiv 0$. Based on \Cref{thm:DeGiorgi Nash}, we define the following linear operator
\begin{equation}\label{eq:compact operator}
\mathcal{T} : L^1(\Gamma_1;\mu_{\mathbf{z}}) \longrightarrow C(\Gamma_D), \quad 
(\mathcal{T} u_1)(\mathbf{x}) 
= \int_{\Gamma_1} u_1(\mathbf{y}) \, \mathrm{d} \mu_\mathbf{x}(\mathbf{y}), 
~\forall~\mathbf{x} \in \Gamma_D, 
~\forall~u_1 \in L^1(\Gamma_1;\mu_{\mathbf{z}}).
\end{equation}
We also note that definition \eqref{eq:compact operator} is independent of $\mathbf{z} \in D$.

\begin{coro}\label{coro:compact} 
The linear operator $\mathcal{T}$ defined by \eqref{eq:compact operator} is injective and compact. 
In particular, $\mathcal{T}$ does not have a bounded inverse.
\end{coro}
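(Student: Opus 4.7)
The plan is to prove compactness and injectivity of $\mathcal{T}$ separately, and then derive the failure of bounded inversion as a soft consequence of compactness on an infinite-dimensional space.

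\emph{Compactness.} I would verify the hypotheses of the Arzel\`a--Ascoli theorem on $C(\Gamma_D)$. For uniform boundedness, the Harnack inequality (invoked exactly as in the proof of \Cref{prop:local solution Lq'}(i)) provides a constant $C > 0$ depending only on $D$, $\mathbf{z}$, and the compact set $\Gamma_D$ such that the Radon--Nikodym density $k_{\mathbf{x}} \coloneqq \mathrm{d}\mu_{\mathbf{x}}/\mathrm{d}\mu_{\mathbf{z}}$ satisfies $k_{\mathbf{x}} \leq C$ uniformly in $\mathbf{x} \in \Gamma_D$; hence $\| \mathcal{T} u_1 \|_{L^\infty(\Gamma_D)} \leq C \, \|u_1\|_{L^1(\mu_{\mathbf{z}})}$. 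For equicontinuity, \Cref{prop:local solution Lq'}(ii) shows that $\mathcal{T} u_1$ extends to a locally H\"older continuous local solution on $D$, and the quantitative interior H\"older estimate \cite[Theorem~8.24]{GiTr01} bounds its H\"older seminorm on $\Gamma_D$ by its sup-norm on a slightly thickened compact neighbourhood inside $D$, which is controlled in turn by $C' \, \|u_1\|_{L^1(\mu_{\mathbf{z}})}$ via the preceding step. Thus the image of the unit ball of $L^1(\Gamma_1;\mu_{\mathbf{z}})$ under $\mathcal{T}$ is uniformly bounded and equicontinuous in $C(\Gamma_D)$, and Arzel\`a--Ascoli finishes this step.

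\emph{Injectivity.} Suppose that $\mathcal{T} u_1 \equiv 0$ on $\Gamma_D$. The representation $u(\mathbf{x}) \coloneqq \int_{\Gamma_1} u_1 \, \mathrm{d}\mu_{\mathbf{x}}$ then lies in $H^1_{\mathrm{loc}}(D) \cap C(D)$ and solves $\div(\mathbf{K}\nabla u) = 0$ in $D$ with $u \equiv 0$ on $\Gamma_D$. One first checks that $u$ vanishes on $\Gamma_0$ in the sense of \eqref{eq:limit regular point}: at a regular point $\mathbf{y} \in \Gamma_0$ one has the weak convergence $\mu_{\mathbf{x}} \to \delta_{\mathbf{y}}$ as $\mathbf{x} \to \mathbf{y}$, and since $\Gamma_0 \cap \Gamma_1 = \varnothing$ and the densities $k_{\mathbf{x}}$ remain uniformly bounded near $\mathbf{y}$, a dominated/Portmanteau argument gives $\lim_{\mathbf{x} \to \mathbf{y}} u(\mathbf{x}) = 0$. \Cref{prop:uniqueness IP}, applied with $u_0 \equiv 0$ and $u_D \equiv 0$, then forces $u \equiv 0$ throughout $D$. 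The remaining, and principal, step is to conclude $u_1 = 0$ in $L^1(\Gamma_1;\mu_{\mathbf{z}})$. I would argue probabilistically via the representation of \cite{ChenZhao95}: writing $u(\mathbf{x}) = \mathbb{E}_{\mathbf{x}}[u_1(X_{\tau})]$, where $X$ denotes the diffusion associated with $\div(\mathbf{K}\nabla \,\cdot\,)$ and $\tau$ is its first exit time from $D$, the closed martingale $u(X_{t \wedge \tau})$ converges a.s.\ to $u_1(X_{\tau})$; since $u \equiv 0$ in $D$, this forces $u_1(X_{\tau}) = 0$ almost surely, which is equivalent to $u_1 = 0$ $\mu_{\mathbf{z}}$-a.e.\ on $\Gamma_1$.

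\emph{No bounded inverse, and main obstacle.} Since $\mu_{\mathbf{z}}$ is a non-atomic probability measure on $\Gamma_1$ in any geometry of interest, the space $L^1(\Gamma_1;\mu_{\mathbf{z}})$ is infinite-dimensional; if a bounded inverse of $\mathcal{T}$ existed on its range, the identity of $L^1(\Gamma_1;\mu_{\mathbf{z}})$ would factor as $\mathcal{T}^{-1} \mathcal{T}$ and would thus be a compact operator, a contradiction. The main obstacle in the whole argument is the last step of the injectivity proof: passing from $u \equiv 0$ in $D$ to $u_1 = 0$ $\mu_{\mathbf{z}}$-a.e.\ on $\Gamma_1$ amounts to a Fatou-type boundary-trace statement for elliptic measure, which under the mild regularity assumed here on $\mathbf{K}$ and $\partial D$ is most transparently handled through the underlying diffusion and martingale convergence, rather than through classical potential-theoretic means.
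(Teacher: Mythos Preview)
Your compactness argument is essentially identical to the paper's: Harnack for uniform boundedness, the interior H\"older estimate \cite[Theorem~8.24]{GiTr01} for equicontinuity, then Arzel\`a--Ascoli. The paper's proof in fact stops there and does not address injectivity or the absence of a bounded inverse at all, so on those points you are supplying more than the paper does.

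Your injectivity sketch is headed in the right direction but has one genuine gap. The step where you claim that ``the densities $k_{\mathbf{x}}$ remain uniformly bounded near $\mathbf{y}$'' is not justified by the Harnack argument you cite: the constant $C$ in \Cref{prop:local solution Lq'}(i) depends on a compact subset of $D$ containing both poles, and as $\mathbf{x} \to \mathbf{y} \in \partial D$ you leave every such compact. What you actually need is a statement of boundary-Harnack or Poisson-kernel type, namely that $\sup_{\xi \in \Gamma_1} k_{\mathbf{x}}(\xi)$ stays bounded (indeed tends to $0$) as $\mathbf{x} \to \mathbf{y} \in \Gamma_0$, using that $\Gamma_1$ is closed and $\mathbf{y} \notin \Gamma_1$. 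This is true in reasonable settings but is a separate ingredient, not a consequence of interior Harnack. A cleaner route to the boundary limit on $\Gamma_0$ is to bound $|u(\mathbf{x})| \le \int_{\Gamma_1}|u_1|\,\mathrm{d}\mu_{\mathbf{x}}$, split $|u_1|$ into a bounded part and an $L^1$-small tail, and control the bounded part by $n\,\mu_{\mathbf{x}}(\Gamma_1) \to 0$ (via regularity of $\mathbf{y}$ and a continuous majorant of $1_{\Gamma_1}$ vanishing at $\mathbf{y}$); but the tail still needs the uniform density bound, so the issue does not disappear. You should also flag explicitly that invoking \Cref{prop:uniqueness IP} requires hypothesis \ref{eq:I} on $\Gamma_D$ (and Lipschitz $\mathbf{K}$ in $d\ge 3$): without \ref{eq:I} injectivity of $\mathcal{T}$ is simply false (take $\Gamma_D$ a single point).

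Your probabilistic martingale-convergence argument for the final boundary-trace step (from $u\equiv 0$ in $D$ to $u_1=0$ $\mu_{\mathbf{z}}$-a.e.) is a nice observation and is the cleanest way to close the argument under the standing assumptions; the no-bounded-inverse conclusion is standard and correct.
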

\begin{proof}
Let $B \subset L^1(\Gamma_1; \mu_{\mathbf{z}})$ be bounded. Analogously to the proof of \Cref{prop:local solution Lq'}, by the Harnack inequality \cite[Corollary 8.21]{GiTr01} and Gilbarg and Trudinger \cite[Theorem 8.24]{GiTr01}, it follows that $\mathcal{T}(B)$ is a family of pointwise bounded and H\"older continuous functions with the H\"older constants uniformly bounded. Therefore, $\mathcal{T}(B)$ is equicontinuous. 
From the Arzela-Ascoli theorem, it follows that $\mathcal{T}(B)$ is relatively compact in $C(\Gamma_D)$ endowed with the supremum norm.
\end{proof}

\begin{rem}
The compactness of operator $\mathcal{T}$ is actually not surprising due to the ill-conditioning of the classical inverse Cauchy problem. However, such a result shows that even if the regularity of the input data from the natural Sobolev space $H^{1/2}(\Gamma_1)$ is relaxed to the weakest possible function space, namely the space of integrable data with respect to the elliptic measure, and the regularity of the output solution is further increased from $L^2(\Gamma_D)$ to $C(\Gamma_D)$, the solution operator is still compact and, consequently, cannot be inverted continuously. 
\end{rem}


\section{Spectral analysis}
\label{ss:convergence_spectrum}
To derive the main stochastic estimators, a systematic spectral analysis of problem \eqref{eq:insidemeasurements} is required. Hence the spectral structure of operator $\mathcal{T}$ defined by \eqref{eq:compact operator} is investigated herein, namely its restriction to a finite set of locations in the domain $D$, where the discrete interior measurements are available. 

We further use the following notations: $\mathcal{H}^{d-1}$ is the $(d-1)-$dimensional Hausdorff measure, $\sigma \coloneqq \mathcal{H}^{d-1}|_{\partial D}$ is the surface measure on $\partial D$ and $L^q(\partial D) \coloneqq L^q(\partial D, \mathrm{d}\sigma)$. Also, assume the following fundamental condition that essentially provides an appropriate $L^2-$framework for analysing the spectral properties of $\mathcal{T}$:
\begin{enumerate}[label=({\bf H.4})]
\item \label{eq:Hq} $\forall~\mathbf{x} \in D$, the elliptic measure $\mu_\mathbf{x}$ is absolutely continuous with respect to $\sigma $ and the Radon-Nikodym derivative $\rho_{\mathbf{x}} \coloneqq \dfrac{\mathrm{d} \mu_\mathbf{x}}{\mathrm{d} \sigma} \in L^2(\partial D)$. 
\end{enumerate}

\begin{rem} Condition {\rm \ref{eq:Hq}} is strongly related to the regularity of $D$ and $\mathbf{K}$, and is usually satisfied, see, e.g., 
\cite[Theorem 1]{FaJeKe84}, \cite{Dahlberg86}, \cite{Fefferman89}, \cite[Theorem 3.5~(ii)]{GrWi82}), \cite[Theorem 3.1]{JeKe81}. 
\end{rem}

Consider $\big(\mathbf{x}^D_i\big)_{i = \overline{1,M_D}} \subset D$, $M_D \geq 1$, and $\boldsymbol{\nu} \coloneqq \big( \nu_i \big)_{i = \overline{1,M_D}} \in \mathbb{R}^{M_D}$ such that 
\begin{equation}\label{eq:nu_sum}
    \nu_i \in (0,1), \quad i = \overline{1, M_D}; \qquad \sum_{i=1}^{M_D} \nu_i^2=1.
\end{equation}

\begin{rem}
Given an accessible set $\Gamma_D$ such that $\big(\mathbf{x}^D_i\big)_{i = \overline{1,M_D}} \subset \Gamma_D \subset D$, $\boldsymbol{\nu} \in \mathbb{R}^{M_D}$ induces a probability measure $m \coloneqq \displaystyle \sum \limits_{i=1}^{M_D} \nu_i^2 \delta_{\mathbf{x}^D_i}$ on $\Gamma_D$. 
If we endow $\mathbb{R}^{M_D}$ with the inner product $\langle \mathbf{x}, \mathbf{y} \rangle_{\nu} \coloneqq \sum\limits_{i=1}^{M_D}\nu_i^2x_i y_i$, $\mathbf{x}, \mathbf{y} \in \mathbb{R}^{M_D}$, then $\mathbb{R}^{M_D}$ is identified isometrically with the space $L^2(\Gamma_{D}, m)$. 
The latter space should be regarded as a discretisation of a limit space $L^2(\Gamma_D, m)$ and $m$ as an {\it importance} distribution on $\Gamma_D$. For example, $\Gamma_D$ could be a Lipschitz curve or surface in $D$, whilst $m$ could be the usual (normalised) surface measure. 
Clearly, other function spaces on $\Gamma_D$ may be considered to obtain their corresponding discretisations. 
For convenience, instead of endowing $\mathbb{R}^{M_D}$ with an inner product motivated by a certain discretisation, in the following, we consider the Euclidean inner product on $\mathbb{R}^{M_D}$ and rescale appropriately the operators with the range $\mathbb{R}^{M_D}$.
\end{rem}

Assume hypothesis \ref{eq:Hq} is satisfied and consider $M_D \geq 1$ internal points 
$\big(\mathbf{x}^D_i\big)_{i = \overline{1,M_D}} \subset D$ and the vector $\boldsymbol{\nu} \coloneqq \big( \nu_i \big)_{i = \overline{1,M_D}} \in \mathbb{R}^{M_D}$ such that relation \eqref{eq:nu_sum} holds. Define the following operator
\begin{eqnarray} \label{eq:Th}
\left.
\begin{array}{l}
T_{\boldsymbol{\nu}} : \big(L^2(\Gamma_1), \Vert \cdot \Vert_{L^2(\Gamma_1)}\big) \longrightarrow \big(\mathbb{R}^{M_D}, \Vert \cdot \Vert\big), \\[4pt]
u \longmapsto T_{\boldsymbol{\nu}} u \in L^2(\Gamma_1) 
= \big(\nu_i \mu_{\mathbf{x}^D_i}(u) \big)_{i = \overline{1,M_D}}
= \displaystyle \bigg( \nu_i \int_{\Gamma_1} u \rho_{\mathbf{x}^D_i} \, \mathrm{d}\sigma \bigg)_{i = \overline{1,M_D}} \in \mathbb{R}^{M_D}.
\end{array}
\right. 
\end{eqnarray}
Consequently, it is easy to see that the adjoint operator to $T_\nu$, denoted by $T_\nu^\ast$, is given by
\begin{eqnarray} \label{eq:Th*}
\left.
\begin{array}{l}
T_{\boldsymbol{\nu}}^\ast : \big(\mathbb{R}^{M_D}, \Vert \cdot \Vert \big) \longrightarrow \big(L^2(\Gamma_1), \Vert \cdot \Vert_{L^2(\Gamma_1)}\big), \\[4pt]
\mathbf{v} = \big( v_i \big)_{i = \overline{1,M_D}} \in \mathbb{R}^{M_D} \longmapsto T_{\boldsymbol{\nu}}^\ast \mathbf{v} 
= \displaystyle \sum_{i=1}^{M_D} \nu_i v_i \rho_{\mathbf{x}^D_i} \in L^2(\Gamma_1).
\end{array}
\right.
\end{eqnarray}
In view of relations \eqref{eq:Th} and \eqref{eq:Th*}, consider the well defined and bounded operator
\begin{equation}\label{eq:B Th*Th}
    T_{\boldsymbol{\nu}}^\ast T_{\boldsymbol{\nu}} : L^2(\Gamma_1) \longrightarrow L^2(\Gamma_1) 
\end{equation}
and introduce the following symmetric and non-negative definite matrices
\begin{align} 
\label{eq:Lambda}
&\boldsymbol{\Lambda} = \big( \Lambda_{ij} \big)_{i,j = \overline{1,M_D}} \in \mathbb{R}^{M_D\times M_D}, \qquad 
\Lambda_{ij} \coloneqq \displaystyle \int_{\Gamma_1} \rho_{\mathbf{x}_i^D} \rho_{\mathbf{x}_j^D} \, \mathrm{d}\sigma, \quad i,j = \overline{1,M_D},\\[4pt]
\label{eq:Lambda_nu}
&\boldsymbol{\Lambda}^{\boldsymbol{\nu}} \coloneqq {\rm diag}(\boldsymbol{\nu}) \; \boldsymbol{\Lambda} \; {\rm diag}(\boldsymbol{\nu}) \in \mathbb{R}^{M_D\times M_D}.
\end{align}

The results listed in \Cref{coro:B-Lambda} reveal that the SVD of operator $T_{\boldsymbol{\nu}}^\ast T_{\boldsymbol{\nu}}$ reduces to the corresponding SVD of matrix $\boldsymbol{\Lambda}^{\boldsymbol{\nu}}$ through an explicit correspondence, whilst its proof is based upon the following lemma. 

\begin{lem} \label{lem:TT*}
Let $U$ and $V$ be Hilbert spaces and $T : U \longrightarrow V$ be a linear bounded operator. Then:
\begin{enumerate}[label={\rm (}\roman*{\rm )}]
\item $T^\ast T$ and $T T^\ast$ have the same eigenvalues.
\item If $\lambda$ is an eigenvalue of $T^\ast T$ (and hence of $T T^\ast$), $U_\lambda \coloneqq \big\{ u\in U \: \big| \: T^\ast T u = \lambda u \big\}$ and $V_\lambda \coloneqq \big\{v \in V \: \big| \: T T^\ast v = \lambda v \big\}$, then $V_\lambda = T (U_\lambda)$ and 
$U_\lambda = T^\ast(V_\lambda)$.
\item If $u \in U$ and $\tilde{u} \in U_\lambda$ are such that $\langle u, \Tilde{u} \rangle_U = 0$, then $\langle T u, T \widetilde{u} \rangle_V = 0$ and a similar statement holds for the corresponding adjoint operators. 
\end{enumerate}
\end{lem}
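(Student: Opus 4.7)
The plan is to verify the three parts using only the defining identity of the adjoint; each reduces to a short Hilbert-space manipulation, with the only care point being the role of nonzero eigenvalues.

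For (i), I would transfer eigenvectors between $T^\ast T$ and $TT^\ast$ via the maps $u \mapsto Tu$ and $v \mapsto T^\ast v$. If $T^\ast T u = \lambda u$ with $\lambda \neq 0$, the identity $\|Tu\|_V^2 = \langle u, T^\ast T u\rangle_U = \lambda \|u\|_U^2$ forces $Tu \neq 0$, and then $TT^\ast(Tu) = T(T^\ast T u) = \lambda Tu$ exhibits $\lambda$ as an eigenvalue of $TT^\ast$; the reverse direction is symmetric. The case $\lambda = 0$ reduces to the comparison of $\ker(T^\ast T) = \ker T$ with $\ker(TT^\ast) = \ker T^\ast$, which may have different dimensions in general; however, the lemma is invoked later only for the nonzero part of the spectrum governing the SVD, so this ambiguity is harmless.

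For (ii), the forward inclusion $T(U_\lambda) \subset V_\lambda$ is immediate from $TT^\ast(Tu) = T(T^\ast T u) = \lambda\, Tu$ whenever $u \in U_\lambda$, and $T^\ast(V_\lambda) \subset U_\lambda$ is analogous. For the reverse inclusion, assuming $\lambda \neq 0$, given $v \in V_\lambda$ I would set $u \coloneqq \lambda^{-1} T^\ast v$ and verify directly that $T^\ast T u = \lambda^{-1} T^\ast(T T^\ast v) = T^\ast v = \lambda u$ while $Tu = \lambda^{-1} T T^\ast v = v$, which places $v$ in $T(U_\lambda)$; swapping the roles of $T$ and $T^\ast$ yields $U_\lambda \subset T^\ast(V_\lambda)$.

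For (iii), I would apply the adjoint property in one line: since $T^\ast T \tilde u = \lambda \tilde u$,
\[
\langle Tu, T\tilde u\rangle_V = \langle u, T^\ast T \tilde u\rangle_U = \lambda \langle u, \tilde u\rangle_U = 0,
\]
with the analogous orthogonality for $T^\ast$ obtained by interchanging $T$ and $T^\ast$. There is no genuine obstacle beyond the $\lambda = 0$ caveat noted in (i)--(ii); the nonzero case is precisely what is needed for the subsequent identification of the nonzero singular values of $T_{\boldsymbol{\nu}}^\ast T_{\boldsymbol{\nu}}$ with the nonzero eigenvalues of the finite symmetric matrix $\boldsymbol{\Lambda}^{\boldsymbol{\nu}}$.
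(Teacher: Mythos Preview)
Your proposal is correct and follows essentially the same approach as the paper: both argue (i) and (ii) by pushing eigenvectors through $T$ and $T^\ast$, recovering the reverse inclusion via $u \coloneqq \lambda^{-1} T^\ast v$, and both dispatch (iii) by the single adjoint identity $\langle Tu, T\tilde u\rangle_V = \langle u, T^\ast T\tilde u\rangle_U = \lambda\langle u,\tilde u\rangle_U$. The paper simply restricts to $\lambda \in (0,\infty)$ from the outset rather than discussing the $\lambda = 0$ caveat separately, but the substance is identical.
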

\begin{proof}
We firstly prove (\textit{i}) and (\textit{ii}) simultaneously. Let $\lambda \in (0,\infty)$ be an eigenvalue of $T^\ast T$ and $u \in U_\lambda$. Then, clearly, $T T^\ast T u= \lambda T u$, hence $\lambda$ is an eigenvalue of $T T^\ast$ and $T(U_\lambda) \subset V_\lambda$.

Now, if $\lambda \in (0,\infty)$ is an eigenvalue of $T T^\ast$ and $v \in V_\lambda$, then $v = T u_v$, where $u_v \coloneqq T^\ast v/\lambda$ and since $T^\ast T u_v = T^\ast v = \lambda u_v$, it follows that $u_v\in U_\lambda$, as well as $v \in T^\ast(U_\lambda)$. Consequently, $V_\lambda \subset T(U_\lambda)$ and hence $V_\lambda = T (U_\lambda)$. Relation $U_\lambda = T^\ast(V_\lambda)$ follows by simply replacing $T$ with $T^\ast$.

\medskip

To prove (\textit{iii}), we simply note that \(\langle Tu, T\tilde{u} \rangle_V = \langle u,  T^\ast T \tilde{u} \rangle_U = \lambda \langle u, \tilde{u} \rangle_U = 0.\)

\end{proof}

\begin{prop}\label{coro:B-Lambda}
The operators $T_{\boldsymbol{\nu}}$ and $T_{\boldsymbol{\nu}}^\ast$, and the matrix $\boldsymbol{\Lambda}^{\boldsymbol{\nu}}$ given by \eqref{eq:Th}, \eqref{eq:Th*} and \eqref{eq:Lambda_nu}, respectively, satisfy the following relation
\begin{equation*}
T_{\boldsymbol{\nu}} T_{\boldsymbol{\nu}}^\ast = \boldsymbol{\Lambda}^{\boldsymbol{\nu}}.
\end{equation*}
In particular, the following spectral correspondence between $T_{\boldsymbol{\nu}}^\ast T_{\boldsymbol{\nu}}$ and $\boldsymbol{\Lambda}^{\boldsymbol{\nu}}$ holds:
\begin{enumerate}[label={\rm (}\roman*{\rm )}]
\item $\boldsymbol{\Lambda}^{\boldsymbol{\nu}}$ and $T_{\boldsymbol{\nu}}^\ast T_{\boldsymbol{\nu}}$ have the same eigenvalues.
\item $u \in L^2(\Gamma_1)$ is an eigenfunction of operator $T_{\boldsymbol{\nu}}^\ast T_{\boldsymbol{\nu}}$ corresponding to the eigenvalue $\lambda \neq 0$ if and only if $\mathbf{u} \in \mathbb{R}^{M_D}$ is an eigenvector of matrix $\boldsymbol{\Lambda}^{\boldsymbol{\nu}}$ corresponding to the eigenvalue $\lambda \neq 0$ and 
\begin{equation*}
u = T_{\boldsymbol{\nu}}^\ast \mathbf{u} 
= \displaystyle \sum \limits_{i=1}^{M_D} \nu_i [\mathbf{u}]_i \rho_{\mathbf{x}_i^D}.
\end{equation*}
\item Let $k \coloneqq {\rm dim}~{\rm Ker}~(\boldsymbol{\Lambda}^{\boldsymbol{\nu}})^{\perp} \leq M_D$ and $\{ \mathbf{u}_1, \mathbf{u}_2, \ldots, \mathbf{u}_k \} \subset \mathbb{R}^{M_D}$ be an orthonormal basis of ${\rm Ker}~(\boldsymbol{\Lambda}^{\boldsymbol{\nu}})^{\perp} \subset \mathbb{R}^{M_D}$ that consists of eigenvectors of $\boldsymbol{\Lambda}^{\boldsymbol{\nu}}$ with the corresponding eigenvalues $\lambda_1 \geq \lambda_2 \geq \ldots \geq \lambda_k > 0$. Define 
\begin{equation}\label{eq:basisB}
u_j 
\coloneqq \lambda_j^{-1/2} T_{\boldsymbol{\nu}}^\ast \mathbf{u}_j 
= \lambda_j^{-1/2} \displaystyle \sum \limits_{i=1}^{M_D} \nu_i [\mathbf{u}_j]_{i} \rho_{\mathbf{x}_i^D}, 
\quad j = \overline{1, k},
\end{equation}
Then $\{ u_1, u_2, \ldots, u_k \}$ is an orthonormal basis of ${\rm Ker}~(T_{\boldsymbol{\nu}}^\ast T_{\boldsymbol{\nu}})^{\perp} \subset L^2(\Gamma_1)$ that consists of eigenfunctions of the operator $T_{\boldsymbol{\nu}}^\ast T_{\boldsymbol{\nu}}$. In particular, ${\rm dim}~{\rm Ker}~(\boldsymbol{\Lambda}^{\boldsymbol{\nu}})^{\perp} = {\rm dim}~{\rm Ker}~(T_{\boldsymbol{\nu}}^\ast T_{\boldsymbol{\nu}})^{\perp}$.
\end{enumerate}
\end{prop}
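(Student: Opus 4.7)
The argument reduces to verifying the operator identity $T_{\boldsymbol{\nu}} T_{\boldsymbol{\nu}}^\ast = \boldsymbol{\Lambda}^{\boldsymbol{\nu}}$ by direct computation and then invoking \Cref{lem:TT*} with $T = T_{\boldsymbol{\nu}}$. To establish the identity, I would take an arbitrary $\mathbf{v} \in \mathbb{R}^{M_D}$, substitute the definition \eqref{eq:Th*} of $T_{\boldsymbol{\nu}}^\ast \mathbf{v}$ into the definition \eqref{eq:Th} of $T_{\boldsymbol{\nu}}$, and interchange the finite sum with the integral over $\Gamma_1$. The $i$-th component of $T_{\boldsymbol{\nu}} T_{\boldsymbol{\nu}}^\ast \mathbf{v}$ then becomes $\sum_{j=1}^{M_D} \nu_i \nu_j \Lambda_{ij} v_j$, which is exactly $[\boldsymbol{\Lambda}^{\boldsymbol{\nu}} \mathbf{v}]_i$ by definitions \eqref{eq:Lambda}--\eqref{eq:Lambda_nu}.

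Once the identity is in place, item (i) follows at once from \Cref{lem:TT*}(i), while item (ii) is a transcription of \Cref{lem:TT*}(ii): for every nonzero eigenvalue $\lambda$, the eigenspace of $T_{\boldsymbol{\nu}}^\ast T_{\boldsymbol{\nu}}$ equals $T_{\boldsymbol{\nu}}^\ast$ applied to the eigenspace of $T_{\boldsymbol{\nu}} T_{\boldsymbol{\nu}}^\ast = \boldsymbol{\Lambda}^{\boldsymbol{\nu}}$, and conversely. The explicit sum $u = \sum_{i=1}^{M_D} \nu_i [\mathbf{u}]_i \rho_{\mathbf{x}_i^D}$ is then just the definition \eqref{eq:Th*} unwound.

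For item (iii), both the orthogonality of the images $T_{\boldsymbol{\nu}}^\ast \mathbf{u}_j$ in $L^2(\Gamma_1)$ and the correct scaling factor are read off from the single computation
\[
\langle T_{\boldsymbol{\nu}}^\ast \mathbf{u}_j,\, T_{\boldsymbol{\nu}}^\ast \mathbf{u}_l \rangle_{L^2(\Gamma_1)} = \langle \mathbf{u}_j,\, T_{\boldsymbol{\nu}} T_{\boldsymbol{\nu}}^\ast \mathbf{u}_l \rangle = \lambda_l \langle \mathbf{u}_j, \mathbf{u}_l\rangle = \lambda_l \, \delta_{jl},
\]
so that $\lambda_j^{-1/2}$ is precisely the factor that normalises $T_{\boldsymbol{\nu}}^\ast \mathbf{u}_j$. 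By \Cref{lem:TT*}(ii), the span of $\{u_1, \ldots, u_k\}$ coincides with the sum of all eigenspaces of $T_{\boldsymbol{\nu}}^\ast T_{\boldsymbol{\nu}}$ corresponding to nonzero eigenvalues, which is ${\rm Ker}\,(T_{\boldsymbol{\nu}}^\ast T_{\boldsymbol{\nu}})^\perp$; the dimension equality ${\rm dim}\,{\rm Ker}\,(\boldsymbol{\Lambda}^{\boldsymbol{\nu}})^\perp = {\rm dim}\,{\rm Ker}\,(T_{\boldsymbol{\nu}}^\ast T_{\boldsymbol{\nu}})^\perp$ then follows from the injectivity of $T_{\boldsymbol{\nu}}^\ast$ on ${\rm Ker}\,(\boldsymbol{\Lambda}^{\boldsymbol{\nu}})^\perp$, itself a consequence of $T_{\boldsymbol{\nu}} T_{\boldsymbol{\nu}}^\ast = \boldsymbol{\Lambda}^{\boldsymbol{\nu}}$.

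The proof is essentially bookkeeping once the operator identity is isolated. I do not anticipate a genuine obstacle; the only mildly delicate point is the multiplicity-matching argument in (iii), handled via the injectivity observation just noted.
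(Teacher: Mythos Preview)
Your proposal is correct and follows essentially the same route as the paper: verify $T_{\boldsymbol{\nu}} T_{\boldsymbol{\nu}}^\ast = \boldsymbol{\Lambda}^{\boldsymbol{\nu}}$ by direct computation, then read off (i) and (ii) from \Cref{lem:TT*}(i)--(ii), and obtain (iii) from (ii) together with the orthogonality observation encoded in \Cref{lem:TT*}(iii). Your treatment of (iii) is slightly more explicit than the paper's (you compute the inner product $\langle T_{\boldsymbol{\nu}}^\ast \mathbf{u}_j, T_{\boldsymbol{\nu}}^\ast \mathbf{u}_l \rangle$ directly to get both orthogonality and the normalisation constant), but this is exactly the content of the paper's appeal to \Cref{lem:TT*}(iii).
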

\begin{proof}~\linebreak\\
\noindent ({\it i})~We apply \Cref{lem:TT*} for $T = T_{\boldsymbol{\nu}}$ given by \eqref{eq:Th}, $U = L^2(\Gamma_1)$ and $V = \mathbb{R}^{M_D}$, noting that
\begin{equation*}\label{eq:TT*}
T_{\boldsymbol{\nu}} T_{\boldsymbol{\nu}}^\ast : \mathbb{R}^{M_D} \rightarrow \mathbb{R}^{M_D}, \quad 
\mathbf{u} \in \mathbb{R}^{M_D} \longmapsto T_{\boldsymbol{\nu}} T_{\boldsymbol{\nu}}^\ast \mathbf{u} = \boldsymbol{\Lambda}^{\boldsymbol{\nu}} \mathbf{u} \in \mathbb{R}^{M_D}.
\end{equation*}
Hence the assertion follows from \Cref{lem:TT*}~({\it i}).

\medskip

\noindent ({\it ii})~According to \Cref{lem:TT*}~({\it ii}), $u \in L^2(\Gamma_1)$ is an eigenfunction of operator $T_{\boldsymbol{\nu}}^\ast T_{\boldsymbol{\nu}}$ corresponding to the eigenvalue $\lambda \neq 0$ if and only if
\begin{equation*}
\exists~\mathbf{u} \in \mathbb{R}^{M_D}: \quad 
\boldsymbol{\Lambda}^{\boldsymbol{\nu}} \mathbf{u} = \lambda \mathbf{u} 
\quad \mbox{ and } \quad 
u = T_{\boldsymbol{\nu}}^\ast \mathbf{u}.
\end{equation*}
Hence the claim follows from expression \eqref{eq:Th*} for $T_{\boldsymbol{\nu}}^\ast$.

\medskip

\noindent ({\it iii})~This assertion is a direct consequence of ({\it ii}) and \Cref{lem:TT*}~({\it iii}). 

\end{proof}

\begin{rem}\label{rem:inherit}
In particular, \Cref{coro:B-Lambda}~{\rm (}iii{\rm )} asserts that the eigenfunctions of operator $T_{\boldsymbol{\nu}}^\ast T_{\boldsymbol{\nu}}$ associated with nonzero eigenvalues inherit the regularity of the densities of the elliptic measure and they are a finite linear combination of the latter.
\end{rem}

Given $\mathbf{b} \in \mathbb{R}^{M_D}$ and $\mathbf{b}^{\boldsymbol{\nu}} \coloneqq {\rm diag}(\boldsymbol{\nu}) \; \mathbf{b}$, we further aim at finding $u \in L^2(\Gamma_1)$ that satisfies the following operator equation
\begin{equation} \label{eq:TTh}
    T_{\boldsymbol{\nu}} u = \mathbf{b}^{\boldsymbol{\nu}}
\end{equation}
by employing the SVD analysis.

\begin{rem}\label{rem:b and u0}
\begin{enumerate}[label={\rm (}\roman*{\rm )}]
\item A natural choice of the scaling vector is $\boldsymbol{\nu} \coloneqq \big( 1/\sqrt{i} \big)_{i = \overline{1, M_D}}$ which corresponds to a uniform relevance of the internal measurements taken at $\big( \mathbf{x}_i^D \big)_{i = \overline{1, M_D}}$.
However, $\boldsymbol{\nu}$ may be chosen in other ways.
In any case, it should be noted that the SVD solution of the operator equation \eqref{eq:TTh} depends heavily upon the choice of the scaling vector $\boldsymbol{\nu}$. 
    
\item In the framework considered herein with $u_0 = 0$ and according to relations \eqref{eq:local solution elliptic measure} and \eqref{eq:Th}, it follows that $\mathbf{b} = \mathbf{u}^D$, where $\mathbf{u}^D = \big( u^D_i \big)_{i=\overline{1, M_D}}$ are the prescribed internal measurements at $\big( \mathbf{x}^D_i \big)_{i=\overline{1, M_D}} \subset D$. 
If $u_0 \neq 0$ then $\mathbf{b} = \mathbf{u}^D - \left(\mu_{\mathbf{x}^D_i}(u_0)\right)_{i=\overline{1,M_D}}$, where $u_0 \in L^1(\Gamma_0,d\mu_\mathbf{z})$ for some $\mathbf{z} \in D$ (hence for any $\mathbf{z} \in D$).
\end{enumerate}
\end{rem} 

Using the notations from \Cref{coro:B-Lambda}, namely $k \coloneqq {\rm dim}~{\rm Ker}~(T_{\boldsymbol{\nu}}^\ast T_{\boldsymbol{\nu}})^\perp$ and $\lambda_1 \geq \lambda_2 \geq \ldots \geq \lambda_k > 0$ are the nonzero eigenvalues of operator $T_{\boldsymbol{\nu}}^\ast T_{\boldsymbol{\nu}}$ in decreasing order (including their multiplicities), we let $r \in \overline{1,k}$ be a prescribed truncation parameter and $P_r : L^2(\Gamma_1) \longrightarrow \mathrm{span}~\big\{v \in L^2(\Gamma_1) \, \big| \, T_{\boldsymbol{\nu}}^\ast T_{\boldsymbol{\nu}} v = \lambda_i v, \ i = \overline{1,r} \big\}$ be the projection operator onto the linear space generated by the eigenfunctions of $T_{\boldsymbol{\nu}}^\ast T_{\boldsymbol{\nu}}$ corresponding to the largest $r$ nonzero eigenvalues $\lambda_1 \geq \lambda_2 \geq \ldots \geq \lambda_r > 0$. The $r-${\it TSVD solution} to the operator equation \eqref{eq:TTh} is defined as 
\begin{equation} \label{eq:u_r}
u^{(r)} \coloneqq P_r u \in \mathrm{span}~\big\{v \in L^2(\Gamma_1) \: \big| \: T_{\boldsymbol{\nu}}^\ast T_{\boldsymbol{\nu}} v = \lambda_i v, \ i = \overline{1,r} \big\} \subset L^2(\Gamma_1),
\end{equation} 
where $u \in  L^2(\Gamma_1)$ is any solution to the following operator equation $T_{\boldsymbol{\nu}}^\ast T_{\boldsymbol{\nu}} u = T_{\boldsymbol{\nu}}^\ast \mathbf{b}^\nu$.

For any matrix $\mathbf{B} \in \mathbb{R}^{n\times m}$ such that $\mathrm{rank}(\mathbf{B}) > 0$ and any prescribed vector $\mathbf{q} \in \mathbb{R}^n$, we consider the following linear system
\begin{equation} \label{eq:LinSyst}
    \mathbf{B} \mathbf{v} = \mathbf{q},
\end{equation}
whose solution $\mathbf{v} \in \mathbb{R}^m$ is sought. We further consider the SVD of $\mathbf{B} \in \mathbb{R}^{n\times m}$, namely
\begin{eqnarray} \label{eq:SVD}
\left.
\begin{array}{ll}
\mathbf{B} = 
\mathbf{U} \mathbf{\Sigma} \mathbf{V}^{\sf T}, 
& \quad \mathbf{U} \in \mathbb{R}^{n \times n}: \ \mathbf{U}^\mathsf{T} \mathbf{U} = \mathbf{I}_n,\\
& \quad \mathbf{V} \in \mathbb{R}^{m \times m}: \ \mathbf{V}^\mathsf{T} \mathbf{V} = \mathbf{I}_m,\\
& \quad \mathbf{\Sigma} \coloneqq {\rm diag}\big(\sigma_1(\mathbf{B}), \sigma_2(\mathbf{B}), \ldots, \sigma_{\mathrm{rank}(\mathbf{B})}(\mathbf{B}) \big) \in \mathbb{R}^{n \times m},
\end{array}
\right.
\end{eqnarray}
where $\sigma_1(\mathbf{B}) \ge \sigma_2(\mathbf{B}) \ge \ldots \ge \sigma_{\mathrm{rank}(\mathbf{B})}(\mathbf{B}) > 0$ are the positive singular values of $\mathbf{B}$. 
Then, for any prescribed regularisation/truncation parameter $r \in \overline{1, \mathrm{rank}(\mathbf{B})}$, we define the corresponding $r-${\it TSVD solution} to the linear system \eqref{eq:LinSyst} by
\begin{equation} \label{eq:u1r}
    \mathbf{v}^{(r)} \coloneqq 
    \mathbf{B}^\dag_r \mathbf{q}, 
    \quad \textrm{ with } \quad 
    \mathbf{B}^\dag_r \coloneqq \mathbf{V} \mathbf{\Sigma}^{-1}_r \mathbf{U}^{\sf T}, 
\end{equation}
where $\mathbf{\Sigma}^{-1}_r \coloneqq {\rm diag}\big( \sigma_1(\mathbf{B})^{-1}, \sigma_2(\mathbf{B})^{-1}, \ldots, \sigma_r(\mathbf{B})^{-1} \big) \in \mathbb{R}^{m \times n}$.

With the above notations \eqref{eq:LinSyst}--\eqref{eq:u1r}, we consider the finite-dimensional linear system
\begin{equation} \label{eq:syst_uu_r}
    \boldsymbol{\Lambda}^\nu \mathbf{u} = \mathbf{b}^\nu 
\end{equation} 
and its corresponding $r-$TSVD solution \begin{equation}\label{eq:uu_r}
\mathbf{u}^{(r)} \coloneqq \left(\boldsymbol{\Lambda}^\nu\right)^\dag_r \mathbf{b}^\nu. 
\end{equation}
Then the following representation holds.

\begin{prop}\label{coro:representation_u(r)}
The $r-$TSVD solution of the operator equation \eqref{eq:TTh} given by \eqref{eq:u_r} and the $r-$TSVD solution of the linear system \eqref{eq:syst_uu_r} given by \eqref{eq:uu_r} satisfy the following relation
\begin{equation}
\label{eq: truncated solutions relation}
    u^{(r)} = \displaystyle \sum \limits_{j=1}^{M_D} \nu_j [\mathbf{u}^{(r)}]_j \rho_{\mathbf{x}^D_j}.
\end{equation}
\end{prop}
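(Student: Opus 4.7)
The plan is to reduce the claim to the identity $u^{(r)} = T_{\boldsymbol{\nu}}^\ast \mathbf{u}^{(r)}$, after which the explicit formula \eqref{eq:Th*} for $T_{\boldsymbol{\nu}}^\ast$ delivers \eqref{eq: truncated solutions relation} immediately. The principal inputs are Proposition \ref{coro:B-Lambda}, which supplies the orthonormal spectral correspondence $T_{\boldsymbol{\nu}}^\ast \mathbf{u}_j = \sqrt{\lambda_j}\, u_j$ with $\{\mathbf{u}_j\}_{j=1}^k$ an orthonormal basis of $\mathrm{Ker}(\boldsymbol{\Lambda}^{\boldsymbol{\nu}})^\perp$ and $\{u_j\}_{j=1}^k$ an orthonormal basis of $\mathrm{Ker}(T_{\boldsymbol{\nu}}^\ast T_{\boldsymbol{\nu}})^\perp$, together with the observation that $\boldsymbol{\Lambda}^{\boldsymbol{\nu}} = T_{\boldsymbol{\nu}} T_{\boldsymbol{\nu}}^\ast$ forces $\mathrm{Ker}\, T_{\boldsymbol{\nu}}^\ast = \mathrm{Ker}\, \boldsymbol{\Lambda}^{\boldsymbol{\nu}}$, so that $T_{\boldsymbol{\nu}}^\ast$ vanishes on the orthogonal complement of $\mathrm{span}\{\mathbf{u}_j\}_{j=1}^k$.

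First I would expand a generic solution $u$ of the normal equation $T_{\boldsymbol{\nu}}^\ast T_{\boldsymbol{\nu}} u = T_{\boldsymbol{\nu}}^\ast \mathbf{b}^{\boldsymbol{\nu}}$ as $u = \sum_{j=1}^k c_j u_j + u_\perp$ with $u_\perp \in \mathrm{Ker}(T_{\boldsymbol{\nu}}^\ast T_{\boldsymbol{\nu}})$. Decomposing $\mathbf{b}^{\boldsymbol{\nu}} = \sum_{j=1}^k \langle \mathbf{b}^{\boldsymbol{\nu}}, \mathbf{u}_j\rangle \mathbf{u}_j + \mathbf{b}^{\boldsymbol{\nu}}_\perp$ with $\mathbf{b}^{\boldsymbol{\nu}}_\perp \in \mathrm{Ker}\,\boldsymbol{\Lambda}^{\boldsymbol{\nu}} = \mathrm{Ker}\, T_{\boldsymbol{\nu}}^\ast$ and applying $T_{\boldsymbol{\nu}}^\ast$, the spectral correspondence yields $T_{\boldsymbol{\nu}}^\ast \mathbf{b}^{\boldsymbol{\nu}} = \sum_{j=1}^k \sqrt{\lambda_j} \langle \mathbf{b}^{\boldsymbol{\nu}}, \mathbf{u}_j\rangle u_j$. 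Matching this against $T_{\boldsymbol{\nu}}^\ast T_{\boldsymbol{\nu}} u = \sum_{j=1}^k c_j \lambda_j u_j$ forces $c_j = \langle \mathbf{b}^{\boldsymbol{\nu}}, \mathbf{u}_j\rangle / \sqrt{\lambda_j}$. The projection $P_r$ kills $u_\perp$ and truncates to $j \leq r$, giving
\begin{equation*}
u^{(r)} = \sum_{j=1}^{r} \frac{\langle \mathbf{b}^{\boldsymbol{\nu}}, \mathbf{u}_j\rangle}{\sqrt{\lambda_j}}\, u_j.
\end{equation*}

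Next I would substitute \eqref{eq:basisB} in the form $u_j = \lambda_j^{-1/2}\, T_{\boldsymbol{\nu}}^\ast \mathbf{u}_j$ and pull $T_{\boldsymbol{\nu}}^\ast$ out of the finite sum:
\begin{equation*}
u^{(r)} = T_{\boldsymbol{\nu}}^\ast\!\left( \sum_{j=1}^{r} \frac{\langle \mathbf{b}^{\boldsymbol{\nu}}, \mathbf{u}_j\rangle}{\lambda_j}\, \mathbf{u}_j \right).
\end{equation*}
Because $\boldsymbol{\Lambda}^{\boldsymbol{\nu}}$ is symmetric positive semidefinite, an admissible SVD \eqref{eq:SVD} is obtained by choosing $\mathbf{U} = \mathbf{V}$ with columns $\mathbf{u}_1,\ldots,\mathbf{u}_{M_D}$ (completing the basis $\{\mathbf{u}_j\}_{j=1}^k$ in the kernel) and singular values $\sigma_j(\boldsymbol{\Lambda}^{\boldsymbol{\nu}}) = \lambda_j$, so formula \eqref{eq:u1r} identifies the bracketed vector with $\mathbf{u}^{(r)} = (\boldsymbol{\Lambda}^{\boldsymbol{\nu}})^\dag_r\, \mathbf{b}^{\boldsymbol{\nu}}$. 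Hence $u^{(r)} = T_{\boldsymbol{\nu}}^\ast \mathbf{u}^{(r)}$, and inserting \eqref{eq:Th*} yields the stated identity.

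I do not anticipate a serious obstacle; the argument is essentially a bookkeeping exercise transferring between the operator-level SVD and the matrix-level SVD through the intertwining relation \eqref{eq:basisB}. The only point worth a line of justification is that, in the presence of repeated eigenvalues, the non-uniqueness of the SVD is immaterial: both $u^{(r)}$ and $\mathbf{u}^{(r)}$ are determined by the spectral projectors onto the top $r$ eigenspaces (which are canonical), so the identity $u^{(r)} = T_{\boldsymbol{\nu}}^\ast \mathbf{u}^{(r)}$ holds independently of the chosen orthonormal basis within each eigenspace.
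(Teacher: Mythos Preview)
Your proof is correct and follows essentially the same approach as the paper: both arguments expand in the orthonormal eigenbases supplied by Proposition~\ref{coro:B-Lambda}, identify the coefficients as $\langle \mathbf{b}^{\boldsymbol{\nu}}, \mathbf{u}_j\rangle/\sqrt{\lambda_j}$, and then use \eqref{eq:basisB} to pass from the $u_j$ to the $\rho_{\mathbf{x}_j^D}$. Your framing via the single identity $u^{(r)} = T_{\boldsymbol{\nu}}^\ast \mathbf{u}^{(r)}$ is slightly more streamlined than the paper's explicit computation of each $\langle u^{(r)}, u_i\rangle_{L^2(\Gamma_1)}$ through the $\Lambda_{ij}$, but the substance is identical.
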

\begin{proof}
Let $\{ {\bf u}_1, {\bf u}_2, \ldots, {\bf u}_k \}$ be an orthonormal basis of ${\rm Ker}(\boldsymbol{\Lambda}^\nu)^\perp$ that consists of eigenvectors of $\boldsymbol{\Lambda}^{\boldsymbol{\nu}}$ with the corresponding eigenvalues $\lambda_1 \geq \lambda_2 \geq \ldots \geq \lambda_k > 0$ and $\{ u_1, u_2, \ldots, u_k \} \subset L^2(\Gamma_1)$ be the orthonormal basis of ${\rm Ker}(T_{\boldsymbol{\nu}}^\ast T_{\boldsymbol{\nu}})^{\perp} \subset L^2(\Gamma_1)$ given by \eqref{eq:basisB} that consists of eigenfunctions of the operator $T_{\boldsymbol{\nu}}^\ast T_{\boldsymbol{\nu}}$. Clearly, $u^{(r)}$ solves uniquely in ${\rm Range}~P_r \subset L^2(\Gamma_1)$ the following operator equation
\begin{equation} \label{eq:uu}
T_{\boldsymbol{\nu}}^\ast T_{\boldsymbol{\nu}} u^{(r)} = P_r T_{\boldsymbol{\nu}}^\ast \mathbf{b}^{\boldsymbol{\nu}}.
\end{equation}
If we consider that $u^{(r)} = \displaystyle \sum \limits_{i = 1}^r \langle u^{(r)}, u_i \rangle_{L^2(\Gamma_1)} u_i$, then according to \eqref{eq:Th*}, relation \eqref{eq:uu} yields
\begin{align*}
\sum \limits_{i = 1}^r \lambda_i \langle u^{(r)},  u_i \rangle_{L^2(\Gamma_1)} u_i 
&= \sum \limits_{j = 1}^{M_D} \nu_j b^\nu_j P_r\big( \rho_{\mathbf{x}_j^D} \big) 
= \sum \limits_{j = 1}^{M_D} \nu_j b^\nu_j 
\sum \limits_{i = 1}^r \langle \rho_{\mathbf{x}_j^D}, u_i \rangle_{L^2(\Gamma_1)} u_i 
= \sum \limits_{i = 1}^r 
\Bigg( \sum \limits_{j = 1}^{M_D} \nu_j b^\nu_j \langle \rho_{\mathbf{x}_j^D}, u_i \rangle_{L^2(\Gamma_1)} \Bigg) u_i.
\end{align*}
From relations \eqref{eq:basisB} and \eqref{eq:Lambda}, it follows that
\begin{align*}
\langle u^{(r)}, u_i \rangle_{L^2(\Gamma_1)} 
&= \dfrac{1}{\lambda_i} 
\sum \limits_{j = 1}^{M_D} \nu_j b^\nu_j \langle \rho_{\mathbf{x}_j^D}, u_i \rangle_{L^2(\Gamma_1)} 
= \dfrac{1}{\lambda_i} 
\sum \limits_{j = 1}^{M_D} \nu_j b^\nu_j \dfrac{1}{\sqrt{\lambda_i}} 
\sum \limits_{\ell = 1}^{M_D} \nu_\ell [\mathbf{u}_{i}]_\ell \langle \rho_{\mathbf{x}_j^D}, \rho_{\mathbf{x}_\ell^D} \rangle_{L^2(\Gamma_1)} \\[4pt]
&= \dfrac{1}{\lambda_i} 
\sum \limits_{j = 1}^{M_D} b^\nu_j \dfrac{1}{\sqrt{\lambda_i}} \left(\boldsymbol{\Lambda}^\nu \mathbf{u}_i \right)_j 
= \dfrac{1}{\lambda_i} 
\sum \limits_{j = 1}^{M_D} b^\nu_j \dfrac{\lambda_i}{\sqrt{\lambda_i}} [\mathbf{u}_{i}]_j 
= \dfrac{1}{\sqrt{\lambda_i}} \langle \mathbf{b}^\nu, \mathbf{u}_i \rangle, 
\quad i = \overline{1,r}.
\end{align*}
Consequently, in view of \eqref{eq:basisB} and \eqref{eq:uu_r}, one obtains
\begin{align*}
u^{(r)} 
&= \sum\limits_{i = 1}^r \dfrac{1}{\sqrt{\lambda_i}} \langle \mathbf{b}^\nu, \mathbf{u}_i \rangle u_i
= \sum\limits_{i = 1}^r \dfrac{1}{\lambda_i} \langle \mathbf{b}^\nu, \mathbf{u}_i \rangle
\sum\limits_{j = 1}^{M_D} \nu_j [\mathbf{u}_{i}]_j \rho_{\mathbf{x}^D_j} \\[4pt]
&= \sum\limits_{j = 1}^{M_D} \nu_j 
\Bigg( \sum \limits_{i = 1}^r \dfrac{1}{\lambda_i} \langle \mathbf{b}^\nu, \mathbf{u}_i \rangle [\mathbf{u}_{i}]_j \Bigg) \rho_{\mathbf{x}^D_j} 
= \sum\limits_{j = 1}^{M_D} \nu_j [\mathbf{u}^{(r)}]_j \rho_{\mathbf{x}^D_j}.
\end{align*}

\end{proof}

\subsection{Discretisation of the inaccessible boundary} \label{ss:discret}
The main aim herein and in \Cref{s:3} is to develop, based on \eqref{eq: truncated solutions relation}, an appropriate method for approximating the elliptic densities $\rho_{\mathbf{x}^D_j}$, $j = \overline{1,M_D}$, and hence the integrals $\displaystyle \int_{\Gamma_1} \rho_{\mathbf{x}_i^D} \rho_{\mathbf{x}_j^D} \, \mathrm{d}\sigma$, $i,j = \overline{1,M_D}$, which ultimately reduces to the approximation of $\boldsymbol{\Lambda}^{\boldsymbol{\nu}}$. More precisely, this is achieved using the following steps:
\begin{enumerate}[label={\bf Step~\arabic*.}]
\item Approximate $\boldsymbol{\Lambda}^{\boldsymbol{\nu}}$ by some $\boldsymbol{\Lambda}^{\boldsymbol{\nu}}_{\omega^1}$ arising from a deterministic discretisation of the inaccessible boundary $\Gamma_1$, see \eqref{eq:Lambda_hat_nu}.
\item Approximate $\boldsymbol{\Lambda}^{\boldsymbol{\nu}}_{\omega^1}$ by $\boldsymbol{\Lambda}^{\boldsymbol{\nu}}_{\omega^1, \varepsilon}$ via some probabilistic representations, see \eqref{eq:Lambda_hat_eps_nu}.
\item Approximate $\boldsymbol{\Lambda}^{\boldsymbol{\nu}}_{\omega^1, \varepsilon}$ by some random matrices $\boldsymbol{\Lambda}^{\boldsymbol{\nu}}_{\omega^1, \varepsilon, N}$ obtained by the Monte Carlo method, see \eqref{eq:Lambda_tilde_nu}.
\end{enumerate}
Hence in the following, we develop systematically the path for the successive approximations of $\boldsymbol{\Lambda}^{\boldsymbol{\nu}}$
\begin{equation}\label{eq:stages}
\boldsymbol{\Lambda}^{\boldsymbol{\nu}} \approx \boldsymbol{\Lambda}^{\boldsymbol{\nu}}_{\omega^1} \approx \boldsymbol{\Lambda}^{\boldsymbol{\nu}}_{\omega^1, \varepsilon} \approx \boldsymbol{\Lambda}^{\boldsymbol{\nu}}_{\omega^1,\varepsilon, N}.
\end{equation}
Note that this also provides one with a similar path for the successive approximations of the elliptic density $\rho_{\mathbf{x}_i^D}$, see \eqref{eq:rho omega1} and \eqref{eq:rho omega1 MC}, 
\begin{equation}\label{eq:stages_rho}
\rho_{\mathbf{x}_i^D} \approx \rho_{\mathbf{x}_i^D,\omega^1} \approx \rho_{\mathbf{x}_i^D,\omega^1, \varepsilon, N}
\end{equation}
as well as a similar procedure for approximating the $r-$TSVD solution of the operator equation \eqref{eq:TTh} given by \eqref{eq:u_r}, see relations \eqref{eq:u_omega_M1_r}, \eqref{eq:u_omega_M1_r_MC} and \eqref{eq:u_omega_M1_r_MC A}, in the form
\begin{equation}\label{eq:stages_u(r)}
u^{(r)} \approx 
u_{\omega^1}^{(r)} \approx 
u_{\omega^1, \varepsilon, N}^{(r)}.  
\end{equation}

The remaining of this section is devoted to {\bf Step~1}, whilst {\bf Steps~2--3} are deferred to Section~\ref{s:3}. 
To do so, we consider $M_1$ boundary points $\mathbf{x}^1 = \big( \mathbf{x}^1_i \big)_{i = \overline{1,M_1}} \subset \Gamma_1$ and some generic corresponding boundary weights $\boldsymbol{\omega}^1 \coloneqq \big(\omega^1_i\big)_{i = \overline{1,M_1}}$, namely
\begin{equation}
\begin{aligned}\label{eq:boundary_weights}
&\omega^1_i: \Gamma_1 \longrightarrow [0,1] \mbox{ is measurable}, 
\quad \omega^1_i(\mathbf{x}_j) = \delta_{ij},~i, j = \overline{1,M_1}, 
\quad \sum_{i=1}^{M_1} \omega^1_i(\mathbf{x}) = 1,~\mathbf{x} \in \Gamma_1,\\[2pt]
&\sigma(\omega^1_i) \coloneqq \int_{\Gamma_1} \omega^1_i \, \mathrm{d}\sigma > 0,~i = \overline{1,M_1}.
\end{aligned}
\end{equation}
For some typical examples of such weights, which for further purposes are in fact defined in a neighborhood of $\Gamma_1$, we refer the reader to \Cref{appendix}. 

Further, define the linear operator
\begin{equation*}
\mathbf{A}^{\boldsymbol{\nu}}_{\omega^1} : \mathbb{R}^{M_1} \longrightarrow \mathbb{R}^{M_D}, \quad 
\Bar{\mathbf{u}} \in \mathbb{R}^{M_1} \longmapsto \mathbf{A}^{\boldsymbol{\nu}}_{\omega^1} \Bar{\mathbf{u}} = T_\nu \left(\sum_{i=1}^{M_1} \Bar{u}_i \omega^1_i \right) \in \mathbb{R}^{M_D}.
\end{equation*}
More precisely, $\mathbf{A}^{\boldsymbol{\nu}}_{\omega^1}$ has the following matrix representation in the standard canonical basis
\begin{equation}\label{eq:AM matrix components}
\big[\mathbf{A}^{\boldsymbol{\nu}}_{\omega^1}\big]_{ij} = \nu_i \mu_{\mathbf{x}^D_i}(\omega^1_j) \eqqcolon \nu_i \big[\mathbf{A}_{\omega^1}\big]_{ij}, 
\quad i = \overline{1, M_D}, \quad j = \overline{1, M_1}.
\end{equation}
Next, define the approximation $\rho_{\mathbf{x}_i^D,\omega^1}$ of $\rho_{\mathbf{x}_i^D}$, $i = \overline{1, M_D}$, by
\begin{equation}\label{eq:rho omega1}
\rho_{\mathbf{x}_i^D,\omega^1}(\mathbf{x}) \coloneqq \sum \limits_{j=1}^{M_1} \sigma\big(\omega^1_j\big)^{-1} \, \mu_{\mathbf{x}_i^D}\big(\omega^1_j\big) \, \omega^1_j(\mathbf{x}), 
\quad \mathbf{x} \in \Gamma_1, 
\quad i = \overline{1, M_D}.
\end{equation}
\begin{rem}\label{rem:diam}
Define
\begin{equation}
{\rm diam}(\omega^1) 
\coloneqq \displaystyle \max_{i = \overline{1, M_1}} 
{\rm diam}\big( {\rm supp}(\omega_i^1) \big),
\end{equation}
where ${\rm diam}(V) \coloneqq \displaystyle \sup_{\mathbf{x}, \mathbf{y} \in V} 
\Vert \mathbf{x} - \mathbf{y} \Vert$. 
The approximation error between $\rho_{\mathbf{x}_i^D,\omega^1}$ and $\rho_{\mathbf{x}_i^D}$, and also those obtained by \eqref{eq:stages} and \eqref{eq:stages_rho}, and hence by \eqref{eq:stages_u(r)}, will fundamentally depend on ${\rm diam}(\omega^1)$. 
In fact, if $\rho$ is at least continuous on $\Gamma_1$, then $\sigma\big(\omega^1_j\big)^{-1} \mu_{\mathbf{x}_i^D}\big(\omega^1_j\big)$ is actually a constant approximation of $\rho$ on ${\rm supp}(\omega_j^1)$, whose error is controlled, for example, by the oscillation of $\rho$ on ${\rm supp}(\omega_j^1)$. 
Consequently, it is natural to consider the boundary weights $\mathbf{\omega}^1$ such that
\begin{equation}
\lim_{M_1 \to \infty} {\rm diam}(\omega^1) = 0.
\end{equation}
For a thorough error analysis, we refer the reader to \cite{CiGrMaII}.
\end{rem}

Note that
\begin{align*}
\int_{\Gamma_1} \rho_{\mathbf{x}_i^D,\omega^1} \rho_{\mathbf{x}_j^D,\omega^1} \, \mathrm{d}\sigma 
& = \sum\limits_{k=1}^{M_1} \sigma\big(\omega^1_k\big)^{-1} \, 
\mu_{\mathbf{x}_i^D}\big(\omega^1_k\big) \, 
\mu_{\mathbf{x}_j^D}\big(\omega^1_k\big) 
= \big\{ \mathbf{A}_{\omega^1} \; 
\big[{\rm diag}\big(\sigma(\omega^1) \big) \big]^{-1} \; 
\mathbf{A}_{\omega^1}^{\sf T} \big\}_{ij}, 
\quad i, j = \overline{1,M_D},
\end{align*}
where ${\rm diag}\big( \sigma(\omega^1) \big) \coloneqq {\rm diag}\big( \sigma(\omega^1_1), \sigma(\omega^1_2), \ldots, \sigma(\omega^1_{M_1}) \big) \in \mathbb{R}^{M_1\times M_1}$.
We further set
\begin{align}
\nonumber
&\boldsymbol{\Lambda}_{\omega^1} 
\coloneqq \mathbf{A}_{\omega^1} 
\big[ {\rm diag}\big( \sigma(\omega^1) \big) \big]^{-1} 
\mathbf{A}_{\omega^1}^{\sf T},\\[4pt]
\label{eq:Lambda_hat_nu}
&\boldsymbol{\Lambda}^{\boldsymbol{\nu}}_{\omega^1} 
\coloneqq {\rm diag}(\boldsymbol{\nu}) \; \boldsymbol{\Lambda}_{\omega^1} \; 
{\rm diag}(\boldsymbol{\nu}) 
= \mathbf{A}^\nu_{\omega^1} \; 
\big[ {\rm diag}\big( \sigma(\omega^1) \big) \big]^{-1} \; 
\big( \mathbf{A}^{\boldsymbol{\nu}}_{\omega^1} \big)^{\sf T}.
\end{align}
As a discretised version of $\mathbf{u}^{(r)}$ given by \eqref{eq:uu_r}, consider the $r-$TSVD solution $\mathbf{u}_{\omega^1}^{(r)}$ of the finite-dimensional linear system 
\begin{equation} \label{eq:syst u_r Lambda omega1}
\boldsymbol{\Lambda}^{\boldsymbol{\nu}}_{\omega_1} \mathbf{u} = \mathbf{b}^{\boldsymbol{\nu}}
\end{equation}
given by
\begin{equation} \label{eq:u_r Lambda omega1}
\mathbf{u}_{\omega^1}^{(r)} \coloneqq 
\big(\boldsymbol{\Lambda}^{\boldsymbol{\nu}}_{\omega^1}\big)^{\dag}_r \mathbf{b}^{\boldsymbol{\nu}} \in \mathbb{R}^{M_1}.
\end{equation}
For a given vector $\mathbf{b} \in \mathbb{R}^{M_D}$, we introduce the $(\omega^1, r)-${\it TSVD approximate solution} $u_{\omega^1}^{(r)}\in L^2(\Gamma_1)$ of the operator equation \eqref{eq:TTh} by
\begin{equation} \label{eq:u_omega_M1_r}
u_{\omega^1}^{(r)}(\mathbf{x}) 
\coloneqq \sum \limits_{i=1}^{M_1} \nu_i \left[\mathbf{u}_{\omega^1}^{(r)} \right]_i \rho_{\mathbf{x}_i^D,\omega^1}(\mathbf{x}), 
\quad \mathbf{x}\in \Gamma_1.
\end{equation}
Note that $u_{\omega^1}^{(r)}$ represents an approximation of the $r-$TSVD solution $u^{(r)}$ to the operator equation \eqref{eq:TTh} given by \eqref{eq:u_r}. 
Moreover, it is straightforward to check that relation \eqref{eq:u_omega_M1_r} may also be written as
\begin{equation}\label{eq:u_omega_matrix_form}
u_{\omega^1}^{(r)}(\mathbf{x}) = 
\sum \limits_{i=1}^{M_1} 
\left( \mathbf{A}^{\boldsymbol{\nu}, \dag, r}_{\omega^1} \mathbf{b}^{\boldsymbol{\nu}} \right)_i \omega_i^1(\mathbf{x}), 
\quad \mathbf{x}\in \Gamma_1,
\end{equation}
where
\begin{equation*}\label{eq: A_r}
\mathbf{A}^{\boldsymbol{\nu}, \dag, r}_{\omega^1} 
\coloneqq 
\left[ {\rm diag}\big( \sigma(\omega^1) \big) \right]^{-1/2} 
\Big( \mathbf{A}^{\boldsymbol{\nu}}_{\omega^1}  
\left[ {\rm diag}\big( \sigma(\omega^1) \big) \right]^{-1/2} \Big)^{\dag}_r 
\in \mathbb{R}^{M_1 \times M_D}.
\end{equation*}

\section{From the probabilistic representation of the elliptic measure to the Monte Carlo TSVD} 
\label{s:3}

According to Section~\ref{ss:convergence_spectrum}, the numerical approximation of the spectrum of operator $T_{\boldsymbol{\nu}}^\ast T_{\boldsymbol{\nu}}$ defined via relations \eqref{eq:Th} and \eqref{eq:Th*} reduces to the numerical approximation $\boldsymbol{\Lambda}^{\boldsymbol{\nu}}_{\omega^1}$ of matrix $\boldsymbol{\Lambda}^{\boldsymbol{\nu}}$ given by \eqref{eq:Lambda_hat_nu}, whilst the numerical approximation of the $r-$TSVD solution $u^{(r)}$ to the operator equation \eqref{eq:TTh} given by \eqref{eq:u_r} reduces to the $(\omega^1,r)-$TSVD approximate solution $u_{\omega^1}^{(r)}$ given by \eqref{eq:u_omega_M1_r} or \eqref{eq:u_omega_matrix_form}. 
Moreover, both these matrix and solution approximations $\boldsymbol{\Lambda}^{\boldsymbol{\nu}}_{\omega^1}$ and $u_{\omega^1}^{(r)}$, respectively, are constructed from the unknown matrix $\mathbf{A}_{\omega_1} = \left(\mu_{\mathbf{x}_i^D}(\omega^1_j)\right)_{\substack{i=\overline{1,M_D}\\ j=\overline{1,M_1}}} \in \mathbb{R}^{M_D \times M_1}$ defined by relation \eqref{eq:AM matrix components}. 
Hence the aim of this section is to approach the remaining {\bf Steps~2--3} and approximate numerically the entries of $\mathbf{A}_{\omega_1}$ based upon probabilistic representations and Monte Carlo sampling.

Apart from hypotheses \ref{Hyp_D}--\ref{Hyp_K}, an additional one is made, namely the anisotropic conductivity tensor $\mathbf{K}$ is assumed to be homogeneous, hence a constant matrix. 
This restriction is made for two reasons, namely ({\it i})~the presentation of the Monte Carlo estimators and the corresponding algorithm is simpler, and ({\it ii})~for homogeneous conductivity coefficients, one can directly use the existing and very fast walk-on-spheres or, in the present case, walk-on-ellipsoids numerical algorithm. 
It should be mentioned that the extension of the present case to that of nonhomogeneous Lipschitz conductivity coefficients would fundamentally follow similar steps, whilst in case of piecewise constant conductivity coefficients, we refer the reader to \cite{Ta10}. 

\subsection{Probabilistic representation}
We further recall the probabilistic representation for the solution to the Dirichlet problem
\begin{equation} \label{eq:dirichletproblem}
\div \big(\mathbf{K} \nabla u(\mathbf{x})\big) = 0, 
\quad \mathbf{x} \in D, \qquad
u(\mathbf{x}) = f(\mathbf{x}), \quad \mathbf{x} \in \partial{D}.
\end{equation}
Let $(B_t)_{t \geq 0}$ be a standard $d-$dimensional Brownian motion, starting from zero, on a filtered probability space $\big(\Omega, \mathcal{F}, (\mathcal{F}_t)_{t \geq 0}, \mathbb{P}\big)$ satisfying the usual hypotheses, and set
\begin{equation} \label{eq:Y}
Y_t^{\mathbf{x}} \coloneqq \mathbf{K}^{1/2}B_t + \mathbf{x}, \quad t \geq 0.
\end{equation}
Note that the infinitesimal generator of $\left( Y_t^\mathbf{x}, \; t \geq 0, \; \mathbf{x} \in \mathbb{R}^d \right)$ is $\dfrac{1}{2} \div \big(\mathbf{K} \nabla \big)$, see \cite[Chapter 7]{Oksendal}.
Further, consider the $\big(\mathcal{F}_t\big)-$stopping time 
\begin{equation}\label{eq:stoppingtime}
\tau_{\partial{D}}^\mathbf{x} \coloneqq 
\inf \big\{ t \geq 0 \: \big| \: Y_t^\mathbf{x} \in \partial{D} \big\}, \quad \mathbf{x} \in \mathbb{R}^d.
\end{equation}
If $f \in C(\partial D)$, then there exists a unique function $u \in C^\infty(D) \cap C(\overline{D})$ which is a solution to problem \eqref{eq:dirichletproblem} and, see, e.g., \cite[Chapter 9]{Oksendal} or \cite{ChenZhao95},
\begin{equation} \label{eq:probabilisticrepresentation}
u(\mathbf{x}) = \mathbb{E} \left[ f\left( Y^\mathbf{x}_{\tau^{\mathbf{x}}_{\partial{D}}} \right) \right], \quad \mathbf{x} \in D.
\end{equation}
For each $\mathbf{x} \in D$, the mapping 
\begin{equation*}
f \in b\mathcal{B}(\partial{D}) \longmapsto 
\mathbb{E} \left[ f \left( Y^\mathbf{x}_{\tau^{\mathbf{x}}_{\partial{D}}} \right) \right]\in \mathbb{R}
\end{equation*}
induces a probability measure on $\partial D$ which is precisely the elliptic measure $\mu_\mathbf{x}$ on $\partial D$ given by \Cref{thm:uniquenss local solution bvp}, namely
\begin{equation} \label{eq:harmonicmeasure}
\mu_\mathbf{x}(f) 
\coloneqq \int_{\partial{D}} f(\mathbf{y}) \mu_\mathbf{x}(d \mathbf{y}) 
= \mathbb{E} \left[ f \left( Y^\mathbf{x}_{\tau_{\partial{D}}} \right) \right], \quad \forall f \in b\mathcal{B}(\partial{D}).
\end{equation}
In particular, with the notations introduced in \Cref{ss:discret}, see \eqref{eq:boundary_weights} and \eqref{eq:AM matrix components}, it follows that
\begin{equation}\label{eq:mc}
\left[ \mathbf{A}_{\omega_1} \right]_{ij} = 
\mathbb{E} \left[ \omega^1_{j} \left( Y^{\mathbf{x}_i^D}_{\tau_{\partial{D}}} \right) \right], 
\quad i = \overline{1,M_D}, 
\quad j = \overline{1,M_1}.
\end{equation}
Consequently, it is natural to employ the plain Monte Carlo method to approximate the expectation in \eqref{eq:mc}.
However, to accelerate the simulation of the random variable $Y^\mathbf{x}_{\tau_{\partial{D}}}$, it is reliable to rely on the ellipsoid version of the well-known walk-on-spheres (WoS) algorithm due to \cite{Muller} and this is briefly presented in the following section.

\subsection{Walk-on-ellipsoids Markov chain} \label{ss:WoE}
Without any loss of generality, we further assume that $1$ is the largest eigenvalue of matrix $\mathbf{K}$. 
Let $\mathbf{x} \in D$ and consider the walk-on-ellipsoids (WoE) Markov chain on $\overline{D}$ constructed as
\begin{equation}\label{eq:wos chain}
X^\mathbf{x}_0=\mathbf{x} \in \overline{D}, 
\quad X^\mathbf{x}_{i+1} = X^\mathbf{x}_i 
+ {\rm dist}\big( X^{\mathbf{x}}_i, \partial D \big) \mathbf{K}^{1/2} \mathbf{U}_i, 
\quad i \geq 0,
\end{equation}
where $\big( \mathbf{U}_i \big)_{i \geq 0}$ are i.i.d. random variables on a probability space $\big( \widetilde{\Omega}, \widetilde{\mathcal{F}}, \widetilde{\mathbb{P}} \big)$, uniformly distributed on the $(d-1)-$dimensional unit sphere from $\mathbb{R}^d$ centered at the origin, denoted by $\mathbb{S}^{d-1}(\mathbf{0},1)$. 
Consider the first iteration when the process enters $\overline{D}_\varepsilon$, where $\varepsilon > 0$
\begin{equation}\label{eq:N epsilon x}
\mathrm{N}^\mathbf{x}_\varepsilon \coloneqq 
\min \big\{i \geq 0 \: \big| \: {\rm dist}\big( X^\mathbf{x}_i, \partial D \big) \leq \varepsilon \big\}.
\end{equation}

Note that the chain defined by \eqref{eq:wos chain} is equal in law to the chain obtained by considering the trace on the corresponding ellipsoids inscribed in $D$ of the continuous-time process $Y_t^\mathbf{x}$ given by \eqref{eq:Y}. 
More precisely, for $\mathbf{x} \in D$ and $\varepsilon > 0$, we consider
\begin{align}\label{eq:Y_chain}
\tau^\mathbf{x}_1 & 
\coloneqq \inf \big\{t \geq 0 \: \big| \: Y^\mathbf{x}_t \in \mathbf{x} + \mathbf{K}^{1/2} \mathbb{S}^{d-1}\big( \mathbf{0}, {\rm dist}(\mathbf{x}, \partial D)\big) \big\},\\
\tau^\mathbf{x}_n & 
\coloneqq \inf \big\{t \geq \tau^\mathbf{x}_{n-1} \: \big| \: Y^\mathbf{x}_t \in Y^\mathbf{x}_{\tau^\mathbf{x}_{n-1}} + \mathbf{K}^{1/2} \mathbb{S}^{d-1}\big(\mathbf{0}, {\rm dist}(Y^\mathbf{x}_{\tau^\mathbf{x}_{n-1}}, \partial D) \big) \big\}, \quad n \geq 2,\\ 
\widetilde{\mathrm{N}}^\mathbf{x}_\varepsilon  & \coloneqq \inf \big\{n \geq 0 \: \big| \: {\rm dist}(Y^\mathbf{x}_{\tau^\mathbf{x}_n}, \partial{D}) \leq \varepsilon \big\},\\
\tau^\mathbf{x}_\varepsilon  & 
\coloneqq \inf \big\{t \geq 0 \: \big| \: {\rm dist}(Y^\mathbf{x}_t, \partial{D}) \leq \varepsilon \big\},
\end{align}
then the chains $\big( X^\mathbf{x}_n \big)_{n \geq 0}$ and $\big( Y^\mathbf{x}_{\tau^\mathbf{x}_n} \big)_{n \geq 0}$ have the same law. 
In particular, $X^\mathbf{x}_{\mathrm{N}^\mathbf{x}_\varepsilon}$ and $Y^\mathbf{x}_{\tau^\mathbf{x}_{\widetilde{\mathrm{N}}^\mathbf{x}_\varepsilon}}$ are equal in distribution.

\subsubsection{Complexity of the walk-on-ellipsoids chain}
Clearly, the computational effort of simulating $Y^\mathbf{x}_{\tau^\mathbf{x}_{\widetilde{\mathrm{N}}^\mathbf{x}_\varepsilon}}$ crucially depends upon the number of steps $\mathrm{N}^\mathbf{x}_\varepsilon$, $\varepsilon >0$, given by \eqref{eq:N epsilon x}, required for the WoE chain to be run in order to assure that the final position is close to the boundary with the prescribed margin $\varepsilon > 0$. 
For the sake of conciseness and without any loss of generality, we mention below various known results that provide sharp estimates for this random number, for $\mathbf{K} = \mathbf{I}_d$ only, which corresponds to the standard WoS chain. 

First of all, it was shown in the seminal paper \cite{Muller} that if $D$ is convex, then $\mathbb{E}\left[\mathrm{N}^\mathbf{x}_\varepsilon\right] \in \mathcal{O}(\log(1/\varepsilon))$ and it was a folklore result that for a general bounded domain in $\mathbb{R}^d$, one has at least the estimate $\mathbb{E}\left[\mathrm{N}^\mathbf{x}_\varepsilon\right] \in \mathcal{O}(({\rm diam}(D)/\varepsilon)^2))$, see \cite{KaSh14}.
A thorough analysis of the WoS rates of convergence with respect to $\varepsilon$ has been performed in \cite{BiBr12}, revealing in particular the following:
\begin{enumerate}[label={\rm (}\roman*{\rm )}]
\item For any bounded domain $D \subset \mathbb{R}^d$, $\mathbb{E}\left[ \mathrm{N}^\mathbf{x}_\varepsilon \right] \in \mathcal{O}\left( \log^2(1/\varepsilon) \right)$ if $d = 2$ and $\mathbb{E}\left[ \mathrm{N}^\mathbf{x}_\varepsilon \right] \in \mathcal{O}\left( (1/\varepsilon)^{2-4/d} \right)$ if $d \ge 3$.
\item If $D \subset \mathbb{R}^d$ satisfies an exterior cone condition, then $\mathbb{E}\left[ \mathrm{N}^\mathbf{x}_\varepsilon \right] \in \mathcal{O}\left( \log(1/\varepsilon) \right)$.
\end{enumerate}
More explicit tail bounds were obtained in \cite{BCPZ}, essentially in terms of the principal curvatures at $\partial D$. In particular, if $D \subset \mathbb{R}^d$ is convex, then 
\begin{equation*} \label{eq:exptail}
\mathbb{P}\left( \mathrm{N}_\varepsilon^{\mathbf{x}} > M \right) 
\leq \left(1 - \dfrac{1}{4d}\right)^M \dfrac{\sqrt{{\rm dist}(\mathbf{x}, \partial D)}}{\sqrt{\varepsilon}}, 
\quad \mathbf{x} \in D.
\end{equation*}
For the fractional Laplacian, we refer the reader to \cite{KyOs18}.

To conclude, it can be easily seen that if the anisotropic conductivity matrix $\mathbf{K}$ is constant and some mild regularity for the domain $D \subset \mathbb{R}^d$ is assumed, then the WoE chain is extremely fast in reaching the $\varepsilon-$neighbourhood of $\partial D$, requiring in average $\mathcal{O}\big (\log(1/\varepsilon) \big)$ steps.

\subsection{Monte Carlo estimators}
Further, we systematically develop the following approximations:
\begin{enumerate}[label={\rm (}\roman*{\rm )}]
\item The random matrix $\mathbf{A}_{\omega^1, \varepsilon, N}$ as a Monte Carlo estimator of matrix $\mathbf{A}_{\omega^1}$.
\item The random density $\rho_{\mathbf{x}_i^D, \omega^1, \varepsilon, N}(\mathbf{x})$, $\mathbf{x} \in \Gamma_1$, as a Monte Carlo estimate of the elliptic density $\rho_{\mathbf{x}_i^D}(\mathbf{x})$, $\mathbf{x} \in \Gamma_1$.
\item The random solution  $u^{(r)}_{\omega^1, \varepsilon, N}$ as a Monte Carlo TSVD approximate solution of $u^{(r)}_{\omega^1}$ given by \eqref{eq:u_omega_M1_r}.
\end{enumerate}

\subsubsection{Monte Carlo approximation of matrix \({\bf A}_{\omega^1}\)}
Let $\big( X^\mathbf{x}_n \big)_{n\geq 0}$, $\mathbf{x} \in \overline{D}$, be the WoE constructed by relation \eqref{eq:wos chain}, $\varepsilon > 0$, and recall relation \eqref{eq:mc}. 
Also, assume that the weight functions $\boldsymbol{\omega}^1$ given by \eqref{eq:boundary_weights} are extended to $\overline{D}_\varepsilon$ by some $\boldsymbol{\omega}^{1, \varepsilon} \coloneqq \big( \omega^{1,\varepsilon}_i \big)_{i = \overline{1,M_1}}$ as
\begin{equation}
\begin{aligned}\label{eq:extended_weights_1}
&\omega_i^{1,\varepsilon} : \overline{D}_\varepsilon \longrightarrow [0,1], 
\quad \omega_i^{1,\varepsilon}|_{\Gamma_1} = \omega_i^1, \quad i=\overline{1,M_1}.
\end{aligned}
\end{equation}
Examples and general properties of such weight functions are provided in \Cref{appendix}.

As an approximation for $\mathbf{A}_{\omega^1}$ we define the matrix $\mathbf{A}_{\omega^1,\varepsilon}$ as
\begin{equation}\label{eq:WoE_matrix}
\left[ \mathbf{A}_{\omega^1, \varepsilon} \right]_{ij} \coloneqq \mathbb{E} \left[ \omega^{1, \varepsilon}_{j}\left( X^{\mathbf{x}_i^D}_{\mathrm{N}^{\mathbf{x}_i^D}_\varepsilon} \right) \right], 
\quad i = \overline{1,M_D}, 
\quad j = \overline{1,M_1},
\end{equation}
as well as its Monte Carlo estimator $\mathbf{A}_{\omega^1, \varepsilon, N}$ which is the random matrix given by
\begin{equation}\label{eq:WoE_matrix_MC}
\left[ \mathbf{A}_{\omega^1, \varepsilon, N} \right]_{ij} \coloneqq 
\dfrac{1}{N} \left[ \omega^{1, \varepsilon}_j\left( X^{\mathbf{x}_i^D, 1}_{\mathrm{N}^{\mathbf{x}_i^D, 1}_\varepsilon} \right) 
+ \ldots 
+ \omega^{1, \varepsilon}_j\left( X^{\mathbf{x}_i^D, N}_{\mathrm{N}^{\mathbf{x}_i^D, N}_\varepsilon} \right) \right], 
\quad i = \overline{1,M_D}, 
\quad j = \overline{1,M_1},
\end{equation}
where for $\mathbf{x}\in D$, $X^{\mathbf{x}, 1}_{\mathrm{N}^{\mathbf{x}, 1}_\varepsilon}, X^{\mathbf{x}, 2}_{\mathrm{N}^{\mathbf{x}, 2}_\varepsilon}, \ldots, X^{\mathbf{x}, N}_{\mathrm{N}^{\mathbf{x}, N}_\varepsilon}$ are $N \geq 1$ i.i.d. copies of $X^\mathbf{x}_{\mathrm{N}^\mathbf{x}_\varepsilon}$.

\begin{rem}
In the particular case of the extrinsic Voronoi weights \eqref{eq:voronoi weights_ext}, the entries of matrix $\mathbf{A}_{\omega^1, \varepsilon, N}$ defined by \eqref{eq:WoE_matrix_MC} are given by
\begin{equation*}\label{eq:Atilde01 Voronoi}
\begin{aligned}
\left[ \mathbf{A}_{\omega^1, \varepsilon, N} \right]_{ij} 
&= \dfrac{1}{N} \left\vert \left\{k \in \overline{1,N} \: \left| \: X^{\mathbf{x}^D_i, k}_{\mathrm{N}^{\mathbf{x}^D_i, k}_\varepsilon} \in V^1_j \right. \right\} \right\vert, 
\quad i = \overline{1,M_D}, 
\quad j = \overline{1,M_1},
\end{aligned}
\end{equation*}
where $V^1_j$, $j = \overline{1,M_1}$, are the extrinsic Voronoi cells in $\mathbb{R}^d$ corresponding to points $\mathbf{x}^1_j$, $j = \overline{1,M_1}$, given by \eqref{eq:disjoint voronoi_ext}.    
\end{rem}
Analogous to \eqref{eq:Lambda_hat_nu}, we also set
\begin{equation}
\label{eq:Lambda_hat_eps_nu}
\boldsymbol{\Lambda}_{\omega^1, \varepsilon} 
\coloneqq \mathbf{A}_{\omega^1, \varepsilon} \; 
\left[ {\rm diag}\big( \sigma(\omega^1) \big) \right]^{-1} \; 
\mathbf{A}_{\omega^1, \varepsilon}^{\sf T}, 
\quad \boldsymbol{\Lambda}^{\boldsymbol{\nu}}_{\omega^1, \varepsilon} 
\coloneqq {\rm diag}(\boldsymbol{\nu}) \; \boldsymbol{\Lambda}_{\omega^1, \varepsilon} \; 
{\rm diag}(\boldsymbol{\nu}) 
\end{equation}
and
\begin{equation}
\label{eq:Lambda_tilde_nu}
\boldsymbol{\Lambda}_{\omega^1, \varepsilon, N} 
\coloneqq \mathbf{A}_{\omega^1, \varepsilon, N} \; 
\left[ {\rm diag}\big( \sigma(\omega^1) \big) \right]^{-1} \; 
\mathbf{A}_{\omega^1, \varepsilon, N}^{\sf T}, 
\quad \boldsymbol{\Lambda}^{\boldsymbol{\nu}}_{\omega^1, \varepsilon, N} 
\coloneqq {\rm diag}(\boldsymbol{\nu}) \; 
\boldsymbol{\Lambda}_{\omega^1, \varepsilon, N} \; 
{\rm diag}(\boldsymbol{\nu}).
\end{equation}

\subsubsection{Monte Carlo solution to the inverse problem (\ref{eq:insidemeasurements})}
We further define
\begin{equation}\label{eq:rho omega1 MC}
\rho_{\mathbf{x}_i^D, \omega^1, \varepsilon, N}(\mathbf{x}) 
\coloneqq \sum \limits_{j=1}^{M_1} \sigma\big( \omega^1_j \big)^{-1} \left[ \mathbf{A}_{\omega^1, \varepsilon, N} \right]_{ij} \omega^1_j(\mathbf{x}), 
\quad \mathbf{x}\in \Gamma_1, 
\quad i = \overline{1,M_D}.
\end{equation}
For any $i = \overline{1,M_D}$, the random function $\rho_{\mathbf{x}_i^D, \omega^1, \varepsilon, N}$ is the final estimator of $\rho_{\mathbf{x}_i^D, \omega^1}$ and thus of the elliptic density $\rho_{\mathbf{x}_i^D}$.

We also consider the $r-${\it TSVD random solution} $\mathbf{u}^{(r)}_{\omega^1, \varepsilon, N}$ defined as
\begin{equation} \label{eq:u_r Lambda omega1 MC}
\mathbf{u}_{\omega^1,\varepsilon,N}^{(r)} \coloneqq 
\big(\boldsymbol{\Lambda}^\nu_{\omega^1,\varepsilon,N}\big)^{\dag}_r \mathbf{b}^\nu \in \mathbb{R}^{M_D}.
\end{equation}
corresponding to the finite-dimensional linear system 
\begin{equation} \label{eq:syst u_r Lambda omega1 MC}
\boldsymbol{\Lambda}^{\boldsymbol{\nu}}_{\omega^1, \varepsilon, N} \mathbf{u} = \mathbf{b}^\nu.
\end{equation}
Finally, we define the Monte Carlo version of \eqref{eq:u_omega_M1_r} by introducing the $(\omega^1, \varepsilon, N, r)-${\it Monte Carlo--TSVD approximate solution} $u^{(r)}_{\omega^1, \varepsilon, N}$ of the operator equation \eqref{eq:TTh} as
\begin{align}\label{eq:u_omega_M1_r_MC}
u_{\omega^1, \varepsilon, N}^{(r)}(\mathbf{x}) 
& \coloneqq \sum \limits_{i=1}^{M_D} \nu_i \left[ \mathbf{u}_{\omega^1, \varepsilon, N}^{(r)} \right]_i \rho_{\mathbf{x}_i^D, \omega^1, \varepsilon, N}(\mathbf{x})
\end{align}
which is thus a random function that approximates $u_{\omega^1}^{(r)}$ and hence $u^{(r)}$ given by \eqref{eq:u_omega_M1_r} and \eqref{eq:u_r}, respectively.
Note that relation \eqref{eq:u_omega_M1_r_MC} may also be recast as
\begin{equation}\label{eq:u_omega_M1_r_MC A}
u^{(r)}_{\omega^1, \varepsilon, N}(\mathbf{x}) 
\coloneqq \sum \limits_{i=1}^{M_1} \left[ \mathbf{A}^{\boldsymbol{\nu}, \dag, r}_{\omega^1, \varepsilon, N} \mathbf{b}^{\boldsymbol{\nu}} \right]_i \omega_i^1(\mathbf{x}), 
\quad \mathbf{x} \in \Gamma_1,
\end{equation}
where
\begin{equation}\label{eq: A_r_MC}
\mathbf{A}^{\nu, \dag, r}_{\omega^1,\varepsilon, N} 
\coloneqq \left[ {\rm diag}\big( \sigma(\omega^1) \big) \right]^{-1/2} 
\Big( {\rm diag}(\boldsymbol{\nu}) \; 
\mathbf{A}_{\omega^1, \varepsilon, N} \; 
\left[ {\rm diag}\big( \sigma(\omega^1) \big) \right]^{-1/2} \Big)^{\dag}_r 
\in \mathbb{R}^{M_1 \times M_D}.
\end{equation}

To conclude, the following estimators have been derived:
\begin{enumerate}[label={\rm (}\roman*{\rm )}]
\item The random matrix $\boldsymbol{\Lambda}^{\boldsymbol{\nu}}_{\omega^1, \varepsilon, N}$ that approximates operator $T_{\boldsymbol{\nu}} T^\ast_{\boldsymbol{\nu}}$ and, consequently, the spectrum of operator $T^\ast_{\boldsymbol{\nu}} T_{\boldsymbol{\nu}}$ can be approximated via \Cref{coro:B-Lambda}. 
\item The random functions $\rho_{\mathbf{x}_i^D, \omega^1, \varepsilon, N}$, $i = \overline{1,M_D}$, that estimate the elliptic densities $\rho_{\mathbf{x}_i^D}$, $i = \overline{1,M_D}$.
\item The random functions $u_{\omega^1, \varepsilon, N}^{(r)}$ that approximate the $r-$TSVD solution $u^{(r)}$ given by \eqref{eq:u_r}.
\end{enumerate}

\begin{rem}
Note that under suitable, yet mild regularity conditions on the domain $D \subset \mathbb{R}^d$, we prove rigorously the strong consistency of the above biased estimators towards their corresponding estimated quantities with explicit control of the errors, see \cite{CiGrMaII}. 
\end{rem}

\section{Numerical simulations and discussion} \label{section:simulations}
The present section is devoted to the description of the algorithm that results as direct consequence Sections~\ref{ss:convergence_spectrum} and \ref{s:3}, see \Cref{ss:algorithm}, and testing its accuracy, convergence, stability and efficiency when solving inverse Cauchy problems for isotropic and anisotropic materials in two- and three-dimensional multiply-connected domains with smooth and piecewise smooth boundaries, see Sections~\ref{ss:2Dexamples} and \ref{ss:3Dexamples}, respectively. 
\subsection{Description of the algorithm} \label{ss:algorithm}
Consider $D \subset \mathbb{R}^d$, $d \geq 2$, a bounded domain occupied by an (an)isotropic material characterised by a constant heat conductivity matrix $\mathbf{K} \in \mathbb{R}^{d \times d}$ such that hypotheses \ref{Hyp_D}--\ref{eq:Hq} are fulfilled. Without any loss of generality, we assume $\mathbf{u}^0 =\big( u^0_i \big)_{i = \overline{1,M_0}} \equiv \mathbf{0} \in \mathbb{R}^{M_0}$ for the measurements taken at the boundary points $\mathbf{x}^0 = \big( \mathbf{x}^0_i \big)_{i = \overline{1,M_0}} \subset \Gamma_0$, see also \Cref{rem:b and u0 simulations}, whilst the measurements $\mathbf{u}^D = \big( u^D_i \big)_{i = \overline{1,M_D}}$ made at the internal points $\mathbf{x}^D = \big( \mathbf{x}^D_i \big)_{i \overline{1,M_D}} \subset D$ and their corresponding weights $\big( \nu_i \big)_{i = \overline{1,M_D}}$ satisfy relation \eqref{eq:nu_sum}. 
\begin{algorithm}[H]
\caption{Main}\label{alg:main}
\begin{algorithmic}
\Require $\big( \mathbf{x}^D_i \big)_{i=\overline{1,M_D}}$,
$\big( \nu_i \big)_{i=\overline{1,M_D}}$,
$\big( u^D_i \big)_{i=\overline{1,M_D}}$, $\mathbf{K}$, ${\rm dist}(\cdot, \partial D)$ \Comment{Given data} \\ 
\hspace{38pt} 
$\big( \omega^1_i(\cdot) \big)_{i=\overline{1,M_1}}$, $\varepsilon$, $N$; \Comment{Approximation parameters}
\Ensure $u^{(r)}_{\omega^1,\varepsilon,N}$

\State $\mathbf{A}_{\omega^1,\varepsilon,N} \gets {\bf MC-REM}\Big( (\mathbf{x}^D_i \big)_{i=\overline{1,M_D}}, \mathbf{K}, {\rm dist}(\cdot, \partial D); \big( \omega^1_i(\cdot) \big)_{i=\overline{1,M_1}}, \varepsilon, N \Big)$ \Comment{Compute MC--REM}

\State $u^{(r)}_{\omega^1, \varepsilon, N}(\mathbf{x}) 
\gets \sum \limits_{i=1}^{M_1} \left[\mathbf{A}^{\boldsymbol{\nu}, \dag, r}_{\omega^1, \varepsilon, N}  {\rm diag}(\boldsymbol{\nu}) \mathbf{u}^D \right]_i \omega_i^1(\mathbf{x})$ \Comment{Compute TSVD solution}
\end{algorithmic}
\end{algorithm}

\begin{algorithm}[H]
\caption{Monte Carlo -- redistributed elliptic measure (MC--REM)}\label{alg:MC-REM}
\begin{algorithmic}
\Require $\big( \mathbf{x}^D_i \big)_{i=\overline{1,M_D}}$, $\mathbf{K}$, ${\rm dist}(\cdot, \partial D)$ \Comment{Given data} \\ 
\hspace{38pt} 
$\big( \omega^1_i(\cdot) \big)_{i=\overline{1,M_1}}$, $\varepsilon$, $N$ \Comment{Approximation parameters}
\Ensure $\mathbf{A}_{\omega^1,\varepsilon,N}$;

\State $\mathbf{A}_{\omega^1, \varepsilon, N} \gets \mathbf{0} \in \mathbb{R}^{M_D \times M_1}$ \Comment{Initialise MC--REM weights matrix}

\For{$k=\overline{1,N}$} \Comment{Generate WoS trajectory for all points $\big( \mathbf{x}^D_i \big)_{i=\overline{1,M_D}}$}

\State $\mathbf{v} \gets \{1, 2, \ldots, M_D\}$ \Comment{Initialise index vector}

\State $X^{\rm WoS}_{\mathbf{v}} \gets \mathbf{x}^D_{\mathbf{v}} \in \mathbb{R}^{d \times M_D}$ \Comment{Initialise coordinates matrix}

\While{$\mathbf{v} \neq [ \, ]$}

\State $\mathbf{r}_{\mathbf{v}} \gets {\rm dist}\big( X^{\rm WoS}_{\mathbf{v}}, \partial D \big)$ \Comment{Compute distances to boundary}

\State $ \mathbf{v} \gets \big\{i \in \mathbf{v} \: \big| \: \ r_i > \varepsilon \big\}$ \Comment{Update index vector}

\State $\mathbf{U} \sim U\big( \mathbb{S}^{d-1}(\mathbf{0},1) \big)$ \Comment{Sample uniformly distributed points on unit sphere}

\State $X^{\rm WoS}_{\mathbf{v}} \gets X^{\rm WoS}_{\mathbf{v}} + \mathbf{K}^{1/2} \mathbf{r}_{\mathbf{v}} \mathbf{U} $\Comment{Update WoS coordinates matrix}

\EndWhile

\For{$i = \overline{1,M_D}$} \Comment{Distribute point weights}

\If{${\rm dist}\big( X^{\rm WoS}_i, \Gamma_1 \big) \leq \varepsilon$}

\State $\left[ \mathbf{A}_{\omega^1, \varepsilon,N} \right]_{ij} \gets \left[ \mathbf{A}_{\omega^1, \varepsilon, N} \right]_{ij} + \frac{1}{N}\omega^{1, \varepsilon}_j(X^{WoS}_i), \ j = \overline{1,M_1}$

\EndIf
\EndFor
\EndFor
\end{algorithmic}
\end{algorithm}
Let ${\rm dist}(\cdot, \partial D)$ be the Euclidean distance function to the boundary $\partial D$, see \Cref{rem:wos algorithm}(ii). We further select some weight functions $\big( \omega^1_i(\cdot) \big)_{i = \overline{1,M_1}}$ associated with points $\big( \mathbf{x}^1_i \big)_{i = \overline{1,M_1}} \subset \Gamma_1$, see relation \eqref{eq:boundary_weights}, which are extended to $\overline{D}_\varepsilon$ by $\big( \omega^{1,\varepsilon}_i \big)_{i = \overline{1,M_1}}$ according to relation \eqref{eq:extended_weights_1}, for a fixed diameter $\varepsilon > 0$ of the $\varepsilon-$shell. Finally, we set the number of Monte Carlo samples as $N \in \mathbb{N}$, i.e. the number of WoS/WoE trajectories of \eqref{eq:wos chain}. The regularised numerical solution $u^{(r)}_{\omega^1,\varepsilon,N}$ given by relation \eqref{eq:u_omega_M1_r_MC A} is determined using the main algorithm (see \Cref{alg:main}) and matrix ${\mathbf{A}}_{\omega^1, \varepsilon, N}$ that is defined by \eqref{eq:WoE_matrix_MC} and computed according to the Monte Carlo -- redistributed elliptic measure (MC--REM) algorithm (see \Cref{alg:MC-REM}).

\begin{rem}\label{rem:wos algorithm} 
\begin{enumerate}[label={\rm (}\roman*{\rm )}]
\item The while loop in \Cref{alg:MC-REM} ends in a finite number of steps a.s., see \Cref{ss:WoE}. Moreover, the while loop with a random number of steps can be replaced with a loop with a fixed deterministic number of iterations \cite{BCPZ}.

\item In many practical situations, approximations of the distance function ${\rm dist}(\cdot, \partial D)$ are actually used which may easily be computed for numerous domains \cite[Section 5.1]{SaCr20}. One may use a Lipschitz replacement that bounds the distance function from below in $D$ and from above (modulo a constant) in $D \setminus D_{\varepsilon}$, see \cite{BCPZ}, respectively.
\end{enumerate}
\end{rem}

\begin{rem}\label{rem:b and u0 simulations}
If $\mathbf{u}^0 = \big( u^0_i \big)_{i=\overline{1,M_0}} \neq \mathbf{0}$, then according to \Cref{rem:b and u0}, one computes the TSVD solution as
\begin{equation}\label{eq:tsvd solution simulations}
u^{(r)}_{\omega^1, \varepsilon, N}(\mathbf{x}) 
\coloneqq \sum \limits_{i=1}^{M_1} \left[\mathbf{A}^{\boldsymbol{\nu}, \dag, r}_{\omega^1,\varepsilon, N} \; {\rm diag}(\boldsymbol{\nu}) \left( \mathbf{u}^D - \mathbf{A}_{\omega^0, \varepsilon, N} \mathbf{u}^0 \right) \right]_i \omega_i^1(\mathbf{x}),
\end{equation}
where $\mathbf{A}_{\omega^0, \varepsilon, N}$ is a random matrix similar to $\mathbf{A}_{\omega^1, \varepsilon, N}$ defined by \eqref{eq:WoE_matrix_MC}, for some weights $\big( \omega_i^{0, \varepsilon} \big)_{i=\overline{1,M_0}}$ similar to $\big( \omega_i^{1, \varepsilon} \big)_{i=\overline{1,M_1}}$ given by relation \eqref{eq:extended_weights_1}, however with $\Gamma_0$ instead of $\Gamma_1$.
\end{rem}

\subsection{Two-dimensional examples} 
\label{ss:2Dexamples}
For the two-dimensional examples investigated herein, the measurements taken at $M_0$ boundary points $\big( \mathbf{x}_i^0 \big)_{i= \overline{1,M_0}} \subset \Gamma_0$ and $M_D$ internal points $\big( \mathbf{x}_i^D \big)_{i= \overline{1,M_D}} \subset D$ are denoted by $u^0_i \coloneqq u^{\rm (ex)}\big( \mathbf{x}_i^0 \big)$, $i = \overline{1, M_0}$, and $u^D_i \coloneqq u^{\rm (ex)}\big( \mathbf{x}_i^D \big)$, $i = \overline{1, M_D}$, respectively, where $u^{\rm (ex)}$ is the corresponding exact solution. In addition, $M_1$ boundary points $\big( \mathbf{x}^1_i \big)_{i = \overline{1, M_1}} \subset \Gamma_1$ are considered, while $\nu_i = \dfrac{1}{\sqrt{M_D}}$, ${i=\overline{1,M_D}}$, and the associated weight functions $\big( \omega^1_i(\cdot) \big)_{i = \overline{1,M_1}}$ are taken to be the corresponding extrinsic Voronoi weights, see \Cref{ex:A.2}.

Furthermore, the performance of the proposed algorithm described in \Cref{ss:algorithm} is tested for various homogeneous (an)isotropic materials, i.e. characterised by a constant conductivity matrix $\mathbf{K} \in \mathbb{R}^{d \times d}$, $d = 2, 3$, occupying a bounded domain $D \subset \mathbb{R}^d$, $d = 2, 3$, by analysing the following:
\begin{enumerate}[label={\rm (}\roman*{\rm )}]
\item The Monte-Carlo estimator $\rho_{\mathbf{x}_i^D, \omega^1, \varepsilon, N}$ defined by \eqref{eq:rho omega1 MC} of the density of the associated elliptic measure $\mu_{\mathbf{x}^D_i}$, $\mathbf{x}^D_i \in D$, given by \eqref{eq:harmonicmeasure}, on the inaccessible boundary $\Gamma_1 \subset \partial D$.

\item The eigenvalues $\widetilde{\lambda}_1 \geq \widetilde{\lambda}_2 \geq \ldots \geq \widetilde{\lambda}_{M_D}$ of the Monte-Carlo estimator $\boldsymbol{\Lambda}^{\boldsymbol{\nu}}_{\omega^1, \varepsilon, N}$, see \eqref{eq:Lambda_tilde_nu}, of operator $T_{\boldsymbol{\nu}}^\ast T_{\boldsymbol{\nu}}$ defined by \eqref{eq:B Th*Th}, as well as the corresponding Monte-Carlo estimators of eigenvectors of  $T_{\boldsymbol{\nu}}^\ast T_{\boldsymbol{\nu}}$
\begin{equation}\label{eq:eigenfunctions_MC}
\widetilde{u}_j 
\coloneqq \widetilde{\lambda}_j^{-1/2} 
\displaystyle \sum \limits_{i=1}^{M_D} 
\nu_i [\widetilde{\mathbf{u}}_j]_{i} \rho_{\mathbf{x}_i^D, \omega^1, \varepsilon, N}, 
\quad j = \overline{1, k},
\end{equation}
where $\big( \widetilde{\mathbf{u}}_j \big)_{j=\overline{1,M_D}}$ is an orthonormal eigenbasis of $\left({\rm Ker}~\big( \boldsymbol{\Lambda}^{\boldsymbol{\nu}}_{\omega^1, \varepsilon, N} \big)\right)^{\perp}$.

\item The Monte-Carlo estimator $u_{\omega^1, \varepsilon, N}^{(r)}$ defined by \eqref{eq:tsvd solution simulations}, see also relations \eqref{eq:u_omega_M1_r_MC}--\eqref{eq:u_omega_M1_r_MC A}, at $M_1$ boundary points $\big( \mathbf{x}^1_i \big)_{i = \overline{1, M_1}} \subset \Gamma_1$, for various levels of the truncation $r$.

\item The pointwise approximation error $\big\vert u^{\rm (ex)}\big( \mathbf{x}^D_i \big) - \widehat{u}^{D,r}_i \big\vert$ at $M_D$ internal points $\mathbf{x}^D_i \in D$, $i = \overline{1, M_D}$, where 
$\widehat{u}^{D,r}_i 
\coloneqq \left( \mathbf{A}_{\omega^1, \varepsilon, N} \mathbf{u}_{\omega^1, \varepsilon, N}^{(r)} 
+ \mathbf{A}_{\omega^0, \varepsilon, N} \mathbf{u}^0 \right)_i$, $i = \overline{1, M_D}$, 
with $\mathbf{A}_{\omega^1, \varepsilon, N}$ given by \eqref{eq:WoE_matrix_MC}, $\mathbf{u}_{\omega^1, \varepsilon, N}^{(r)} \coloneqq \left( u_{\omega^1, \varepsilon, N}^{(r)}\big( \mathbf{x}^1_j \big) \right)_{j=\overline{1,M_1}}$, $\mathbf{A}_{\omega^0, \varepsilon, N}$ defined according to \Cref{rem:b and u0 simulations}, and $\mathbf{u}^0 \coloneqq \big(u_i^0\big)_{i = \overline{1,M_0}}$, for various truncation levels $r$.
\end{enumerate}

\begin{ex} \label{ex:ex01}
Consider $D = D_0 \setminus \overline{D}_1 \subset \mathbb{R}^2$, where $D_0 = (-1,1)^2$, $D_1 = \big\{ \mathbf{x} \in \mathbb{R}^2 \: \big| \: \Vert \mathbf{x} - \mathbf{x}^{(1)} \Vert < R_1 \big\}$, $\mathbf{x}^{(1)} = (0.5,0)$ and $R_1 = 0.2$, while $\Gamma_\ell = \partial D_\ell$, $\ell = 0, 1$, and $\Gamma_D = \big\{\mathbf{x} \in D \: \big| \: {\rm dist}(\mathbf{x}, \Gamma_0) = 0.05 \big\}$, see \Cref{Fig1}. Here, we set $M_0 = 400$, $M_1 = 50$, $M_D = 40$, $N = 10^6$ Monte Carlo samples, and $\varepsilon = 10^{-7}$. 
\end{ex}

\begin{ex} \label{ex:ex02}
Let $D = D_0 \setminus \overline{D}_1 \subset \mathbb{R}^2$, where $D_\ell = \big\{ \mathbf{x} \in \mathbb{R}^2 \: \big| \: \Vert \mathbf{x} - \mathbf{x}^{(\ell)} \Vert < R_\ell \big\}$ and $\mathbf{x}^{(\ell)} = (0,0)$, $\ell = 0, 1$, $R_0 = 1$ and $R_1 = 0.5$, while $\Gamma_\ell = \partial D_\ell$, $\ell = 0, 1$, and $\Gamma_D = \big\{ \mathbf{x} \in D \: \big| \: {\rm dist}(\mathbf{x}, \Gamma_0) = 0.05 \big\}$, see \Cref{Fig6a}. Here, $M_0 = 500$, $M_1 = 100$, $M_D = 100$, $N = 10^6$ Monte Carlo samples, and $\varepsilon = 10^{-10}$. 
\end{ex}

\begin{ex} \label{ex:ex03}
We let $D = D_0 \setminus \overline{D}_1 \subset \mathbb{R}^2$, where $D_\ell = \big\{ \mathbf{x} \in \mathbb{R}^2 \: \big| \: \Vert \mathbf{x} - \mathbf{x}^{(\ell)} \Vert < R_\ell \big\}$ and $\mathbf{x}^{(\ell)} = (0,0)$, $\ell = 0, 1$, $R_0 = 1$ and $R_1 = 0.2$, while $\Gamma_\ell = \partial D_\ell$, $\ell = 0, 1$, and $\Gamma_D = \big\{\mathbf{x} \in D \: \big| \: {\rm dist}(\mathbf{x}, \Gamma_0) = 0.05 \big\}$, see \Cref{Fig6b}. Here, we set $M_0 = 500$, $M_1 = 100$, $M_D = 100$, $N = 10^6$ Monte Carlo samples, and $\varepsilon = 10^{-10}$.
\end{ex}

\begin{ex} \label{ex:ex04}
Consider $D = D_0 \setminus \displaystyle \bigcup_{\ell=1}^5 \overline{D}_\ell \subset \mathbb{R}^2$, where $D_\ell = \big\{ \mathbf{x} \in \mathbb{R}^2 \: \big| \: \Vert \mathbf{x} - \mathbf{x}^{(\ell)} \Vert < R_\ell \big\}$, $\ell = \overline{0, 5}$, $\mathbf{x}^{(0)} = (0,0)$, $R_0 = 1$, $\mathbf{x}^{(\ell)} = 0.5 \: (\cos{\theta_\ell}, \sin{\theta_\ell})$, $\theta_\ell = 2 (\ell - 1) \pi \big/ 5$ and $R_\ell = 0.2$, $\ell = \overline{1, 5}$, while $\Gamma_0 = \partial D_0$, $\Gamma_1 = \displaystyle \bigcup_{\ell=1}^5 \partial D_\ell$, and $\Gamma_D = \big\{ \mathbf{x} \in \mathbb{R}^2 \: \big| \: {\rm dist}(\mathbf{x}, \Gamma_0) = 0.05 \big\}$, see \Cref{Fig6c}. Here, $M_0 = 500$, $M_1 = 500$ {\rm (}$M_1/5$ for each $\partial D_\ell$, $\ell = \overline{1,5}${\rm )}, $M_D = 100$, $N = 10^6$ Monte Carlo samples, and $\varepsilon = 10^{-10}$. 
\end{ex}

\begin{ex} \label{ex:ex05}
Let $D = D_0 \setminus \overline{D}_1 \subset \mathbb{R}^2$, where $D_0 = (-1,1)^2$, $D_1 = \big\{ \mathbf{x} \in \mathbb{R}^2 \: \big| \: \Vert \mathbf{x} - \mathbf{x}^{(1)} \Vert < R_1 \big\}$, $\mathbf{x}^{(1)} = (0.5,0)$ and $R_1 = 0.2$, while $\Gamma_0 = \varnothing$, $\Gamma_1 = \partial D_0 \cup \partial D_1$, and $\Gamma_D = \big\{ \mathbf{x} \in \mathbb{R}^2 \: \big| \: \Vert \mathbf{x} - \mathbf{x}^{(D)} \Vert = R_D \big\}$ with $\mathbf{x}^{(D)} = (-0.5, 0.5)$ and $R_D = 0.3$, see \Cref{Fig6d}. Here, we set $M_1 \coloneqq M_{\partial D_0} + M_{\partial D_1} = 450$ {\rm (}$M_{\partial D_0} = 400$ and $M_{\partial D_1} = 50${\rm )}, $M_D = 40$, $N = 10^6$ Monte Carlo samples, and $\varepsilon = 10^{-10}$.
\end{ex}

For Examples~\ref{ex:ex01}--\ref{ex:ex05}, we also consider three exact solutions in the isotropic case $\mathbf{K} = \mathbf{I}_2$, namely
\begin{subequations} \label{eq:2D-sol}
\begin{align}
\label{eq:2D-sol1}
&u^{\rm (ex)}(x_1,x_2) = \dfrac{3}{4} (1 - x_1^2) + \dfrac{1}{4} x_2^3, 
\quad \mathbf{x} = (x_1,x_2) \in \overline{D}\\
\label{eq:2D-sol2}
&u^{\rm (ex)}(x_1,x_2) = -\dfrac{1}{3} x_1^3 - x_1^2 x_2 + x_1 x_2^2 + \dfrac{1}{3} x_2^3, 
\quad \mathbf{x} = (x_1,x_2) \in \overline{D}\\
\label{eq:2D-sol3}
&u^{\rm (ex)}(x_1,x_2) = x_1^2 - x_2^2, 
\quad \mathbf{x} = (x_1,x_2) \in \overline{D}
\end{align}
as well as an additional exact solution in the anisotropic case $\mathbf{K} = \big( K_{ij} \big)_{i, j = 1, 2}$ with $K_{11} = 1$, $K_{12} = K_{21} = 0.3$ and $K_{22} = 0.4$, given by 
\begin{align}
\label{eq:2D-sol4}
&u^{\rm (ex)}(x_1,x_2) = \dfrac{2 K_{21} - K_{22}}{3 K_{11}} x_1^3 - x_1^2 x_2 + x_1 x_2^2 - \dfrac{2 K_{12} - K_{11}}{3 K_{22}} x_2^3, 
\quad \mathbf{x} = (x_1,x_2) \in \overline{D}.
\end{align}
\end{subequations}

Moreover, for the aforementioned Examples~\ref{ex:ex01}--\ref{ex:ex05}, the boundary and internal points are uniformly distributed on the boundaries $\Gamma_\ell$, $\ell = 0, 1$, and the closed curve $\Gamma_D \subset D$, respectively, and ordered in a counterclockwise sense.

\Cref{Fig01} presents the independent samples drawn from the harmonic density estimators for \Cref{ex:ex01}. 
More precisely, \Cref{Fig1a} displays the color map on the inaccessible boundary $\Gamma_1$, obtained from one sample drawn from the averaged estimators of the harmonic densities with poles at $\big( \mathbf{x}_i^D \big)_{i= \overline{1,M_D}}$, namely the random vector $\left(\dfrac{1}{M_D} \displaystyle \sum_{i=1}^{M_D} \rho_{\mathbf{x}_i^D, \omega^1, \varepsilon, N}\big( \mathbf{x}_j^1 \big)\right)_{j= \overline{1,M_1}}$, for \Cref{ex:ex01}, and this shows the influence, on average, of the location of the internal measurements on the inaccessible boundary with a decreasing impact from the red to the blue regions on $\Gamma_1$, respectively.
The numerical representation of the color map depicted in \Cref{Fig1a} is showed in \Cref{Fig1b}. 
Ten independent samples drawn from the aforementioned random vector are illustrated in \Cref{Fig1c}, as well as their corresponding empirical mean which shows that the Monte Carlo estimator has a small variance.
\Cref{Fig1d} presents ten independent samples drawn from the harmonic density estimator $\rho_{\mathbf{x}_i^D, \omega^1, \varepsilon, N}\big( \mathbf{x}_j^1 \big)$, $j \in \big\{ 0, 4, 8, \ldots, 36 \big\}$, and their empirical mean which is also small and hence confirms that the estimator itself has a small variance.

The reconstruction of the eigenvalues of the direct operator $T^{\ast}_{\boldsymbol{\nu}} T_{\boldsymbol{\nu}}$ corresponding to the geometry considered in \Cref{ex:ex01} is presented in \Cref{Fig02}. 
\Cref{Fig2a,Fig2b} display on a linear and a semi-logarithmic scales, respectively, ten independent samples drawn from $\widetilde{\lambda}^{\boldsymbol{\nu}}_i$, $i = \overline{1,M_D}$, which represent the estimators for the eigenvalues of the matrix operator $\Lambda_{\omega^1, \varepsilon, N}^{\boldsymbol{\nu}}$ and hence the estimators for the eigenvalues of the direct operator $T^{\ast}_{\boldsymbol{\nu}} T_{\boldsymbol{\nu}}$, as well as their empirical mean, whilst the gaps of these sampled eigenvalues are presented, on a semi-logarithmic scale, in \Cref{Fig2c}. 
\Cref{Fig4b} illustrates ten independent samples of the eigenvector estimators $\widetilde{u}_\ell$, $\ell = \overline{1,15}$, given by \eqref{eq:eigenfunctions_MC}, for the corresponding first fifteen eigenvectors of $T^{\ast}_{\boldsymbol{\nu}} T_{\boldsymbol{\nu}}$, retrieved at $\big( \mathbf{x}_j^1 \big)_{j = \overline{1,M_1}} \subset \Gamma_1$. 
The following conclusions can be drawn from the results presented in \Cref{Fig02}. Firstly, the estimators for the eigenvalues of the direct operator $T^{\ast}_{\boldsymbol{\nu}} T_{\boldsymbol{\nu}}$ decay drastically, hence the severe instability of the inverse problem. Secondly, the estimators of the first ten eigenvectors have small variances, whilst the remaining ones seem to be poorly recovered and with a high variance. This indicates that without any further {\it a priori} information on the unknown solution, one can hope for a reliable reconstruction only in the subspace spanned by the first ten eigenvectors sampled in \Cref{Fig4b}. 
Consequently, if the unknown true solution can be well represented by the above first ten eigenvectors, then its reconstruction on $\Gamma_1$ from the measurements provided on $\Gamma_D$ and $\Gamma_0$ should be feasible and stable.

\Cref{Fig5a} displays ten independent samples drawn from the Monte Carlo estimator $u^{(r)}_{\omega^1, \varepsilon, N}$ of the corresponding SVD solution truncated at the $r-$th estimated eigenvalue, $r = \overline{1,15}$, at $\big( \mathbf{x}_j^1 \big)_{j = \overline{1, M_1}} \subset \Gamma_1$, obtained for \Cref{ex:ex01}, their corresponding empirical mean and the exact solution \eqref{eq:2D-sol1}.
The results showed in \Cref{Fig5a} can easily be correlated with those illustrated in \Cref{Fig4b}, in the sense that the empirical variance of the estimated TSVD solution increases significantly, provided that truncation is performed below the same first ten eigenvalues considered to be reliable according to \Cref{Fig4b}. 
For the exact solution $u^{\rm (ex)}$ given by \eqref{eq:2D-sol1}, we set $f \coloneqq u^{(r)}_{\omega^1, \varepsilon, N}$ on $\Gamma_1$, $r = \overline{1,15}$, and $f \coloneqq u^{\rm (ex)}\big|_{\Gamma_0}$ on $\Gamma_0$, and compute the approximations $\widehat{u}_i^{D,r}$, $r = \overline{1,15}$, of $\mu_{\mathbf{x}_i^D}(f)$, $i = \overline{1,M_D}$, for comparison purposes with the corresponding prescribed internal measurements $u_i^D$, $i = \overline{1,M_D}$. The absolute errors $\big\vert u_i^D - \widehat{u}_i^{D,r} \big\vert$, $i = \overline{1, M_D}$, $r = \overline{1, 15}$, retrieved for \Cref{ex:ex01} and the exact solution \eqref{eq:2D-sol1}, are presented in \Cref{Fig5d}. It can be seen from this figure that each of the fifteen TSVD solutions fits the inner measurements within a uniform error of order $O(10^{-3})$. 
Therefore, deciding which of the fifteen possible approximate solutions should be considered as an approximation of the exact unknown Dirichlet data should also depend on some {\it a priori} knowledge of the solution, provided that the latter exists. 
Nonetheless, the information provided by \Cref{Fig1a} can always guide the selection since one can observe that the boundary points on $\Gamma_1$ farther away from $\Gamma_0$, i.e. $\big( \mathbf{x}_i^1 \big)_{i = \overline{13,38}}$, are less significant as measurements than the boundary points on $\Gamma_1$ closer to $\Gamma_0$, i.e. $\big( \mathbf{x}_i^1 \big)_{i = \overline{1,12}}$. 
This may also be correlated with the fact that the truncated solutions in \Cref{Fig5a} oscillate more rapidly at the boundary points on $\Gamma_1$ farther away from $\Gamma_0$ with respect to increasing the truncation parameter $r$. 
Moreover, an L--curve type criterion for the selection of the truncation parameter $r$ applied to \Cref{Fig2a,Fig2b} suggests that a good choice would be $r \in \overline{3,6}$, with the mention that $r = 6$ is preferred since the approximate eigenvalues $\widetilde{\lambda}_6$ and $\widetilde{\lambda}_7$ are well separated. 

Similar results to those depicted in \Cref{Fig5ad} have been obtained for \Cref{ex:ex01} and the exact solution given by \eqref{eq:2D-sol2} and \eqref{eq:2D-sol3}, and these are presented in \Cref{Fig5be,Fig5cf}, respectively. 
It should be mentioned that, in case of \Cref{ex:ex01} and the exact solution given by \eqref{eq:2D-sol2} and \eqref{eq:2D-sol3}, the accuracy of the numerical reconstruction of the Dirichlet data on $\Gamma_1$ is guaranteed by the good approximation of its exact counterpart $u^{\rm (ex)}\big|_{\Gamma_1}$ via its projection on the linear subspace spanned by the first ten--eleven eigenvectors displayed in \Cref{Fig4b}. 
However, if the unknown Dirichlet data on $\Gamma_1$ is not well approximated by these so-called stable eigenvectors, then its reconstruction from noisy measurements may turn out to be inaccurate.

\Cref{Fig7a} presents the color map on the inaccessible inner boundary $\Gamma_1$, retrieved from one sample drawn from the random vector $\left(\dfrac{1}{M_D} \displaystyle \sum_{i=1}^{M_D} \rho_{\mathbf{x}_i^D, \omega^1, \varepsilon, N}\big( \mathbf{x}_j^1 \big)\right)_{j= \overline{1,M_1}}$, for \Cref{ex:ex02}, whilst ten independent samples drawn from the harmonic density estimator $\rho_{\mathbf{x}_i^D, \omega^1, \varepsilon, N}\big( \mathbf{x}_j^1 \big)$, $j \in \big\{ 0, 4, 8, \ldots, 36 \big\}$, and their empirical mean are displayed in \Cref{Fig7b}. 
These two figures reconfirm that the inaccessible boundary $\Gamma_1$ is uniformly relevant for the locations of the internal measurements due to the rotation invariance of the domain. 
The remaining Figures~\ref{Fig7}(c)--(f) associated with \Cref{ex:ex02} illustrate ten independent samples drawn from the harmonic density estimator $\rho_{\mathbf{x}_i^D, \omega^1, \varepsilon, N}\big( \mathbf{x}_j^1 \big)$, $i \in \big\{ 0, 4, 8, \ldots, 36 \big\}$, and their empirical (\Cref{Fig7c}), the eigenvalues estimators $\widetilde{\lambda}_i^{\mathbf{\nu}}$, $i = \overline{1,M_D}$, for the direct operator $T^{\ast}_{\boldsymbol{\nu}} T_{\boldsymbol{\nu}}$ on a linear and a semi-logarithmic scales (\Cref{Fig7d,Fig7e}, respectively), and the eigenvectors estimators $\widetilde{u}_\ell$, $\ell = \overline{1,15}$, given by \eqref{eq:eigenfunctions_MC}, for the corresponding first fifteen eigenvectors of $T^{\ast}_{\boldsymbol{\nu}} T_{\boldsymbol{\nu}}$, retrieved at $\big( \mathbf{x}_j^1 \big)_{j = \overline{1,M_1}} \subset \Gamma_1$ (\Cref{Fig7f}).
The conclusions drawn for the results obtained for \Cref{ex:ex02} and presented in \Cref{Fig7} are similar to those retrieved for \Cref{ex:ex01} and displayed in \Cref{Fig01,Fig02}. However, \Cref{Fig7f} requires some additional comments. If one looks at Figures~\ref{Fig7}(a)--(e), it might seem that the corresponding ten samples have a large variance from their empirical mean. 
This phenomenon can be explained as follows. As derived in \cite{CiGrMaII}, the recovery of a particular eigenvector is very much conditioned by the multiplicity and the gap of its corresponding eigenvalue.
From \Cref{Fig7d} one can see that all eigenvalues except the first one seem to have multiplicity two, and hence one should really account for the recovery of the span of eigenvectors $2$ and $3$, the span of eigenvectors $4$ and $5$, etc.

In \Cref{Fig8be}, we test TSVD--Monte Carlo estimators derived in \Cref{s:3} for \Cref{ex:ex02} and the exact solution \eqref{eq:2D-sol2}. 
More precisely, ten independent samples drawn from the Monte Carlo estimator $u^{(r)}_{\omega^1, \varepsilon, N}$ of the corresponding SVD solution truncated at the $r-$th estimated eigenvalue, $r = \overline{1,25}$, at $\big( \mathbf{x}_j^1 \big)_{j = \overline{1, M_1}} \subset \Gamma_1$, retrieved for \Cref{ex:ex02}, their corresponding empirical mean and the exact solution \eqref{eq:2D-sol2} are displayed in \Cref{Fig8b}, whilst the corresponding absolute error in the reconstructed TSVD--Monte Carlo solution at the accessible internal points, $\big\vert u_i^D - \widehat{u}_i^{D,r} \big\vert, i = \overline{1, M_D}$, $r = \overline{1, 25}$, is presented in \Cref{Fig8e}. It can be concluded from \Cref{Fig8b,Fig8e} that although the exact solution \eqref{eq:2D-sol2} can be stably reconstructed within the subspace spanned by the first five eigenvectors of of the matrix operator $\Lambda_{\omega^1, \varepsilon, N}^{\boldsymbol{\nu}}$ and hence the estimators for the first five eigenfunctions of the direct operator $T^{\ast}_{\boldsymbol{\nu}} T_{\boldsymbol{\nu}}$, one cannot discriminate any of the twenty-five TSVD solutions illustrated in \Cref{Fig8b} without some discriminative {\it a priori} information if the measurement error is lager than $O(10^{-2})$.

The geometry considered in \Cref{ex:ex03} is similar to that of \Cref{ex:ex02}, however the inner boundary $\Gamma_1$ is a circle with a smaller radius than that of \Cref{ex:ex02}, and is therefore investigated along with the exact solution \eqref{eq:2D-sol2} for comparison purposes with \Cref{ex:ex02}. The numerical results retrieved for \Cref{ex:ex03} and \eqref{eq:2D-sol2}, i.e. the counterparts for the corresponding results obtained for \Cref{ex:ex02} and \eqref{eq:2D-sol2}, are presented in \Cref{Fig9,Fig10be}. 
As expected, in the case of \Cref{ex:ex03}, the corresponding eigenvalues estimators $\widetilde{\lambda}_i^{\mathbf{\nu}}$, $i = \overline{1,M_D}$, for the direct operator $T^{\ast}_{\boldsymbol{\nu}} T_{\boldsymbol{\nu}}$ depicted in \Cref{Fig9d} are significantly smaller than those corresponding to the more stable situation considered in \Cref{ex:ex02}, whilst the estimators of the fifteen TSVD solutions displayed in \Cref{Fig10b} fail to reconstruct the exact solution \eqref{eq:2D-sol2}. 
Nonetheless, any of the fifteen TSVD solutions illustrated in \Cref{Fig10e} matches the observed data at the internal locations with an absolute uniform error less then $O(10^{-4})$. 
Clearly, it is possible that certain exact solutions that are well represented by, e.g., the first fifteen eigenvectors depicted in \Cref{Fig9f}, may be stably recovered by the TSVD solutions. Such an example is given by \eqref{eq:2D-sol1} and its numerical reconstruction is provided in the extended version of this manuscript, see \cite{CiGrMaExtended}.

In \Cref{ex:ex04}, we consider a more complex geometry than that of Examples~\ref{ex:ex01}--\ref{ex:ex03}, namely a multiply connected two-dimensional domain given by a disc perforated by five smaller discs, together with the exact solution \eqref{eq:2D-sol3} and perform similar tests which are illustrated in \Cref{Fig11,Fig12cf}.  
However, instead of displaying ten independent samples for each of the estimators analysed so far, for simplicity, we decided to present a single one in this case. 
\Cref{Fig11a} shows that one can determine, in a quantifiable manner, the parts (red colour) of the inner boundaries that are more exposed to the location of the internal measurements (red points) have a significantly higher influence on the measurements than the rest of the inner circles (blue colour).
This information can be correlated with the results presented in \Cref{Fig12c}, where one sample drawn from the Monte Carlo estimator of the first fifteen TSVD solutions corresponding to the exact solution \eqref{eq:2D-sol3} is plotted.
It can be seen from \Cref{Fig12f} that the reconstruction of solution \eqref{eq:2D-sol3} in the red regions of the inner boundaries more accurate than in the blue ones.

In \Cref{ex:ex05}, we investigate the data assimilation problem for a harmonic function given by \eqref{eq:2D-sol1} in a two-dimensional simply connected domain, i.e. solely discrete Dirichlet measurements are available at some internal points marked by green dots in \Cref{Fig13a}, whilst $\Gamma_0 = \varnothing$, by using the TSVD--Monte Carlo approach proposed herein and performing essentially the same tests as for Examples~\ref{ex:ex02}--\ref{ex:ex03}.  
It can be seen from \Cref{Fig13a} that the direct operator $T^{\ast}_{\boldsymbol{\nu}} T_{\boldsymbol{\nu}}$ has a smoother effect in the blue regions than in the red ones and hence the capability of the inverse of $T^{\ast}_{\boldsymbol{\nu}} T_{\boldsymbol{\nu}}$ to recover a broader class of solutions is more limited in the blue region than in the red one. 
This phenomenon is also illustrated in \Cref{Fig14a}, where of the exact solution \eqref{eq:2D-sol1} is smoothed out in the blue regions by its the reconstruction via the estimators for the TSVD solutions. 
It is important to recall that the rectangular boundary is discretised in the counterclockwise sense starting from the bottom-right corner.

To investigate the influence of the anisotropy on the numerically reconstructed results provided by the probabilistic approach described in \Cref{s:3}, we consider yet again \Cref{ex:ex02} with the exact solution \eqref{eq:2D-sol4} and perform the same tests as those corresponding to \Cref{ex:ex02} and the exact solution \eqref{eq:2D-sol3}.
As expected, the elliptic measure on $\Gamma_1$ is slightly more concentrated along the principal eigenvector of the anisotropic conductivity matrix $\mathbf{K}$, see \Cref{Fig15a}, whilst the estimators for the elliptic densities are no longer stable under rotations, see \Cref{Fig15c}.
In case of \Cref{ex:ex02} and the anisotropic solution \eqref{eq:2D-sol3} and in contrast to the isotropic solution \Cref{ex:ex02}, the eigenvalue estimators for the direct operator $T^{\ast}_{\boldsymbol{\nu}} T_{\boldsymbol{\nu}}$ do not have multiplicities and the corresponding individual eigenfunction estimators can be properly identified, see \Cref{Fig7d,Fig15f}. 
It should also be noted that the first twenty eigenfunction estimators can be stably reconstructed and that they span a reach subspace where an unknown solution may be captured in a stable manner. 
As can be noticed from \Cref{Fig16a}, the reconstruction of the exact solution \eqref{eq:2D-sol4} for \Cref{ex:ex02} is accurate even when approximated in the space spanned by the first seven eigenfunction estimators of the direct operator $T^{\ast}_{\boldsymbol{\nu}} T_{\boldsymbol{\nu}}$ only. 
However, \Cref{Fig16b} shows that if the internal measurements are affected by noise of at least $O(10^{-3})$ in absolute value, then the empirical mean (black continuous line) of the ten independent samples drawn from the Monte Carlo estimator $u^{(r)}_{\omega^1, \varepsilon, N}$ of the corresponding SVD solution truncated at the $r-$th estimated eigenvalue, $r = \overline{1,25}$, at $\big( \mathbf{x}_j^1 \big)_{j = \overline{1, M_1}} \subset \Gamma_1$, displayed in \Cref{Fig16a}, fits the noisy measurements, as well as the corresponding exact solution (blue continuous line), presented in the very first and the very last subfigures of \Cref{Fig16a}.



\subsection{Three-dimensional examples} 
\label{ss:3Dexamples}

For the three-dimensional examples investigated herein, the $M_0$ boundary points $\big( \mathbf{x}_i^0 \big)_{i= \overline{1,M_0}} \subset \Gamma_0$ and $M_D$ internal points $\big( \mathbf{x}_i^D \big)_{i= \overline{1,M_D}} \subset D$, where measurements are taken, as well as the $M_1$ boundary points $\big( \mathbf{x}^1_i \big)_{i = \overline{1, M_1}} \subset \Gamma_1$, where the numerical solution is computed using the algorithm described in \Cref{ss:algorithm}, are chosen uniformly on the Bauer spherical spiral \cite{bauer2000}. The Cartesian coordinates $\big( x_k, y_k, z_k \big)$, $k = \overline{1,n}$, of $n$ points uniformly distributed on the Bauer spherical spiral of radius $R$ are given by \cite{bauer2000}
\begin{align*}
&x_k = R \, \sin{\phi_k} \, \cos{\theta_k}, \quad 
y_k = R \, \sin{\phi_k} \, \sin{\theta_k}, \quad 
z_k = R \, \cos{\phi_k},\\
&\phi_k = \arccos{\left(1 - \dfrac{2 k-1}{n}\right)}, \quad 
\theta_k = \sqrt{n \pi} \, \phi_k, \quad 
k = \overline{1,n}.
\end{align*}
In addition, we set $\nu_i = \dfrac{1}{\sqrt{M_D}}$, ${i=\overline{1,M_D}}$, and the weight functions $\big( \omega^1_i(\cdot) \big)_{i = \overline{1,M_1}}$ associated with $\big( \mathbf{x}^1_i \big)_{i = \overline{1,M_1}}$ are taken to be the corresponding extrinsic Voronoi weights, see \Cref{ex:A.2}.
 
\begin{ex} \label{ex:ex06}
Consider $D = D_0 \setminus \overline{D}_1 \subset \mathbb{R}^3$, where $D_\ell = \big\{ \mathbf{x} \in \mathbb{R}^3 \: \big| \: \Vert \mathbf{x} - \mathbf{x}^{(\ell)} \Vert < R_\ell \big\}$ and $\mathbf{x}^{(\ell)} = (0,0,0)$, $\ell = 0, 1$, $R_0 = 1$ and $R_1 = 0.5$, while $\Gamma_\ell = \partial D_\ell$, $\ell = 0, 1$, and $\Gamma_D = \big\{ \mathbf{x} \in D \: \big| \: {\rm dist}(\mathbf{x}, \Gamma_0) = 0.05 \big\}$. Here, we set $M_0 = 1000$, $M_1 = 100$, $M_D = 100$, $N = 10^5$ Monte Carlo samples, and $\varepsilon = 10^{-10}$.
\end{ex}

\begin{ex} \label{ex:ex07}
Let $D = D_0 \setminus \overline{D}_1 \subset \mathbb{R}^3$, where $D_\ell = \big\{ \mathbf{x} \in \mathbb{R}^3 \: \big| \: \Vert \mathbf{x} - \mathbf{x}^{(\ell)} \Vert < R_\ell \big\}$, $\ell = 0, 1$, $\mathbf{x}^{(0)} = (0,0,0)$, $R_0 = 1$, $\mathbf{x}^{(1)} = (0.3,0,0)$ and $R_1 = 0.5$, while $\Gamma_\ell = \partial D_\ell$, $\ell = 0, 1$, and $\Gamma_D = \big\{ \mathbf{x} \in D \: \big| \: {\rm dist}(\mathbf{x}, \Gamma_0) = 0.05 \big\}$. Here, $M_0 = 1000$, $M_1 = 100$, $M_D = 100$, $N = 10^5$ Monte Carlo samples, and $\varepsilon = 10^{-10}$.
\end{ex}

For Examples~\ref{ex:ex06}--\ref{ex:ex07}, we also consider three exact solutions in the isotropic case $\mathbf{K} = \mathbf{I}_3$, namely
\begin{subequations} \label{eq:3D-sol}
\begin{align}
\label{eq:3D-sol1}
&u^{\rm (ex)}(x_1,x_2,x_3) = x_1 x_2 + x_2^2 - x_3^2, 
\quad \mathbf{x} = (x_1,x_2,x_3) \in \overline{D}\\
\label{eq:3D-sol2}
&u^{\rm (ex)}(x_1,x_2,x_3) = \dfrac{1}{\sqrt{x_1^2 + x_2^2 + x_3^2}}, 
\quad \mathbf{x} = (x_1,x_2,x_3) \in \overline{D}\\
\label{eq:3D-sol3}
&u^{\rm (ex)}(x_1,x_2,x_3) = \dfrac{3}{4} (1 - x_1^2) + \dfrac{1}{4} x_2^3, 
\quad \mathbf{x} = (x_1,x_2,x_3) \in \overline{D}.
\end{align}
\end{subequations}


For a better illustration of the Monte Carlo estimate of the harmonic densities averaged over their poles, in case of \Cref{ex:ex06}, i.e. the three-dimensional analogue of the two-dimensional \Cref{ex:ex02}, the color map of a single sample drawn from the aforementioned estimate is represented on the inaccessible part $\Gamma_1$ only, see \Cref{Fig17a}, more precisely, on the inner spherical surface discretised by points chosen uniformly on the Bauer spiral. 
Interestingly, \Cref{Fig17d} shows that the multiplicities of the estimated eigenvalues of the direct operator $T^{\ast}_{\boldsymbol{\nu}} T_{\boldsymbol{\nu}}$ are given by $2 k - 1$, $k \in \mathbb{Z}_+$.  
In particular, the eigenvectors estimated in \Cref{Fig17f} should not be regarded individually, but rather grouped as the first eigenvector, the following three ones, the next five ones, etc.
By comparing \Cref{Fig7d,Fig17d}, it can be noticed that the estimated eigenvalues of the direct operator $T^{\ast}_{\boldsymbol{\nu}} T_{\boldsymbol{\nu}}$ and their corresponding gaps are significantly smaller in case of \Cref{ex:ex06} than those corresponding to its two-dimensional analogue given by \Cref{ex:ex02}.
Consequently, we may conclude that when passing from two- to three-dimensions, the direct operator becomes more unstable under inversion and this can be quantified. 
In particular, it becomes more difficult to recover accurately many eigenvectors and this may clearly be noticed by comparing \Cref{Fig7f,Fig17f}. 

Nonetheless, the corresponding estimated eigenspace for \Cref{ex:ex06} is sufficiently rich to reconstruct accurately the exact solution \eqref{eq:3D-sol1} (see \Cref{Fig18a}), but not rich enough to recover accurately the exact solution \eqref{eq:3D-sol2}, which actually is constant (see \Cref{Fig18b}). In the latter case, it is expected that taking more Monte Carlo samples would reduce the variance of the estimators for the eigenvectors, as well as the variance of the estimators for the corresponding TSVD solutions. 
In case of \Cref{ex:ex06}, the corresponding absolute errors in the reconstructed TSVD--Monte Carlo solutions, corresponding to the exact ones \eqref{eq:3D-sol1} and \eqref{eq:3D-sol2}, at the accessible internal points, $\big\vert u_i^D - \widehat{u}_i^{D,r} \big\vert$, $i = \overline{1, M_D}$, $r = \overline{1, 30}$, are presented in \Cref{Fig18d,Fig18e}, respectively.

We finally consider \Cref{ex:ex07} which is similar to \Cref{ex:ex06}, except that the interior sphere is shifted by $0.3$ along the $x-$coordinate. 
This change in the geometry turns out to make the inverse problem investigated herein more stable. 
For example, the estimated eigenvalues for \Cref{ex:ex07} and presented in \Cref{Fig19d} are simple and larger in comparison to those retrieved for \Cref{ex:ex06} and displayed in \Cref{Fig17d}. 
Also, the corresponding estimated eigenvectors depicted in \Cref{Fig19f} are better identified and exhibit smaller variance in comparison to those obtained for \Cref{ex:ex06} and presented in \Cref{Fig17f}.

For the sake of simplicity, we consider yet again the recovery of a single exact solution, namely that given by \eqref{eq:3D-sol2}, for \Cref{ex:ex07}. On the one hand, this exact solution is no longer constant on $\Gamma_1$ and, on the other hand, its reconstruction in the subspace spanned by the first 15--20 eigenvectors is more accurate for \Cref{ex:ex07} than for \Cref{ex:ex06}, see \Cref{Fig20b}. 
Also, \Cref{Fig20e} indicates that the exact solution could be restored in the aforementioned subspace as long as the noise of the interior measurements is at most of $O(10^{-2})$ in absolute value.



\section{Conclusions} \label{section:conclusions}
In this paper, Monte Carlo-based methods for the numerical analysis of the severely ill-posed inverse problem $\eqref{eq:ICP}$ or, more precisely, its corresponding discrete version \eqref{eq:insidemeasurements} have been developed. These methods cover several fundamental aspects that, surprisingly, have not been explored in the literature as yet, namely
\begin{enumerate}[label={\rm (}{\it \alph*}{\rm )}]
\setlength\itemsep{1pt}
\item The densities of the elliptic measures on the inaccessible boundary $\Gamma_1$, with poles prescribed in the domain $D$, have been estimated numerically, whilst numerical coloured maps displaying the possibly non-uniform instability/smoothing features on $\Gamma_1$, for the prescribed locations of the internal measurements, have also been provided.
\item The spectrum of the symmetrised direct operator associated with the discrete inverse problem \eqref{eq:insidemeasurements} has been estimated and, consequently, the ill-posedness of \eqref{eq:insidemeasurements} could be quantified numerically.
\item Not only a solution to the discrete inverse problem \eqref{eq:insidemeasurements}, but also a spectrum of solutions to \eqref{eq:insidemeasurements} has been reconstructed using the SVD of the random matrix that approximates the direct operator under investigation.
\item Memory efficient, meshfree, parallel, and easy to be implemented on GPU algorithms have been proposed. These algorithms have also shown that the probabilistic methods employed therein have very promising capabilities of representing the direct operator and its spectrally truncated inverse. 
\item The numerical method and the corresponding algorithm presented herein are also supported, in a second paper \cite{CiGrMaII}, by a comprehensive and systematic convergence and stability analysis.
\end{enumerate}

Further developments of the numerical methods proposed herein are the following: 
\begin{enumerate}[label={\rm (}{\it \roman*}{\rm )}]
\setlength\itemsep{1pt}
\item Extend the proposed Monte Carlo-based methods to non-homogeneous conductivity tensors. 
\item Investigate whether the techniques developed herein can be efficiently used not only for solving the inverse problem $\eqref{eq:ICP}$ or \eqref{eq:insidemeasurements}, but also for optimising the measurements locations in order to overcome and amelirate as much as possible the instability of the inverse problem.
\item Enhance or upgrade the methods proposed in this study by accounting for {\it a priori} information on the unknown solution, provided this is available.
\end{enumerate} 

\appendix
\section{Appendix: Weight functions and interpolation of discrete boundary data} \label{appendix}

Given a bounded domain $D \subset \mathbb{R}^d$ with the boundary $\partial D$ and $\varepsilon > 0$, we consider the corresponding $\varepsilon-$shells defined as
\begin{equation}\label{defi:shell}
D_\varepsilon \coloneqq 
\big\{\mathbf{x} \in D \; \big| \; {\rm dist}(\mathbf{x},\partial D) < \varepsilon \big\}, 
\quad 
\overline{D}_\varepsilon \coloneqq 
\big\{\mathbf{x} \in \overline{D} \; \big| \; {\rm dist}(\mathbf{x},\partial D) < \varepsilon \big\}.
\end{equation}
To a given finite set of points $\big( \mathbf{x}_i \big)_{i = \overline{1,n}} \subset \partial D$, we associate the following weight functions on $\overline{D}_\varepsilon$ 
\begin{equation}\label{eq:weights}
\begin{aligned}
&\omega_i^\varepsilon : \overline{D}_\varepsilon \longrightarrow [0,1], 
\quad \omega_i^\varepsilon(\mathbf{x}_j) = \delta_{ij},~i,j = \overline{1,n}, 
\quad \sum_{i=1}^{n} \omega_i^\varepsilon(\mathbf{x}) = 1,~\mathbf{x} \in \overline{D}_\varepsilon.
\end{aligned}
\end{equation}
The $\omega^\varepsilon-$interpolant on $\overline{D}_\varepsilon$ at points $(\mathbf{x}_i)_{i = \overline{1,n}} \subset \partial D$ of a given function $f: \partial D \longrightarrow \mathbb{R}$ is defined by
\begin{align}\label{eq:interpolant}
&f_{\omega^\varepsilon} : \overline{D}_\varepsilon \longrightarrow \mathbb{R}, 
\quad f_{\omega^\varepsilon}(\mathbf{x}) 
= \sum_{i=1}^n \omega_i^\varepsilon (\mathbf{x}) f(\mathbf{x}_i), \quad \mathbf{x} \in \overline{D}_\varepsilon.
\end{align}

\begin{ex}[{\bf Inverse distance weighting}]
\label{ex:A.1}
An easy way to construct Lipschitz weights defined directly on $\overline{D}_\varepsilon$ is to consider 
\begin{equation}\label{eq:IDW}
\omega^\varepsilon_i(\mathbf{x}) \sim \left(\frac{\max(0,R_i - \Vert \mathbf{x}-\mathbf{x}_i \Vert)}{R_i \Vert \mathbf{x}-\mathbf{x}_i \Vert}\right)^p, 
\quad \mathbf{x}\in \overline{D}_\varepsilon, 
\quad i = \overline{1, M},
\end{equation}
where $\big( \mathbf{x}_i \big)_{i = \overline{1, M}} \subset \partial D$ are prescribed, whilst $R_i > 0$ is the radius controlling ${\rm supp}\big( \omega^\varepsilon_i \big)$, $i = \overline{1, M}$. 
\end{ex}

\begin{ex}[{\bf Extrinsic Voronoi weights}]
\label{ex:A.2}
Suppose that $\big( \mathbf{x}_i \big)_{i = \overline{1,n}} \subset \partial D$ are given and consider the Voronoi diagrams on $\mathbb{R}^d$
\begin{equation}\label{eq:voronoi_ext}
\widetilde{V}_i \coloneqq \widetilde{V}(\mathbf{x}_i) 
= \big\{ \mathbf{x} \in \mathbb{R}^d \; \big| \; \Vert \mathbf{x} - \mathbf{x}_i \Vert  
= \inf_{j = \overline{1,n}} \Vert \mathbf{x} - \mathbf{x}_j \Vert \big\}, 
\quad i = \overline{1,n}.
\end{equation}
Since $\widetilde{V}_i$, $i = \overline{1,n}$ may be overlapping, for the sake of rigour, we consider the following partition
\begin{equation} \label{eq:disjoint voronoi_ext}
V_i \coloneqq \widetilde{V}_i \setminus \bigcup\limits_{j = 1}^{i-1} \widetilde{V}_j, 
\quad \bigcup\limits_{i = 1}^n V_i = \mathbb{R}^d.
\end{equation}
The induced weights on $\overline{D}_\varepsilon$ are given by
\begin{equation}\label{eq:voronoi weights_ext}
\omega_i^{\varepsilon} (\mathbf{x}) = \mathbf{1}_{V_i\cap \overline{D}_\varepsilon}(\mathbf{x}), 
\quad \mathbf{x}\in \overline{D}_{\varepsilon}, 
\quad i = \overline{1,n}.
\end{equation}
Hence the corresponding $\omega^\varepsilon-$interpolant on $\overline{D}_\varepsilon$ at points $(\mathbf{x}_i)_{i = \overline{1,n}} \subset \partial D$ of $f$, defined by \eqref{eq:interpolant}, is just the piecewise constant interpolant of $f$ given by
\begin{equation}\label{eq: f omega eps voronoi_ext}
f_{\omega^\varepsilon}(\mathbf{x}) = f(\mathbf{x_i}), 
\quad \mathbf{x} \in V_i^\varepsilon, 
\quad i = \overline{1,n}. 
\end{equation}
\end{ex}

Sometimes, it is more natural to define the weight functions on $\partial D$ and not inside the domain $D$. 
However, for the methodology presented herein, it is important to be able to extend this construction to a neighbourhood $\overline{D}_\varepsilon$ of $\partial D$ as considered above. 
This can always be achieved by means of the projection of a point $\mathbf{x} \in \overline{D}$ onto $\partial D$, denoted by $\pi(\mathbf{x})$. 
Recall that if $D$ has a sufficiently smooth boundary, then $\pi$ is single-valued and regular at least in a neighbourhood of $\partial D$, see e.g. \cite[Section 14.6]{GiTr01}.
However, such a projection is not well-defined in general, in the sense that it could be set-valued. 
Nonetheless, we can and further shall define
\begin{equation}\label{eq:projector on boundary}
\pi : \overline{D} \longrightarrow \partial D
\end{equation}
to be any fixed measurable selection of the possibly set-valued mapping
\begin{equation}\label{eq:set_valued}
\mathbf{x} \in \overline{D} \longmapsto 
\big\{ \mathbf{y}\in \partial D \; \big| \; \Vert \mathbf{x} - \mathbf{y} \Vert = {\rm dist}(\mathbf{x}, \partial D) \big\}.
\end{equation}
Note that such a measurable selection always exists by a direct application of \cite[Corollary 10.3]{EK09}.

\begin{ex}[{\bf Intrinsic Voronoi weights}] 
\label{ex:A.3}
Suppose that $\big( \mathbf{x}_i \big)_{i = \overline{1,n}} \subset \partial D$ are given and denote by ${\rm dist}_{\partial D}(\cdot,\cdot)$ the intrinsic distance on $\partial D$. 
Consider the Voronoi diagrams on $\partial D$
\begin{equation}\label{eq:voronoi}
\widetilde{V}_i 
\coloneqq \widetilde{V}(\mathbf{x}_i) 
= \Big\{ \mathbf{x} \in \partial{D} \; \Big| \; {\rm dist}_{\partial D}(\mathbf{x}, \mathbf{x}_i) 
= \inf_{j = \overline{1,n}} {\rm dist}_{\partial D}(\mathbf{x}, \mathbf{x}_j) \Big\}, 
\quad i = \overline{1,n}.
\end{equation}
Note, yet again, that $\widetilde{V}_i$, $i = \overline{1,n}$, may be overlapping, and for the sake of rigour, we consider the following partition
\begin{equation}\label{eq:disjoint voronoi}
V_i \coloneqq \widetilde{V_i} \setminus \bigcup\limits_{j=1}^{i-1} \widetilde{V}_j, 
\quad \bigcup\limits_{i=1}^n V_i = \partial D.
\end{equation}
The induced weights on $\partial D$ are given by
\begin{equation}\label{eq:voronoi weights}
\omega_i(\mathbf{x}) = \mathbf{1}_{V_i}(\mathbf{x}), \quad i= \overline{1,n},
\end{equation}
provides one with the extended weight functions on $\overline{D}_{\varepsilon}$ defined by
\begin{equation}\label{eq:voronoi weights extended}
\omega_i^\varepsilon(\mathbf{x}) = \mathbf{1}_{V_i}\big( \pi(\mathbf{x}) \big), 
\quad \mathbf{x}\in \overline{D}_\varepsilon, 
\quad i = \overline{1,n}.
\end{equation}
Clearly, the corresponding $\omega^\varepsilon-$interpolant on $\overline{D}_\varepsilon$ at points $(\mathbf{x}_i)_{i = \overline{1,n}} \subset \partial D$ of $f$, defined by \eqref{eq:interpolant}, is just the piecewise constant interpolant of $f$ given by
\begin{equation}\label{eq: f omega eps voronoi}
f_{\omega^\varepsilon}(\mathbf{x}) = f(\mathbf{x_i}), 
\quad \mathbf{x} \in V_i^\varepsilon \coloneqq \pi^{-1}(V_i) \cap \overline{D}_\varepsilon, 
\quad i = \overline{1,n}. 
\end{equation}
\end{ex}


\bigskip
\noindent \textbf{Acknowledgements.} I.C. acknowledges support from the Ministry of Research, Innovation and Digitization (Romania), grant CF-194-PNRR-III-C9-2023.

\addcontentsline{toc}{section}{\refname}
\bibliographystyle{abbrv}

\begin{thebibliography}{100}

\bibitem{abouladdotiotach2008missing}
R.~Aboula\"ich, A.~Ben~Abda, and M.~Kallel.
\newblock Missing boundary data reconstruction via an approximate optimal control.
\newblock {\em Inverse Probl. Imaging}, 2(4):411--426, 2008.

\bibitem{alessandrini1993stable}
G.~Alessandrini.
\newblock Stable determination of a crack from boundary measurements.
\newblock {\em Proc. Royal Soc. Edinburgh A: Math.}, 123(3):497--516, 1993.

\bibitem{al19}
G.~Alessandrini, M.~V. de~Hoop, F.~Faucher, R.~Gaburro, and E.~Sincich.
\newblock Inverse problem for the {H}elmholtz equation with {C}auchy data: reconstruction with conditional well-posedness driven iterative regularization.
\newblock {\em ESAIM: Math. Model. Numer. Anal.}, 53(3):1005--1030, 2019.

\bibitem{Al09}
G.~Alessandrini, L.~Rondi, E.~Rosset, and S.~Vessella.
\newblock The stability for the {C}auchy problem for elliptic equations.
\newblock {\em Inverse Probl.}, 25(12):123004, 47, 2009.

\bibitem{al12}
O.~M. Alifanov.
\newblock {\em Inverse heat transfer problems}.
\newblock Springer Science \& Business Media, 2012.

\bibitem{andrieux2005data}
S.~Andrieux, A.~B. Abda, and T.~N. Baranger.
\newblock Data completion via an energy error functional.
\newblock {\em C.R. M{\'e}canique}, 333(2):171--177, 2005.

\bibitem{ref28andrieux}
S.~Andrieux and T.~N. Baranger.
\newblock An energy error-based method for the resolution of the {C}auchy problem in 3{D} linear elasticity.
\newblock {\em Comput. Meth. Appl. Mech. Eng.}, 197(9-12):902--920, 2008.

\bibitem{andrieux2006solving}
S.~Andrieux, T.~N. Baranger, and A.~Ben~Abda.
\newblock Solving {C}auchy problems by minimizing an energy-like functional.
\newblock {\em Inverse Probl.}, 22(1):115--133, 2006.

\bibitem{AnAb96}
S.~Andrieux and A.~Ben~Abda.
\newblock Identification of planar cracks by complete overdetermined data: inversion formulae.
\newblock {\em Inverse Probl.}, 12(5):553--563, 1996.

\bibitem{go02}
D.~D. Ang, R.~Gorenflo, V.~K. Le, and D.~D. Trong.
\newblock {\em Moment theory and some inverse problems in potential theory and heat conduction}, volume 1792 of {\em Lecture Notes in Mathematics}.
\newblock Springer-Verlag, Berlin, 2002.

\bibitem{azaiez2011finite}
M.~Aza\"iez, F.~Ben~Belgacem, D.~T. Du, and F.~Jelassi.
\newblock A finite element model for the data completion problem: analysis and assessment.
\newblock {\em Inverse Probl. Sci. Eng.}, 19(8):1063--1086, 2011.

\bibitem{AzBeEl06}
M.~Aza\"{\i}ez, F.~Ben~Belgacem, and H.~El~Fekih.
\newblock On {C}auchy's problem. {II}. {C}ompletion, regularization and approximation.
\newblock {\em Inverse Probl.}, 22(4):1307--1336, 2006.

\bibitem{ref29baranger}
T.~N. Baranger and S.~Andrieux.
\newblock An optimization approach for the {C}auchy problem in linear elasticity.
\newblock {\em Struct. Multidisc. Optim.}, 35(2):141--152, 2008.

\bibitem{baravdish2018iterative}
G.~Baravdish, I.~Borachok, R.~Chapko, B.~T. Johansson, and M.~Slodi{\v{c}}ka.
\newblock An iterative method for the \uppercase{C}auchy problem for second-order elliptic equations.
\newblock {\em Int. J. Mech. Sci.}, 142:216--223, 2018.

\bibitem{bauer2000}
R.~Bauer.
\newblock Distribution of points on a sphere with application to star catalogs.
\newblock {\em J. Guidance, Control, Dynamics}, 23(1):130--137, 2000.

\bibitem{belgacem2011extended}
F.~Ben~Belgacem, D.~T. Du, and F.~Jelassi.
\newblock Extended-domain-{L}avrentiev's regularization for the {C}auchy problem.
\newblock {\em Inverse Probl.}, 27(4):045005, 27, 2011.

\bibitem{ben2012local}
F.~Ben~Belgacem, D.~T. Du, and F.~Jelassi.
\newblock Local convergence of the {L}avrentiev method for the {C}auchy problem via a {C}arleman inequality.
\newblock {\em J. Sci. Comput.}, 53(2):320--341, 2012.

\bibitem{Be05}
F.~Ben~Belgacem and H.~El~Fekih.
\newblock On {C}auchy's problem. {I}. {A} variational {S}teklov-{P}oincar\'{e} theory.
\newblock {\em Inverse Probl.}, 21(6):1915--1936, 2005.

\bibitem{belgacem2018analysis}
F.~Ben~Belgacem, V.~Girault, and F.~Jelassi.
\newblock Analysis of {L}avrentiev-finite element methods for data completion problems.
\newblock {\em Numer. Math.}, 139(1):1--25, 2018.

\bibitem{ben2022full}
F.~Ben~Belgacem, V.~Girault, and F.~Jelassi.
\newblock Full discretization of {C}auchy's problem by {L}avrentiev-finite element method.
\newblock {\em SIAM J. Numer. Anal.}, 60(2):558--584, 2022.

\bibitem{berntsson2001numerical}
F.~Berntsson and L.~Eld{\'e}n.
\newblock Numerical solution of a \uppercase{C}auchy problem for the \uppercase{L}aplace equation.
\newblock {\em Inverse Probl.}, 17(4):839, 2001.

\bibitem{ref16Berntsson}
F.~Berntsson, V.~A. Kozlov, L.~Mpinganzima, and B.~O. Turesson.
\newblock An accelerated alternating procedure for the {C}auchy problem for the {H}elmholtz equation.
\newblock {\em Comput. Math. Appl.}, 68(1-2):44--60, 2014.

\bibitem{ref12berntsson}
F.~Berntsson, V.~A. Kozlov, L.~Mpinganzima, and B.~O. Turesson.
\newblock An alternating iterative procedure for the {C}auchy problem for the {H}elmholtz equation.
\newblock {\em Inverse Probl. Sci. Eng.}, 22(1):45--62, 2014.

\bibitem{BCPZ}
L.~Beznea, I.~C\^{i}mpean, O.~Lupa\c{s}cu-Stamate, I.~Popescu, and A.~Z\u{a}rnescu.
\newblock From \uppercase{M}onte \uppercase{C}arlo to neural networks approximations of boundary value problems.
\newblock {\em https://arxiv.org/abs/2209.01432}, 2022.

\bibitem{BiBr12}
I.~Binder and M.~Braverman.
\newblock The rate of convergence of the walk on spheres algorithm.
\newblock {\em Geom. Funct. Anal.}, 22(3):558--587, 2012.

\bibitem{Ta10}
M.~Bossy, N.~Champagnat, S.~Maire, and D.~Talay.
\newblock Probabilistic interpretation and random walk on spheres algorithms for the poisson-boltzmann equation in molecular dynamics.
\newblock {\em ESAIM: Math. Model. Numer. Anal.}, 44(5):997--1048, 2010.

\bibitem{bo98}
L.~Bourgeois.
\newblock {\em Contr{\^o}le optimal et probl{\`e}mes inverses en plasticit{\'e}}.
\newblock PhD thesis, Palaiseau, Ecole Polytechnique, 1998.

\bibitem{BrHaPi01}
M.~Br\"{u}hl, M.~Hanke, and M.~Pidcock.
\newblock Crack detection using electrostatic measurements.
\newblock {\em ESAIM: Math. Model. Numer. Anal.}, 35(3):595--605, 2001.

\bibitem{bucataru2022gradient}
M.~Bucataru, I.~C\^impean, and L.~Marin.
\newblock A gradient-based regularization algorithm for the {C}auchy problem in steady-state anisotropic heat conduction.
\newblock {\em Comput. Math. Appl.}, 119:220--240, 2022.

\bibitem{bucataru2023stable}
M.~Bucataru, I.~Cimpean, and L.~Marin.
\newblock Stable reconstruction of discontinuous solutions to the {C}auchy problem in steady-state anisotropic heat conduction with non-smooth coefficients.
\newblock {\em ESAIM: Math. Model. Numer. Anal.}, 57(2):1029--1062, 2023.

\bibitem{ref17bucataru}
M.~Bucataru and L.~Marin.
\newblock Accelerated iterative algorithms for the {C}auchy problem in steady-state anisotropic heat conduction.
\newblock {\em Numer. Alg.}, 95(2):605--636, 2024.

\bibitem{Car39}
T.~Carleman.
\newblock Sur un probl\`eme d'unicit\'{e} pur les syst\`emes d'\'{e}quations aux d\'{e}riv\'{e}es partielles \`a deux variables ind\'{e}pendantes.
\newblock {\em Ark. Mat. Astr. Fys.}, 26(17):9, 1939.

\bibitem{caubet2020dual}
F.~Caubet and J.~Dard\'e.
\newblock A dual approach to {K}ohn-{V}ogelius regularization applied to data completion problem.
\newblock {\em Inverse Probl.}, 36(6):065008, 34, 2020.

\bibitem{chakib2006convergence}
A.~Chakib and A.~Nachaoui.
\newblock Convergence analysis for finite element approximation to an inverse {C}auchy problem.
\newblock {\em Inverse Probl.}, 22(4):1191--1206, 2006.

\bibitem{ChenZhao95}
Z.~Q. Chen and Z.~Zhao.
\newblock Diffusion processes and second order elliptic operators with singular coefficients for lower order terms.
\newblock {\em Math. Ann.}, 302(2):323--357, 1995.

\bibitem{ref32cimetiere}
A.~Cimeti\`ere, F.~Delvare, M.~Jaoua, and F.~Pons.
\newblock Solution of the {C}auchy problem using iterated {T}ikhonov regularization.
\newblock {\em Inverse Probl.}, 17(3):553--570, 2001.

\bibitem{clerc2007cortical}
M.~Clerc and J.~Kybic.
\newblock Cortical mapping by {L}aplace-{C}auchy transmission using a boundary element method.
\newblock {\em Inverse Probl.}, 23(6):2589--2601, 2007.

\bibitem{co08}
A.~F. Collar, M.~M. Castillo, and J.~O. Oliveros.
\newblock Inverse electroencephalography for volumetric sources.
\newblock {\em Math. Comput. Simulation}, 78(4):481--492, 2008.

\bibitem{colli1985mathematical}
P.~Colli-Franzone, L.~Guerri, S.~Tentoni, C.~Viganotti, S.~Baruffi, S.~Spaggiari, and B.~Taccardi.
\newblock A mathematical procedure for solving the inverse potential problem of electrocardiography. analysis of the time-space accuracy from in vitro experimental data.
\newblock {\em Math. Biosci.}, 77(1-2):353--396, 1985.

\bibitem{FrMa79}
P.~Colli~Franzone and E.~Magenes.
\newblock On the inverse potential problem of electrocardiology.
\newblock {\em Calcolo}, 16(4):459--538, 1979.

\bibitem{ref11comino}
L.~Comino, L.~Marin, and R.~Gallego.
\newblock An alternating iterative algorithm for the {C}auchy problem in anisotropic elasticity.
\newblock {\em Eng. Anal. Boundary Elem.}, 31(8):667--682, 2007.

\bibitem{CiGrMaExtended}
I.~Cîmpean, A.~Grecu, and L.~Marin.
\newblock Numerical spectral analysis of {C}auchy-type inverse problems: {A} probabilistic approach.
\newblock {\em https://arxiv.org/abs/2409.03686v1}, 2024.

\bibitem{CiGrMaII}
I.~Cîmpean, A.~Grecu, and L.~Marin.
\newblock A probabilistic approach to spectral analysis of {C}auchy-type inverse problems: {C}onvergence and stability analysis.
\newblock {\em arXiv}, 2025.

\bibitem{Dahlberg86}
B.~E.~J. Dahlberg.
\newblock On the absolute continuity of elliptic measures.
\newblock {\em Amer. J. Math.}, 108(5):1119--1138, 1986.

\bibitem{darde2015iterated}
J.~Dard\'e.
\newblock Iterated quasi-reversibility method applied to elliptic and parabolic data completion problems.
\newblock {\em Inverse Probl. Imaging}, 10(2):379--407, 2016.

\bibitem{DeGiorgi57}
E.~De~Giorgi.
\newblock Sulla differenziabilit\`a e l'analiticit\`a delle estremali degli integrali multipli regolari.
\newblock {\em Mem. Accad. Sci. Torino. Cl. Sci. Fis. Mat. Nat. (3)}, pages 25--43, 1957.

\bibitem{ref34delvare}
F.~Delvare, A.~Cimeti\`ere, J.-L. Hanus, and P.~Bailly.
\newblock An iterative method for the {C}auchy problem in linear elasticity with fading regularization effect.
\newblock {\em Comput. Meth. Appl. Mech. Eng.}, 199(49-52):3336--3344, 2010.

\bibitem{ref2engl}
H.~W. Engl, M.~Hanke, and A.~Neubauer.
\newblock {\em Regularization of inverse problems}, volume 375 of {\em Mathematics and its Applications}.
\newblock Kluwer Academic Publishers Group, Dordrecht, 1996.

\bibitem{EK09}
S.~N. Ethier and T.~G. Kurtz.
\newblock {\em Markov processes}.
\newblock John Wiley \& Sons, Inc., New York, 1986.

\bibitem{FaJeKe84}
E.~B. Fabes, D.~S. Jerison, and C.~E. Kenig.
\newblock Necessary and sufficient conditions for absolute continuity of elliptic-harmonic measure.
\newblock {\em Ann. of Math. (2)}, 119(1):121--141, 1984.

\bibitem{Fefferman89}
R.~Fefferman.
\newblock A criterion for the absolute continuity of the harmonic measure associated with an elliptic operator.
\newblock {\em J. Amer. Math. Soc.}, 2(1):127--135, 1989.

\bibitem{fr05}
A.~Fraguela, J.~Oliveros, M.~Mor\'in, and L.~Cervantes.
\newblock Inverse electroencephalography for cortical sources.
\newblock {\em Appl. Numer. Math.}, 55(2):191--203, 2005.

\bibitem{GaLi86}
N.~Garofalo and F.-H. Lin.
\newblock Monotonicity properties of variational integrals, {$A_p$} weights and unique continuation.
\newblock {\em Indiana Univ. Math. J.}, 35(2):245--268, 1986.

\bibitem{GiTr01}
D.~Gilbarg and N.~S. Trudinger.
\newblock {\em Elliptic partial differential equations of second order}.
\newblock Springer-Verlag, Berlin, 2001.

\bibitem{go23}
R.~Gong, M.~Wang, Q.~Huang, and Y.~Zhang.
\newblock A {CCBM}-based generalized {GKB} iterative regularization algorithm for inverse {C}auchy problems.
\newblock {\em J. Comput. Appl. Math.}, 432:Paper No. 115282, 26, 2023.

\bibitem{gr93}
C.~W. Groetsch and C.~Groetsch.
\newblock {\em Inverse problems in the mathematical sciences}, volume~52.
\newblock Springer, 1993.

\bibitem{GrWi82}
M.~Gr\"{u}ter and K.-O. Widman.
\newblock The {G}reen function for uniformly elliptic equations.
\newblock {\em Manuscripta Math.}, 37(3):303--342, 1982.

\bibitem{habbal2013neumann}
A.~Habbal and M.~Kallel.
\newblock Neumann-{D}irichlet {N}ash strategies for the solution of elliptic {C}auchy problems.
\newblock {\em SIAM J. Control Optim.}, 51(5):4066--4083, 2013.

\bibitem{Ha23}
J.~Hadamard.
\newblock {\em Lectures on Cauchy's problem in linear partial differential equations}.
\newblock New Haven Yale University Press, 1923.

\bibitem{ref18hao}
D.~N. H\`ao and D.~Lesnic.
\newblock The {C}auchy problem for {L}aplace's equation via the conjugate gradient method.
\newblock {\em IMA J. Appl. Math.}, 65(2):199--217, 2000.

\bibitem{idier2013bayesian}
J.~Idier.
\newblock {\em Bayesian approach to inverse problems}.
\newblock John Wiley \& Sons, 2013.

\bibitem{is06}
V.~Isakov.
\newblock {\em Inverse problems for partial differential equations}, volume 127 of {\em Applied Mathematical Sciences}.
\newblock Springer, New York, second edition, 2006.

\bibitem{JeKe81}
D.~S. Jerison and C.~E. Kenig.
\newblock The {D}irichlet problem in nonsmooth domains.
\newblock {\em Ann. of Math. (2)}, 113(2):367--382, 1981.

\bibitem{JIN06a}
B.~Jin and Y.~Zheng.
\newblock A meshless method for some inverse problems associated with the {H}elmholtz equation.
\newblock {\em Comput. Meth. Appl. Mech. Eng.}, 195(19-22):2270--2288, 2006.

\bibitem{jin2006method}
B.~Jin, Y.~Zheng, and L.~Marin.
\newblock The method of fundamental solutions for inverse boundary value problems associated with the steady-state heat conduction in anisotropic media.
\newblock {\em Int. J. Numer. Meth. Eng.}, 65(11):1865--1891, 2006.

\bibitem{jin2008bayesian}
B.~Jin and J.~Zou.
\newblock A {B}ayesian inference approach to the ill-posed {C}auchy problem of steady-state heat conduction.
\newblock {\em Int. J. Numer. Meth. Eng.}, 76(4):521--544, 2008.

\bibitem{johansson2004iterative}
T.~Johansson.
\newblock An iterative procedure for solving a {C}auchy problem for second order elliptic equations.
\newblock {\em Math. Nachr.}, 272:46--54, 2004.

\bibitem{ref24johansson}
T.~Johansson and D.~Lesnic.
\newblock Reconstruction of a stationary flow from incomplete boundary data using iterative methods.
\newblock {\em Eur. J. Appl. Math.}, 17(6):651--663, 2006.

\bibitem{ref13jourhmane}
M.~Jourhmane and A.~Nachaoui.
\newblock An alternating method for an inverse {C}auchy problem.
\newblock {\em Numer. Alg.}, 21(1-4):247--260, 1999.

\bibitem{ka95}
S.~I. Kabanikhin and A.~L. Karchevsky.
\newblock Optimizational method for solving the {C}auchy problem for an elliptic equation.
\newblock {\em J. Inverse Ill-Posed Probl.}, 3(1):21--46, 1995.

\bibitem{kaipio2011bayesian}
J.~P. Kaipio and C.~Fox.
\newblock The {B}ayesian framework for inverse problems in heat transfer.
\newblock {\em Heat Transfer Eng.}, 32(9):718--753, 2011.

\bibitem{KaSh14}
I.~Karatzas and S.~E. Shreve.
\newblock {\em Brownian motion and stochastic calculus}, volume 113 of {\em Graduate Texts in Mathematics}.
\newblock Springer-Verlag, New York, second edition, 1991.

\bibitem{ki11}
A.~Kirsch et~al.
\newblock {\em An introduction to the mathematical theory of inverse problems}, volume 120.
\newblock Springer, 2011.

\bibitem{Klibanov}
M.~V. Klibanov and F.~Santosa.
\newblock A computational quasi-reversibility method for {C}auchy problems for {L}aplace's equation.
\newblock {\em SIAM J. Appl. Math.}, 51(6):1653--1675, 1991.

\bibitem{Vaart11}
B.~T. Knapik, A.~W. van~der Vaart, and J.~H. van Zanten.
\newblock Bayesian inverse problems with {G}aussian priors.
\newblock {\em Ann. Statist.}, 39(5):2626--2657, 2011.

\bibitem{koshev2020fem}
N.~Koshev, N.~Yavich, M.~Malovichko, E.~Skidchenko, and M.~Fedorov.
\newblock F{EM}-based scalp-to-cortex {EEG} data mapping via the solution of the {C}auchy problem.
\newblock {\em J. Inverse Ill-Posed Probl.}, 28(4):517--532, 2020.

\bibitem{kozlov1991iterative}
V.~A. Kozlov, V.~G. Maz'ya, and A.~V. Fomin.
\newblock An iterative method for solving the {C}auchy problem for elliptic equations.
\newblock {\em Zh. Vychisl. Mat. i Mat. Fiz.}, 31(1):64--74, 1991.

\bibitem{KyOs18}
A.~E. Kyprianou, A.~Osojnik, and T.~Shardlow.
\newblock Unbiased `walk-on-spheres' {M}onte {C}arlo methods for the fractional {L}aplacian.
\newblock {\em IMA J. Numer. Anal.}, 38(3):1550--1578, 2018.

\bibitem{lattes1969method}
R.~Latt\`es and J.-L. Lions.
\newblock {\em The method of quasi-reversibility. {A}pplications to partial differential equations}, volume No. 18 of {\em Modern Analytic and Computational Methods in Science and Mathematics}.
\newblock American Elsevier Publishing Co., Inc., New York, french edition, 1969.

\bibitem{Le21}
D.~Lesnic.
\newblock {\em Inverse problems with applications in science and engineering}.
\newblock CRC Press, Boca Raton, FL, [2022] \copyright 2022.

\bibitem{ref8lesnic}
D.~Lesnic, L.~Elliott, and D.~Ingham.
\newblock An iterative boundary element method for solving numerically the {C}auchy problem for the {L}aplace equation.
\newblock {\em Eng. Anal. Boundary Elem.}, 20(2):123--133, 1997.

\bibitem{Stampacchia63}
W.~Littman, G.~Stampacchia, and H.~F. Weinberger.
\newblock Regular points for elliptic equations with discontinuous coefficients.
\newblock {\em Ann. Scuola Norm. Sup. Pisa Cl. Sci. (3)}, 17:43--77, 1963.

\bibitem{liu2008modified}
C.-S. Liu.
\newblock A modified collocation {T}refftz method for the inverse {C}auchy problem of {L}aplace equation.
\newblock {\em Eng. Anal. Boundary Elem.}, 32(9):778--785, 2008.

\bibitem{ma20}
M.~Malovichko, N.~Koshev, N.~Yavich, A.~Razorenova, and M.~Fedorov.
\newblock Electroencephalographic source reconstruction by the finite-element approximation of the elliptic {C}auchy problem.
\newblock {\em IEEE Trans. Biomedical Eng.}, 68(6):1811--1819, 2020.

\bibitem{ref26marin}
L.~Marin.
\newblock Boundary element-minimal error method for the {C}auchy problem associated with {H}elmholtz-type equations.
\newblock {\em Comput. Mech.}, 44(2):205--219, 2009.

\bibitem{ref25marin}
L.~Marin.
\newblock The minimal error method for the {C}auchy problem in linear elasticity. {N}umerical implementation for two-dimensional homogeneous isotropic linear elasticity.
\newblock {\em Int. J. Solids Struct.}, 46(5):957--974, 2009.

\bibitem{ref23marin}
L.~Marin.
\newblock Landweber-{F}ridman algorithms for the {C}auchy problem in steady-state anisotropic heat conduction.
\newblock {\em Math. Mech. Solids}, 25(6):1340--1363, 2020.

\bibitem{ref35marin}
L.~Marin, F.~Delvare, and A.~Cimetière.
\newblock Fading regularization {MFS} algorithm for inverse boundary value problems in two-dimensional linear elasticity.
\newblock {\em Int. J. Solids Struct.}, 78-79:9--20, 2016.

\bibitem{ref10marin}
L.~Marin, L.~Elliott, P.~J. Heggs, D.~B. Ingham, D.~Lesnic, and X.~Wen.
\newblock An alternating iterative algorithm for the {C}auchy problem associated to the {H}elmholtz equation.
\newblock {\em Comput. Meth. Appl. Mech. Eng.}, 192(5-6):709--722, 2003.

\bibitem{ref20marin}
L.~Marin, L.~Elliott, P.~J. Heggs, D.~B. Ingham, D.~Lesnic, and X.~Wen.
\newblock Conjugate gradient-boundary element solution to the {C}auchy problem for {H}elmholtz-type equations.
\newblock {\em Comput. Mech.}, 31(3-4):367--377, 2003.

\bibitem{ref9marin}
L.~Marin, L.~Elliott, D.~Ingham, and D.~Lesnic.
\newblock Boundary element method for the {C}auchy problem in linear elasticity.
\newblock {\em Eng. Anal. Boundary Elem.}, 25(9):783--793, 2001.

\bibitem{ref19marin}
L.~Marin, D.~N. H\`ao, and D.~Lesnic.
\newblock Conjugate gradient-boundary element method for the {C}auchy problem in elasticity.
\newblock {\em Quart. J. Mech. Appl. Math.}, 55(2):227--247, 2002.

\bibitem{ref15marin}
L.~Marin and B.~T. Johansson.
\newblock A relaxation method of an alternating iterative algorithm for the {C}auchy problem in linear isotropic elasticity.
\newblock {\em Comput. Meth. Appl. Mech. Eng.}, 199(49-52):3179--3196, 2010.

\bibitem{ref14marin}
L.~Marin and B.~T. Johansson.
\newblock Relaxation procedures for an iterative {MFS} algorithm for the stable reconstruction of elastic fields from {C}auchy data in two-dimensional isotropic linear elasticity.
\newblock {\em Int. J. Solids Struct.}, 47(25):3462--3479, 2010.

\bibitem{ref7marin}
L.~Marin, A.~Karageorghis, D.~Lesnic, and B.~T. Johansson.
\newblock The method of fundamental solutions for problems in static thermo-elasticity with incomplete boundary data.
\newblock {\em Inverse Probl. Sci. Eng.}, 25(5):652--673, 2017.

\bibitem{ref4marin}
L.~Marin and D.~Lesnic.
\newblock Boundary element solution for the {C}auchy problem in linear elasticity using singular value decomposition.
\newblock {\em Comput. Meth. Appl. Mech. Eng.}, 191(29):3257--3270, 2002.

\bibitem{ref5marin}
L.~Marin and D.~Lesnic.
\newblock The method of fundamental solutions for the {C}auchy problem in two-dimensional linear elasticity.
\newblock {\em Int. J. Solids Struct.}, 41(13):3425--3438, 2004.

\bibitem{ref22marin}
L.~Marin and D.~Lesnic.
\newblock Boundary element--{L}andweber method for the {C}auchy problem in linear elasticity.
\newblock {\em IMA J. Appl. Math.}, 70(2):323--340, 2005.

\bibitem{mera2000boundary}
N.~Mera, L.~Elliott, D.~Ingham, and D.~Lesnic.
\newblock The boundary element solution of the {C}auchy steady heat conduction problem in an anisotropic medium.
\newblock {\em Int. J. Numer. Meth. Eng.}, 49(4):481--499, 2000.

\bibitem{Moser60}
J.~Moser.
\newblock A new proof of {D}e {G}iorgi's theorem concerning the regularity problem for elliptic differential equations.
\newblock {\em Commun. Pure Appl. Math.}, 13:457--468, 1960.

\bibitem{Muller}
M.~E. Muller.
\newblock Some continuous {M}onte {C}arlo methods for the {D}irichlet problem.
\newblock {\em Ann. Math. Statist.}, 27:569--589, 1956.

\bibitem{Nash58}
J.~Nash.
\newblock Continuity of solutions of parabolic and elliptic equations.
\newblock {\em Amer. J. Math.}, 80:931--954, 1958.

\bibitem{Oksendal}
B.~{\O}ksendal.
\newblock {\em Stochastic differential equations}.
\newblock Universitext. Springer-Verlag, Berlin, 1985.

\bibitem{reinhardt1999stability}
H.-J. Reinhardt, H.~Han, and D.~N. H\`ao.
\newblock Stability and regularization of a discrete approximation to the {C}auchy problem for {L}aplace's equation.
\newblock {\em SIAM J. Numer. Anal.}, 36(3):890--905, 1999.

\bibitem{rischette2013regularization}
R.~Rischette, T.~N. Baranger, and S.~Andrieux.
\newblock Regularization of the noisy {C}auchy problem solution approximated by an energy-like method.
\newblock {\em Int. J. Numer. Meth. Eng.}, 95(4):271--287, 2013.

\bibitem{SaCr20}
R.~Sawhney and K.~Crane.
\newblock Monte carlo geometry processing: a grid-free approach to pde-based methods on volumetric domains.
\newblock {\em ACM Trans. Graph.}, 39(4), aug 2020.

\bibitem{st10}
A.~M. Stuart.
\newblock Inverse problems: a {B}ayesian perspective.
\newblock {\em Acta Numer.}, 19:451--559, 2010.

\bibitem{ta05}
A.~Tarantola.
\newblock {\em Inverse problem theory and methods for model parameter estimation}.
\newblock SIAM, 2005.

\bibitem{ref1tikhonov}
A.~N. Tikhonov and V.~Y. Arsenin.
\newblock {\em Methods for the solution of ill-posed problems}.
\newblock Nauka, Moscow, 1986.

\bibitem{voinea2021fading}
A.-P. Voinea-Marinescu and L.~Marin.
\newblock Fading regularization {MFS} algorithm for the {C}auchy problem in anisotropic heat conduction.
\newblock {\em Comput. Mech.}, 68(4):921--941, 2021.

\bibitem{We15}
D.~Weisz-Patrault.
\newblock Inverse {C}auchy method with conformal mapping: application to latent flatness defect detection during rolling process.
\newblock {\em Int. J. Solids Struct.}, 56:175--193, 2015.

\bibitem{Wiener24}
N.~Wiener.
\newblock The {D}irichlet problem.
\newblock {\em J. Math. Phys.}, 3(3):127--146, 1924.

\end{thebibliography}

\newpage
\section*{Figures} 
\label{section:figures}

\begin{figure}[H]
\centering
\subfigure[]{
\includegraphics[scale=0.75]{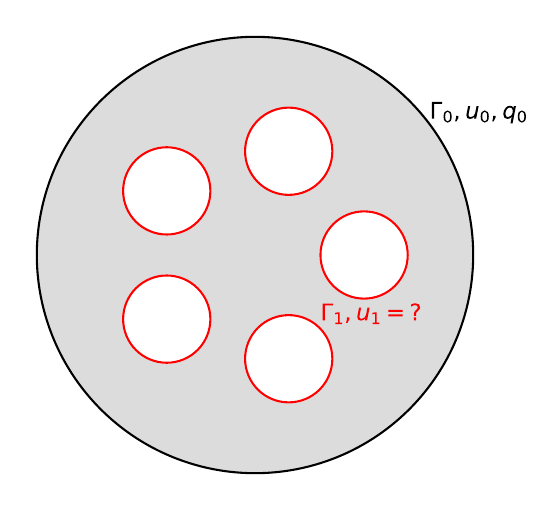}
\label{Fig00a}}
\subfigure[]{
\includegraphics[scale=0.65]{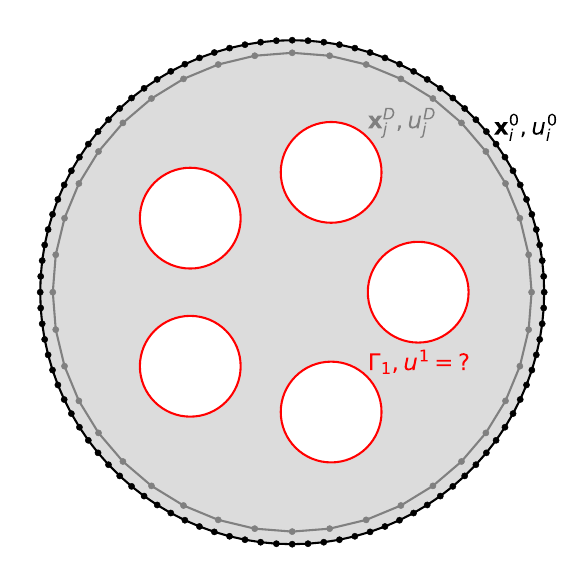}
\label{Fig00b}}
\caption[Figure]{\footnotesize Schematic diagram of \subref{Fig00a}~the continuous inverse Cauchy problem, and \subref{Fig00b}~the discrete version of its generalisation.}
\label{Fig00}
\end{figure}

\begin{figure}[H]
\centering
\subfigure[]{
\includegraphics[scale=0.6]{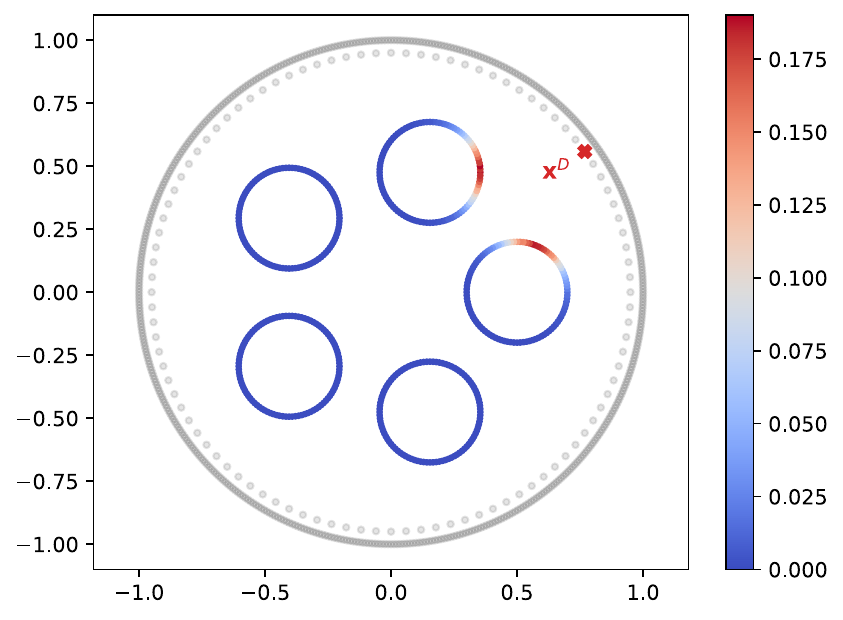}
\label{Fig001a}}
\subfigure[]{
\includegraphics[scale=0.6]{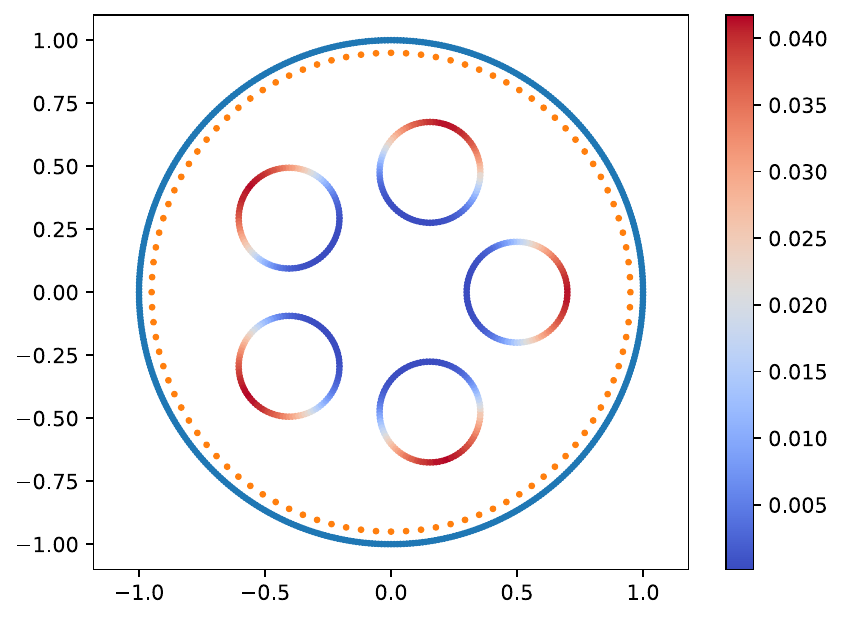}
\label{Fig001b}}
\caption[Figure]{\footnotesize Restriction to the inner boundaries $\Gamma_1$ of \subref{Fig001a}~the harmonic measure with the pole at a generic location $\mathbf{x}^D$, and \subref{Fig001b}~the averaged harmonic measures with the poles at all points represented by the dotted circle, for a disc perforated by five smaller discs, occupied by a two-dimensional isotropic material, $\mathbf{K} = \mathbf{I}_2$, with Dirichlet data on the outer boundary $\Gamma_0$ and discrete Dirichlet measurements on the dotted circle.}
\label{Fig001}
\end{figure}

\begin{figure}[H]
\centering
\subfigure[]{
\includegraphics[scale=0.45]{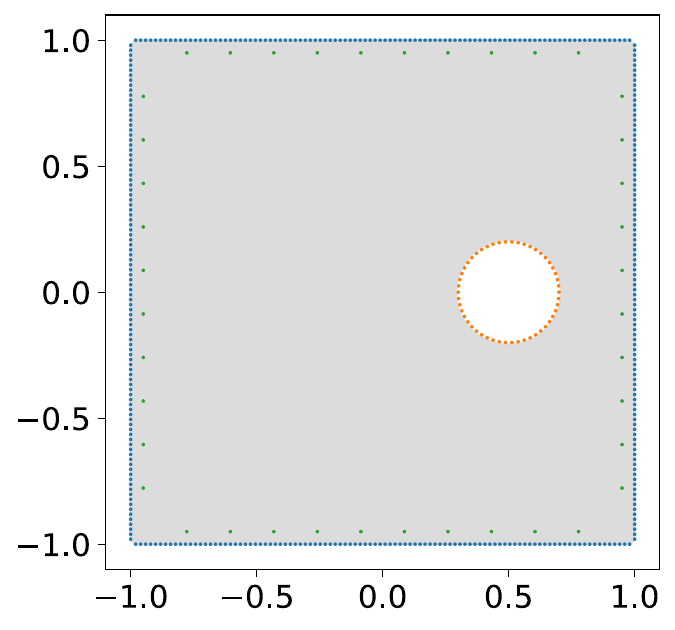}
\label{Fig1}}
\subfigure[]{
\includegraphics[scale=0.45]{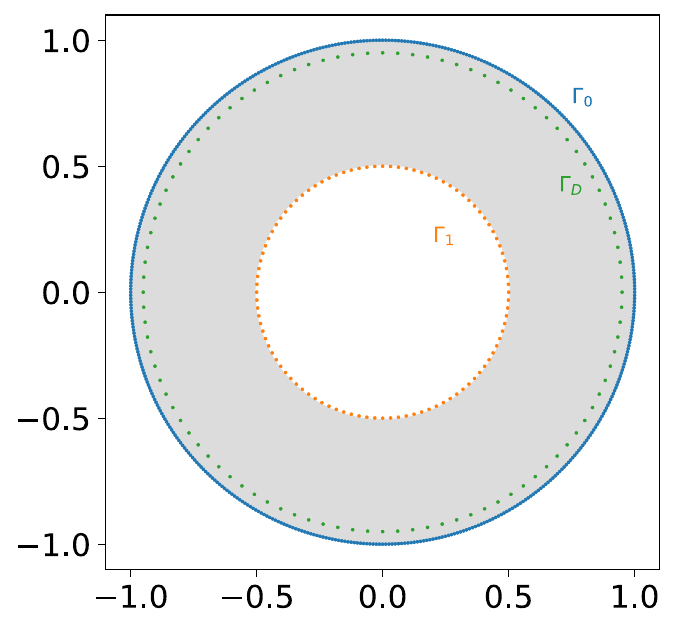}
\label{Fig6a}}
\subfigure[]{
\includegraphics[scale=0.45]{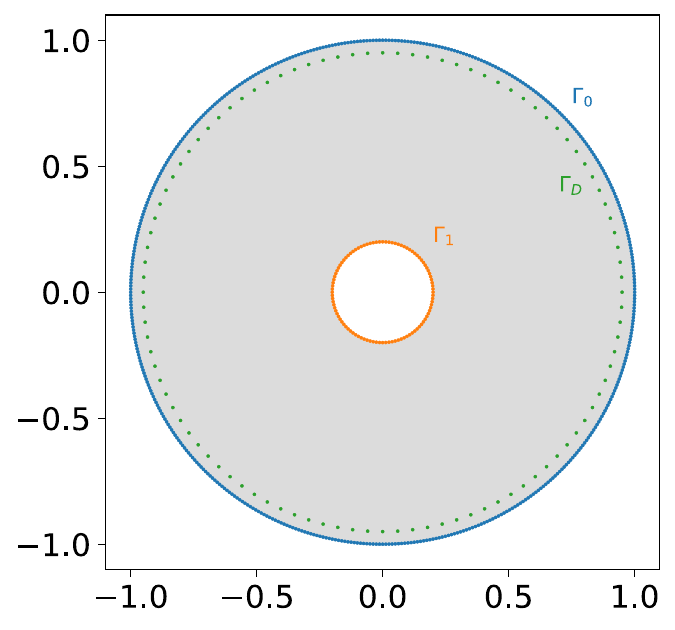}
\label{Fig6b}}
\subfigure[]{
\includegraphics[scale=0.45]{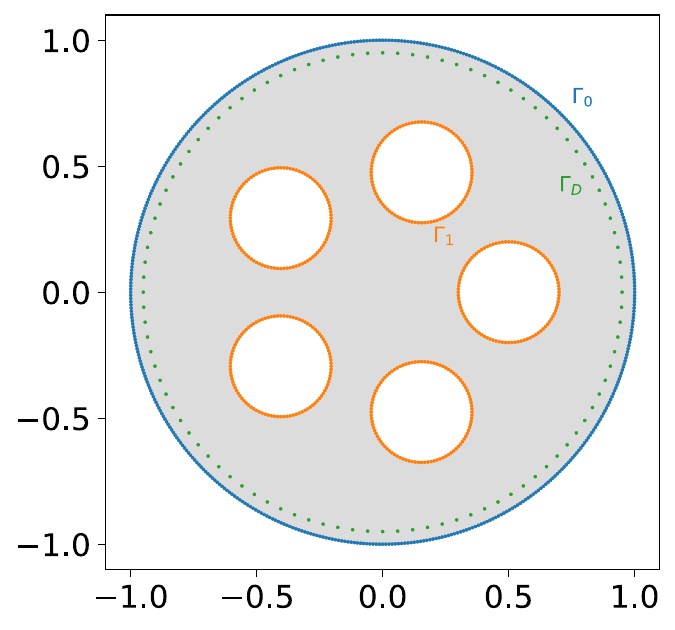}
\label{Fig6c}}
\subfigure[]{
\includegraphics[scale=0.45]{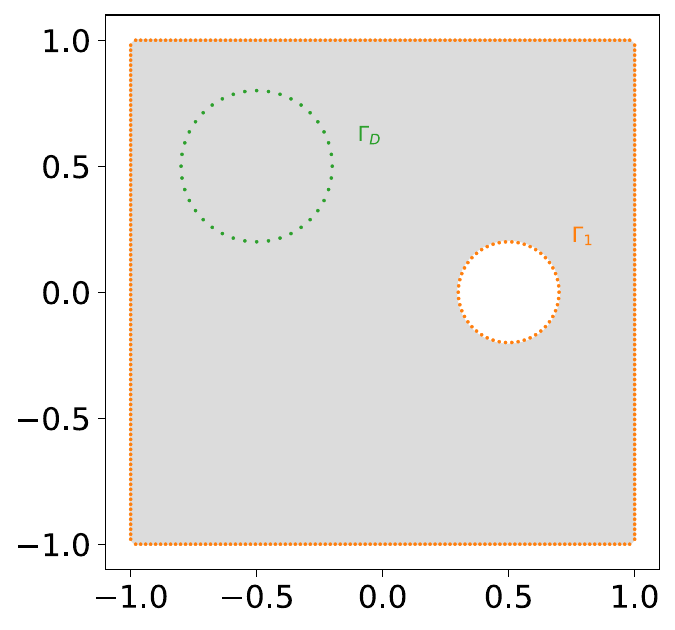}
\label{Fig6d}}
\vspace{-10pt}
\caption[Figure]{\footnotesize Schematic diagram of \subref{Fig1}~\Cref{ex:ex01}, \subref{Fig6a}~\Cref{ex:ex02}, \subref{Fig6b}~\Cref{ex:ex03}, \subref{Fig6c}~\Cref{ex:ex04}, and \subref{Fig6d}~\Cref{ex:ex05}, and the corresponding measurement points on the boundary $\big( \mathbf{x}_i^0 \big)_{i= \overline{1,M_0}} \subset \Gamma_0$ (blue dots) and in the domain $\big( \mathbf{x}_i^D \big)_{i= \overline{1,M_D}} \subset \Gamma_D \subset D$ (green dots), and points on the under-specified boundary $\big( \mathbf{x}^1_i \big)_{i = \overline{1, M_1}} \subset \Gamma_1$ (red dots).}
\label{Fig1-all}
\end{figure}

\begin{figure}[H]
\centering
\subfigure[]{
\includegraphics[scale=0.6]{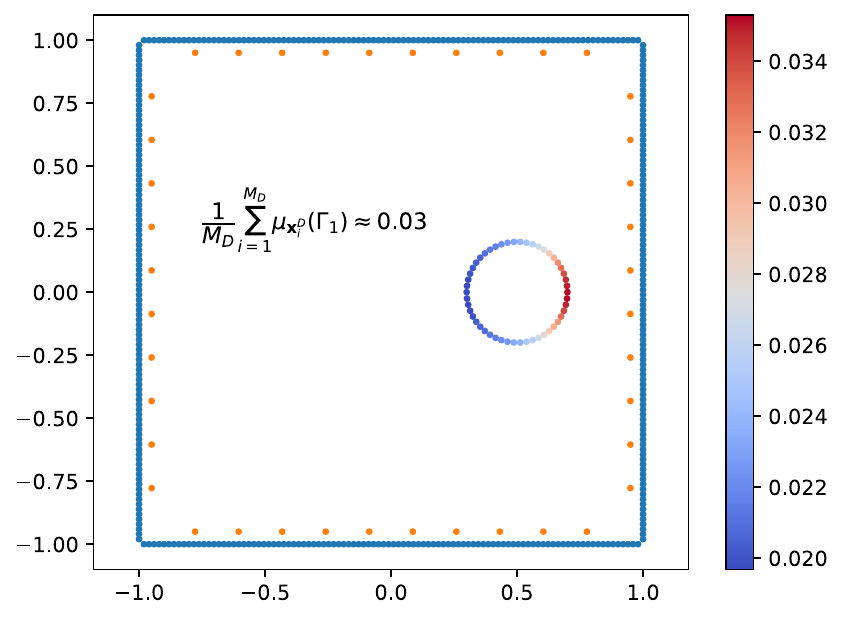}
\label{Fig1a}}
\subfigure[]{
\includegraphics[scale=0.6]{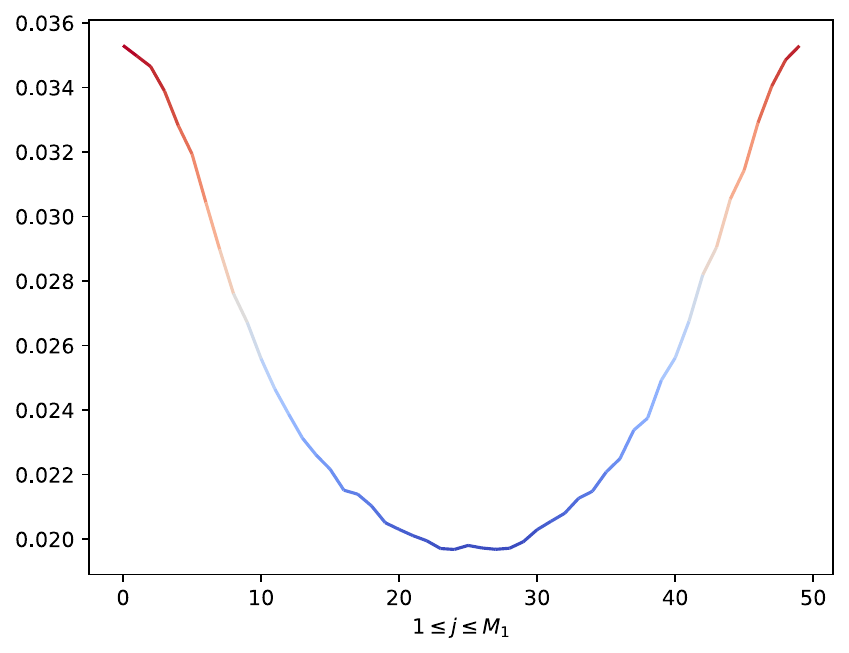}
\label{Fig1b}}
\subfigure[]{
\includegraphics[scale=0.6]{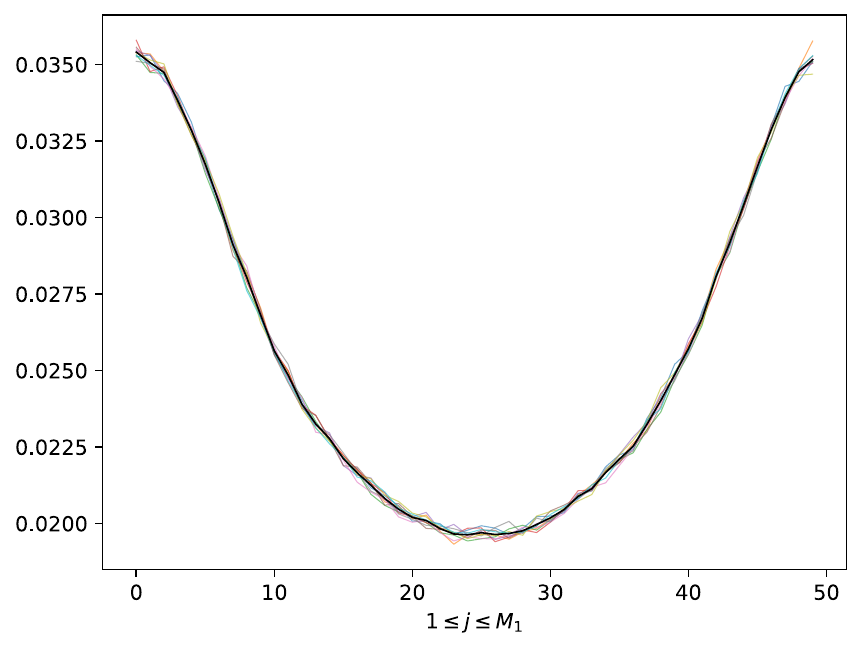}
\label{Fig1c}}
\subfigure[]{
\includegraphics[scale=0.6]{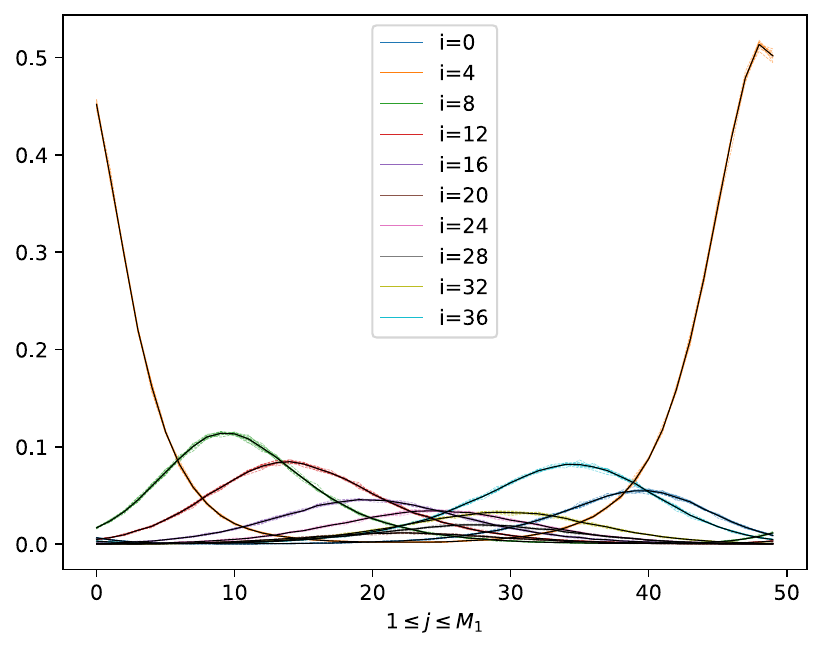}
\label{Fig1d}}
\vspace{-10pt}
\caption[Figure]{\footnotesize \Cref{ex:ex01}: \subref{Fig1a}--\subref{Fig1b}~One sample, and \subref{Fig1c}~Ten i.i.d. samples of $\dfrac{1}{M_D} \displaystyle \sum_{i=1}^{M_D} \rho_{\mathbf{x}_i^D, \omega^1, \varepsilon, N}\big( \mathbf{x}_j^1 \big)$, retrieved on $\Gamma_1$, and their empirical mean (black continuous line). 
\subref{Fig1d}~Ten i.i.d. samples of the elliptic density estimators $\rho_{\mathbf{x}_i^D, \omega^1, \varepsilon, N}\big( \mathbf{x}_j^1 \big)$ and their empirical mean (black continuous line), for $j \in \big\{ 0, 4, 8, \ldots, 36 \big\}$.}
\label{Fig01}
\end{figure}

\begin{figure}[H]
\centering
\subfigure[]{
\includegraphics[scale=0.6]{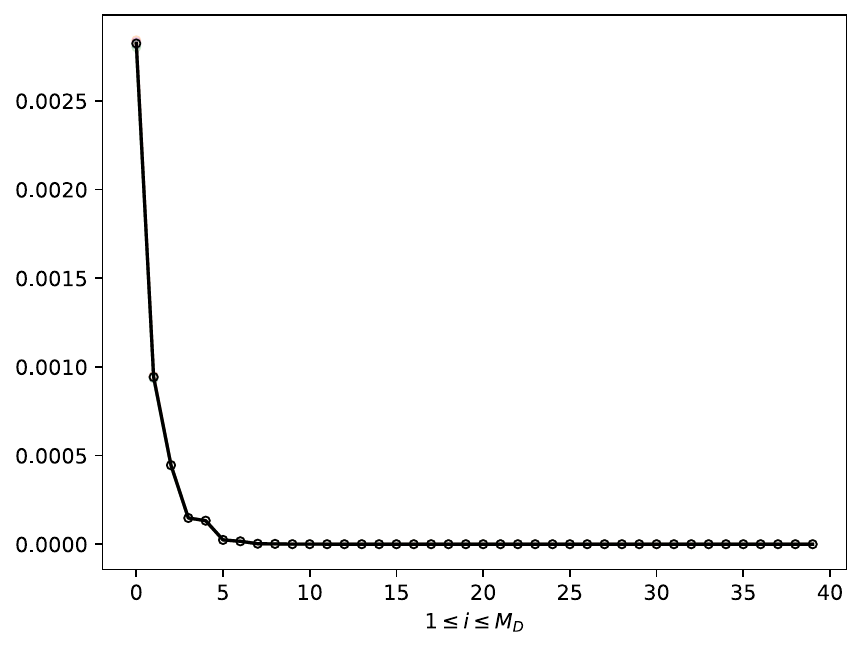}
\label{Fig2a}}
\subfigure[]{
\includegraphics[scale=0.6]{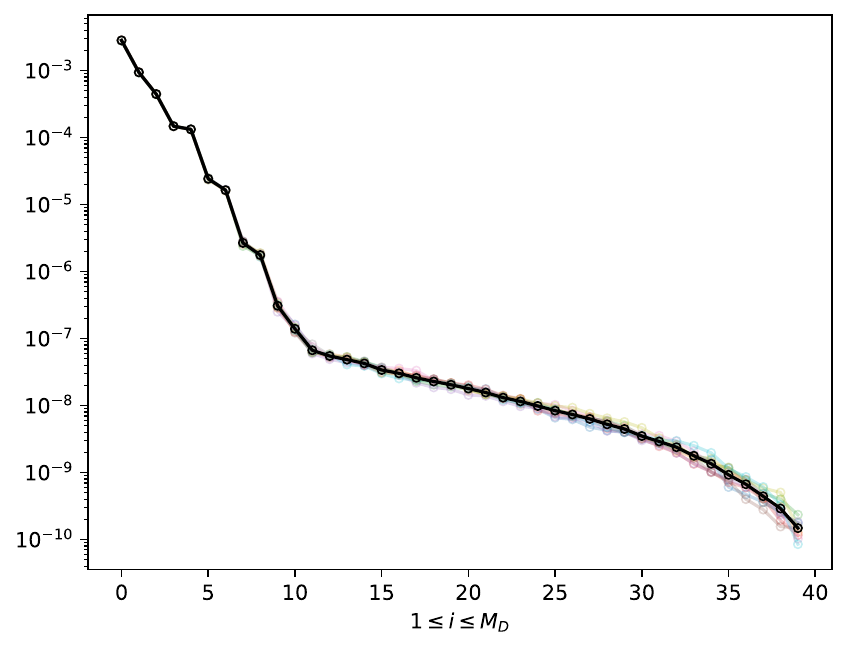}
\label{Fig2b}}
\subfigure[]{
\includegraphics[scale=0.6]{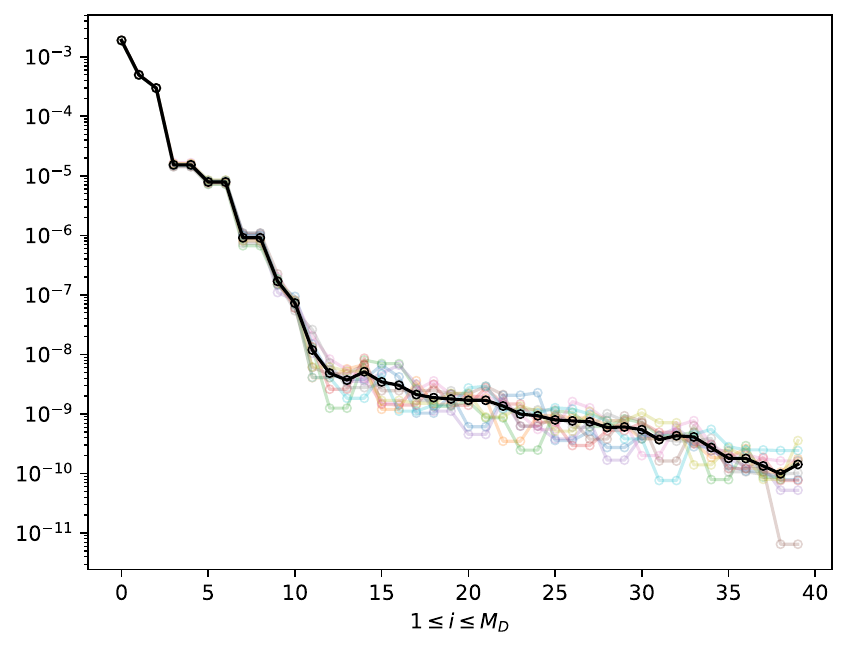}
\label{Fig2c}}
\subfigure[]{
\includegraphics[scale=0.6]{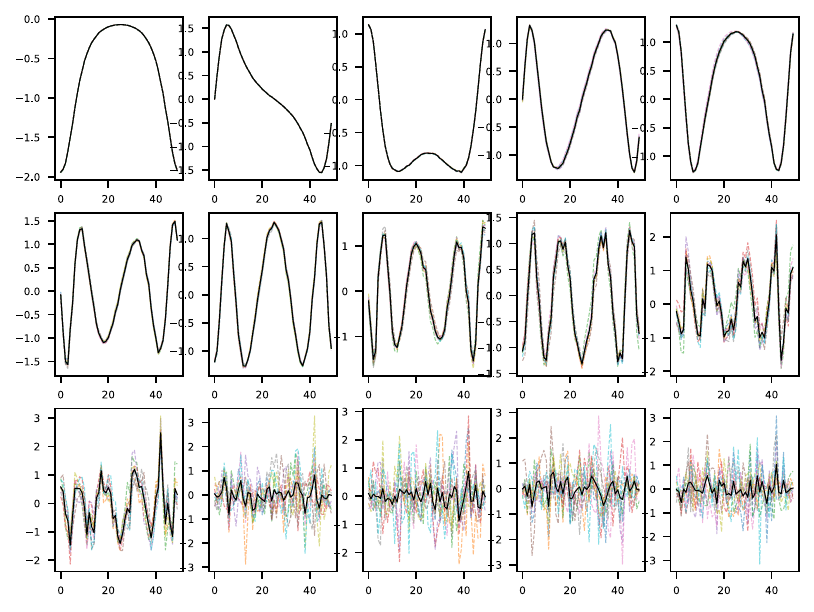}
\label{Fig4b}}
\vspace{-10pt}
\caption[Figure]{\footnotesize \Cref{ex:ex01}: Ten i.i.d. samples of the eigenvalues $\widetilde{\lambda}^{\boldsymbol{\nu}}_i$, $i = \overline{1,M_D}$, of the Monte-Carlo estimator $\boldsymbol{\Lambda}^{\boldsymbol{\nu}}_{\omega^1, \varepsilon, N}$ and their empirical mean (black continuous line), represented on \subref{Fig2a}~a linear scale, and \subref{Fig2b}~a semi-logarithmic $y-$axis scale. 
\subref{Fig2c}~The gaps of the eigenvalues $\widetilde{\lambda}^{\boldsymbol{\nu}}_i$, $i = \overline{1,M_D}$, represented on a semi-logarithmic $y-$axis scale.
\subref{Fig4b}~Ten i.i.d. samples of the eigenvector estimators $\widetilde{u}_\ell$, $\ell = \overline{1,15}$, retrieved on $\Gamma_1$, and their empirical mean (black continuous line).}
\label{Fig02}
\end{figure}

\clearpage

\begin{figure}[H]
\centering
\subfigure[]{
\includegraphics[scale=0.6]{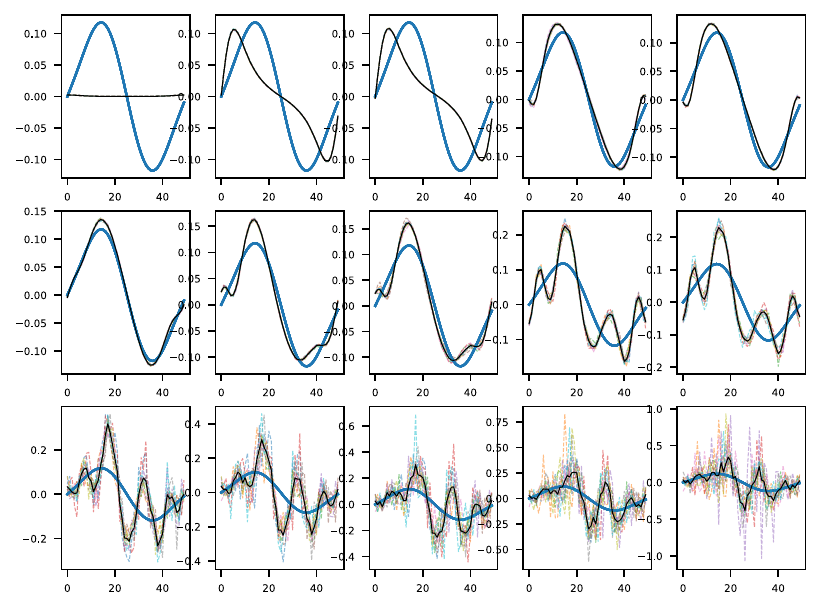}
\label{Fig5a}}
\hspace{5pt}
\subfigure[]{
\includegraphics[scale=0.6]{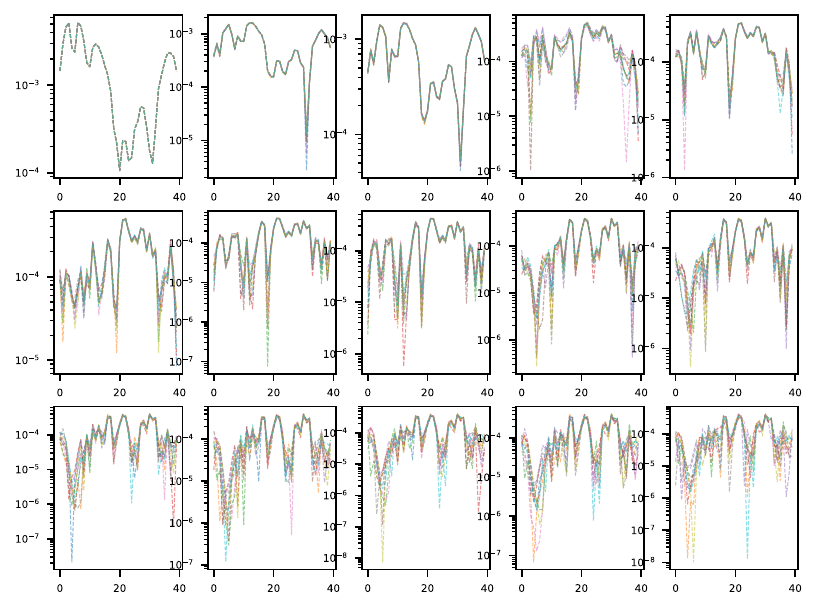}
\label{Fig5d}}
\vspace{-10pt}
\caption[Figure]{\footnotesize \Cref{ex:ex01}: \subref{Fig5a}~Ten i.i.d. samples of the Monte Carlo estimator $u^{(r)}_{\omega^1, \varepsilon, N}$, $r = \overline{1,15}$, at $\big( \mathbf{x}_j^1 \big)_{j = \overline{1, M_1}} \subset \Gamma_1$, their empirical mean (black continuous line), and the exact solution $u^{\rm (ex)}\big|_{\Gamma_1}$ given by \eqref{eq:2D-sol1} (blue continuous line), and \subref{Fig5d}~the absolute errors $\big\vert u_i^D - \widehat{u}_i^{D,r} \big\vert$, $i = \overline{1, M_D}$, $r = \overline{1, 15}$.}
\label{Fig5ad}
\end{figure}
\vspace{-20pt}

\begin{figure}[H]
\centering
\subfigure[]{
\includegraphics[scale=0.6]{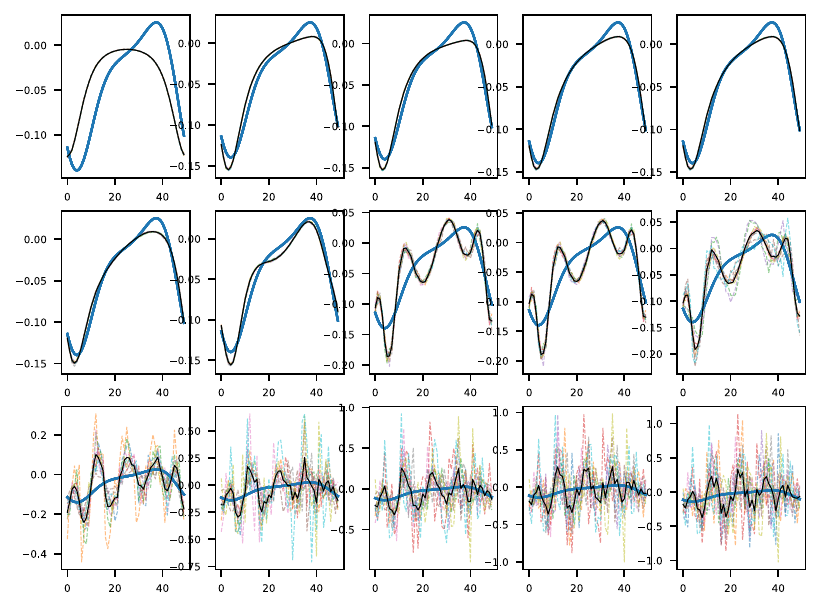}
\label{Fig5b}}
\hspace{5pt}
\subfigure[]{
\includegraphics[scale=0.6]{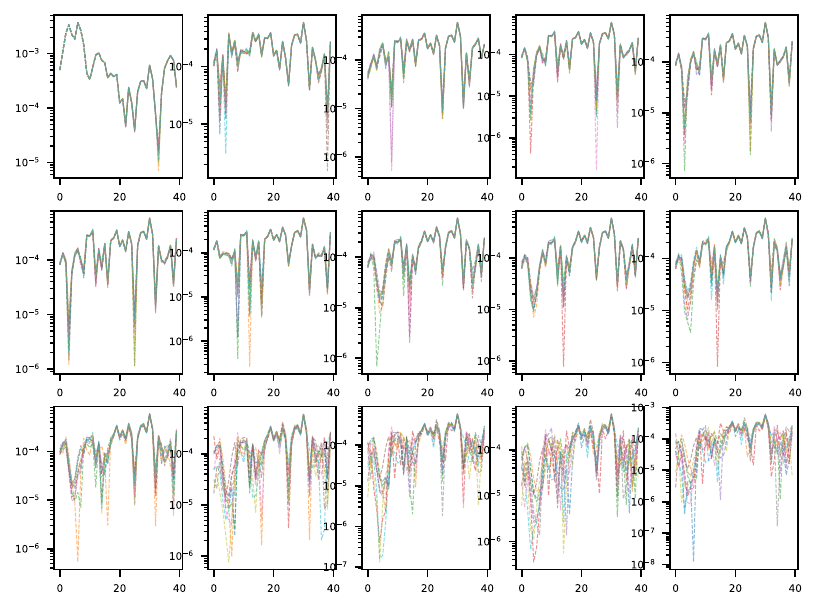}
\label{Fig5e}}
\vspace{-10pt}
\caption[Figure]{\footnotesize \Cref{ex:ex01}: \subref{Fig5b}~Ten i.i.d. samples of the Monte Carlo estimator $u^{(r)}_{\omega^1, \varepsilon, N}$, $r = \overline{1,15}$, at $\big( \mathbf{x}_j^1 \big)_{j = \overline{1, M_1}} \subset \Gamma_1$, their empirical mean (black continuous line), and the exact solution $u^{\rm (ex)}\big|_{\Gamma_1}$ given by \eqref{eq:2D-sol2} (blue continuous line), and \subref{Fig5e}~the absolute errors $\big\vert u_i^D - \widehat{u}_i^{D,r} \big\vert$, $i = \overline{1, M_D}$, $r = \overline{1, 15}$.}
\label{Fig5be}
\end{figure}
\vspace{-20pt}

\begin{figure}[H]
\centering
\subfigure[]{
\includegraphics[scale=0.6]{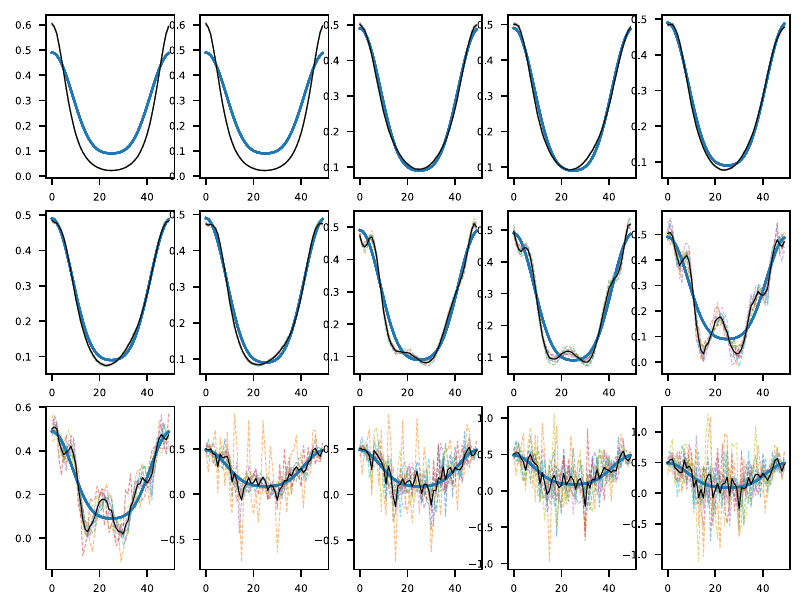}
\label{Fig5c}}
\hspace{5pt}
\subfigure[]{
\includegraphics[scale=0.6]{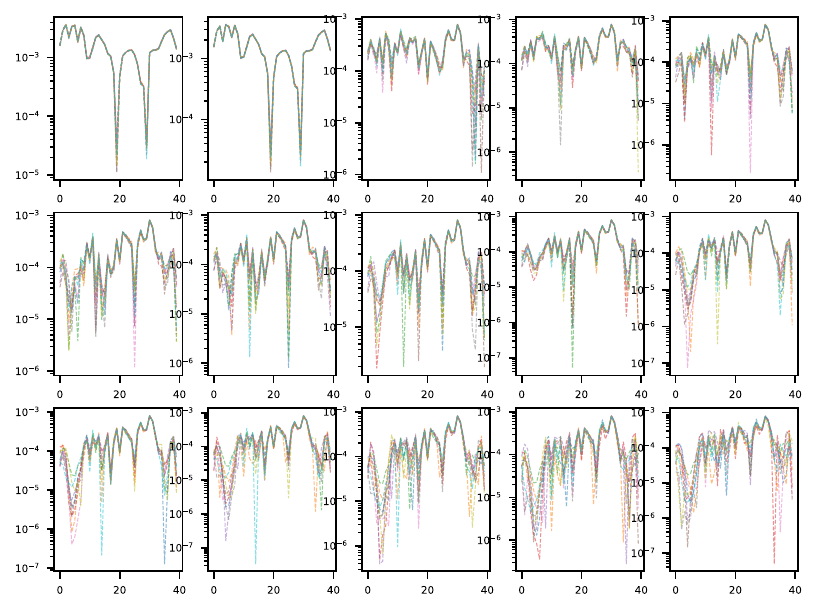}
\label{Fig5f}}
\vspace{-10pt}
\caption[Figure]{\footnotesize \Cref{ex:ex01}: \subref{Fig5c}~Ten i.i.d. samples of the Monte Carlo estimator $u^{(r)}_{\omega^1, \varepsilon, N}$, $r = \overline{1,15}$, at $\big( \mathbf{x}_j^1 \big)_{j = \overline{1, M_1}} \subset \Gamma_1$, their empirical mean (black continuous line), and the exact solution $u^{\rm (ex)}\big|_{\Gamma_1}$ given by \eqref{eq:2D-sol3} (blue continuous line), and \subref{Fig5f}~the absolute errors $\big\vert u_i^D - \widehat{u}_i^{D,r} \big\vert$, $i = \overline{1, M_D}$, $r = \overline{1, 15}$.}
\label{Fig5cf}
\end{figure}

\begin{figure}[H]
\centering
\subfigure[]{
\includegraphics[scale=0.6]{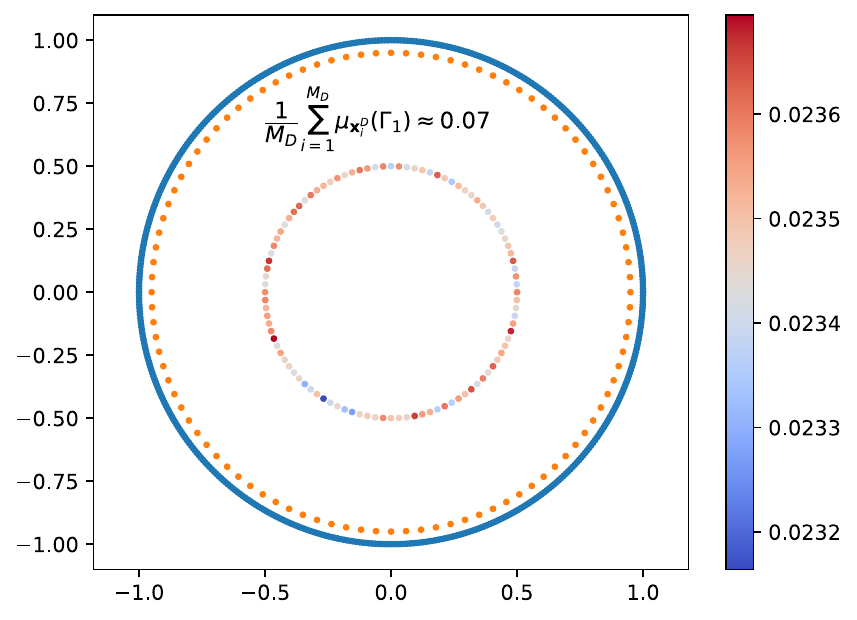}
\label{Fig7a}}
\subfigure[]{
\includegraphics[scale=0.55]{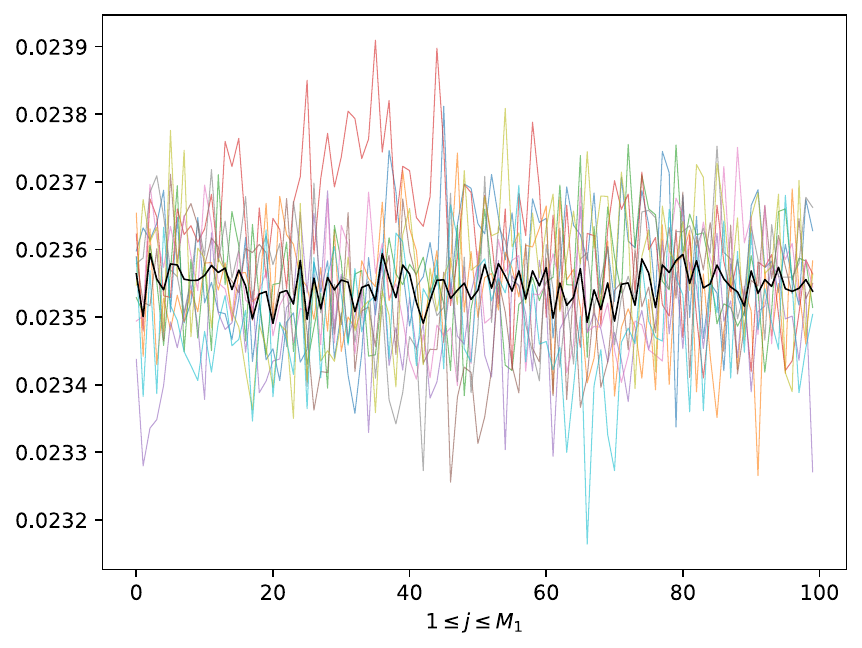}
\label{Fig7b}}
\subfigure[]{
\includegraphics[scale=0.55]{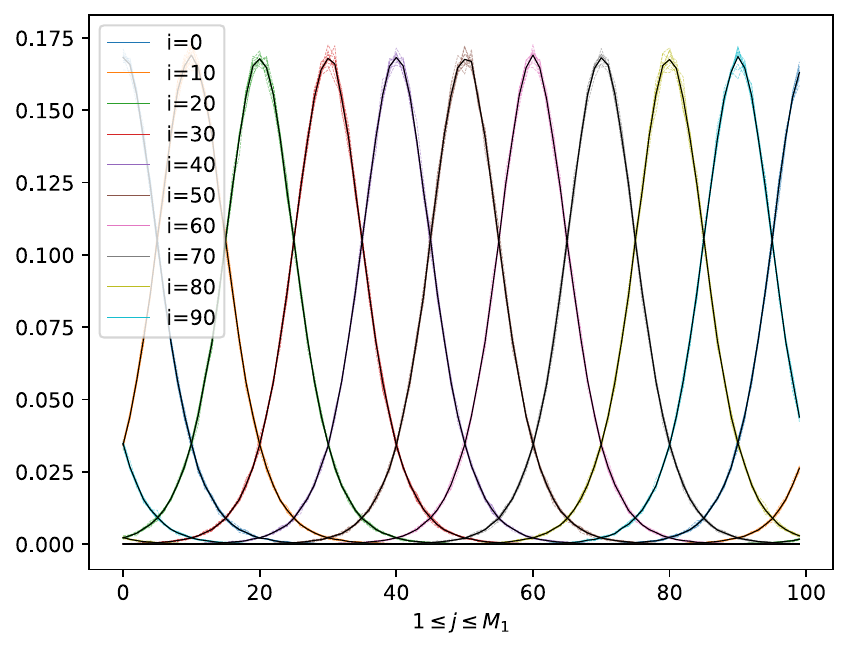}
\label{Fig7c}}
\subfigure[]{
\includegraphics[scale=0.55]{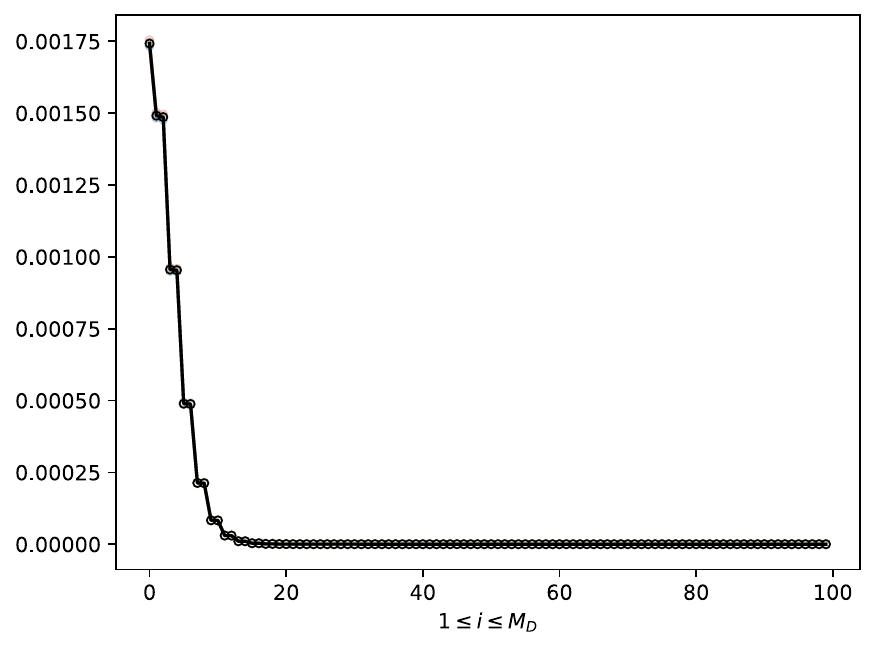}
\label{Fig7d}}
\subfigure[]{
\includegraphics[scale=0.55]{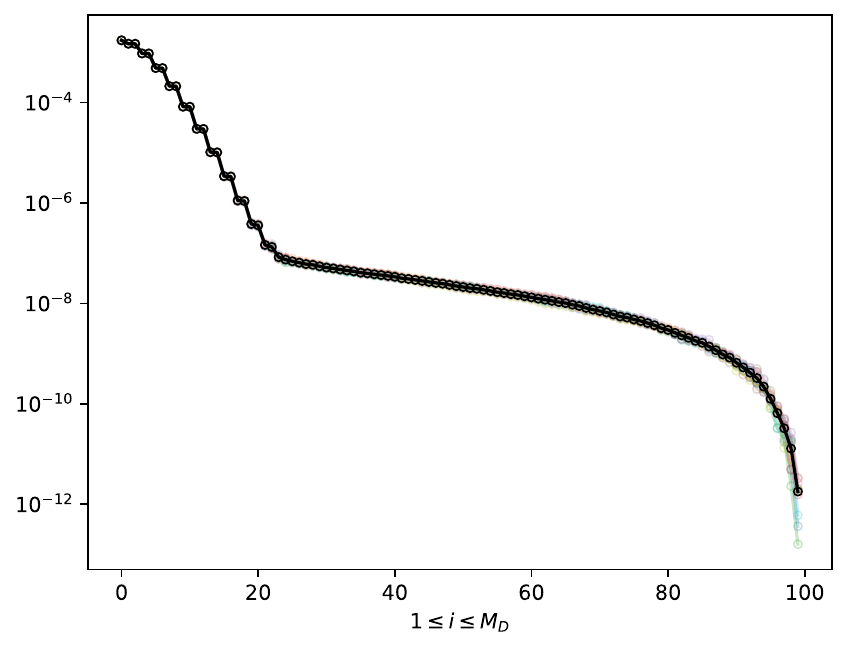}
\label{Fig7e}}
\subfigure[]{
\includegraphics[scale=0.6]{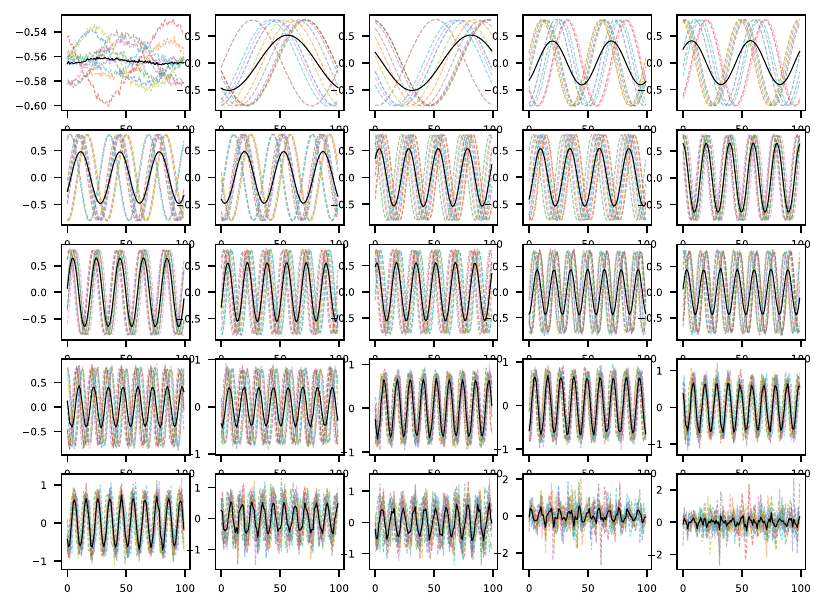}
\label{Fig7f}}
\vspace{-10pt}
\caption[Figure]{\footnotesize \Cref{ex:ex02}: \subref{Fig7a}~One sample, and \subref{Fig7b}~ten i.i.d. samples of $\dfrac{1}{M_D} \displaystyle \sum_{i=1}^{M_D} \rho_{\mathbf{x}_i^D, \omega^1, \varepsilon, N}\big( \mathbf{x}_j^1 \big)$, retrieved on $\Gamma_1$, and their empirical mean (black continuous line). 
\subref{Fig7c}~Ten i.i.d. samples of the elliptic density estimators $\rho_{\mathbf{x}_i^D, \omega^1, \varepsilon, N}\big( \mathbf{x}_j^1 \big)$ and their empirical mean (black continuous line), for $j \in \big\{ 0, 10, 20, \ldots, 90 \big\}$. 
Ten i.i.d. samples of the eigenvalues $\widetilde{\lambda}^{\boldsymbol{\nu}}_i$, $i = \overline{1,M_D}$, of the Monte Carlo estimator $\boldsymbol{\Lambda}^{\boldsymbol{\nu}}_{\omega^1, \varepsilon, N}$ and their empirical mean (black continuous line), represented on \subref{Fig7d}~a linear scale, and \subref{Fig7e}~a semi-logarithmic $y-$axis scale. 
\subref{Fig7f}~Ten i.i.d. samples of the approximate eigenfunction estimator $\widetilde{u}_\ell$, $\ell = \overline{1,25}$, and their empirical mean (black continuous line). 
}
\label{Fig7}
\end{figure}
\vspace{-10pt}
\begin{figure}[H]
\centering
\subfigure[]{
\includegraphics[scale=0.6]{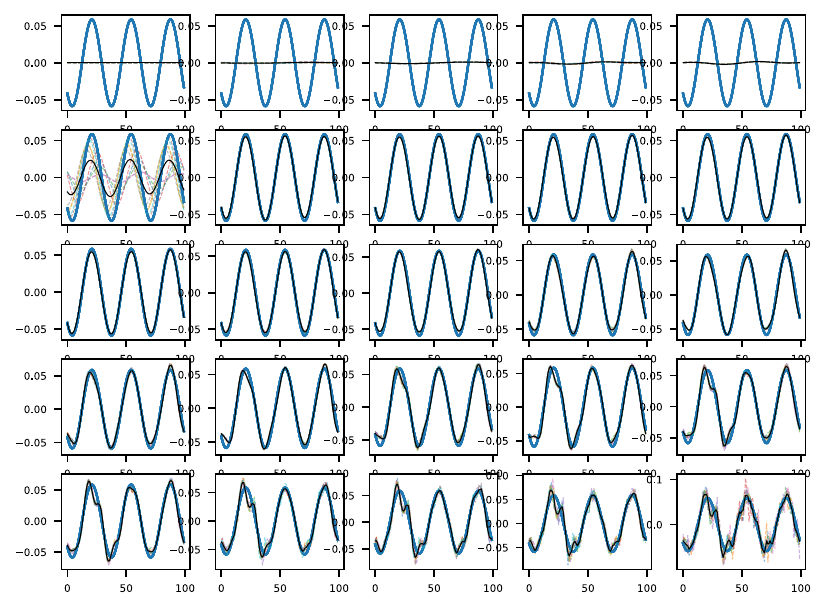}
\label{Fig8b}}
\hspace{5pt}
\subfigure[]{
\includegraphics[scale=0.6]{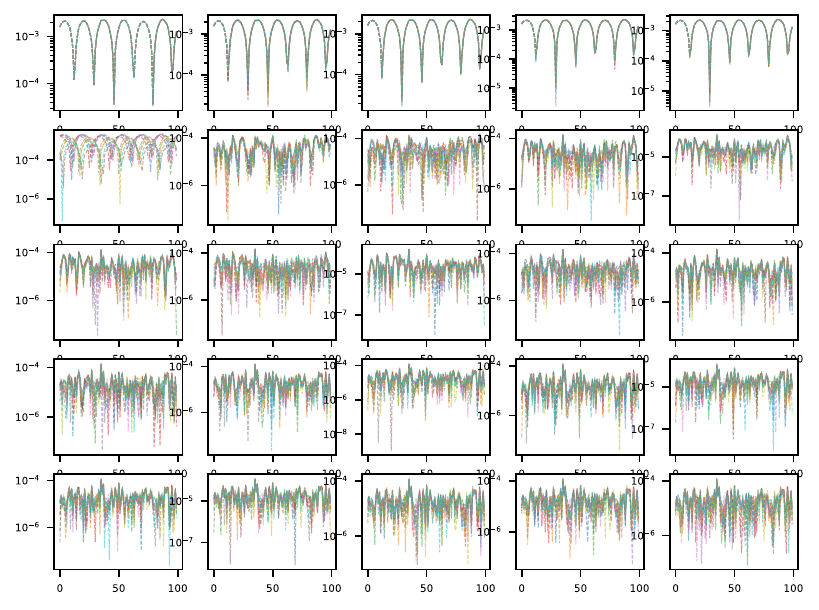}
\label{Fig8e}}
\vspace{-10pt}
\caption[Figure]{\footnotesize \Cref{ex:ex02}: \subref{Fig8b}~Ten i.i.d. samples of the Monte Carlo estimator $u^{(r)}_{\omega^1, \varepsilon, N}$, $r = \overline{1,25}$, at $\big( \mathbf{x}_j^1 \big)_{j = \overline{1, M_1}} \subset \Gamma_1$, their empirical mean (black continuous line), and the exact solution $u^{\rm (ex)}\big|_{\Gamma_1}$ given by \eqref{eq:2D-sol2} (blue continuous line), and \subref{Fig8e}~the absolute errors $\big\vert u_i^D - \widehat{u}_i^{D,r} \big\vert$, $i = \overline{1, M_D}$, $r = \overline{1, 25}$.}
\label{Fig8be}
\end{figure}
\vspace{-20pt}
\begin{figure}[H]
\centering
\subfigure[]{
\includegraphics[scale=0.6]{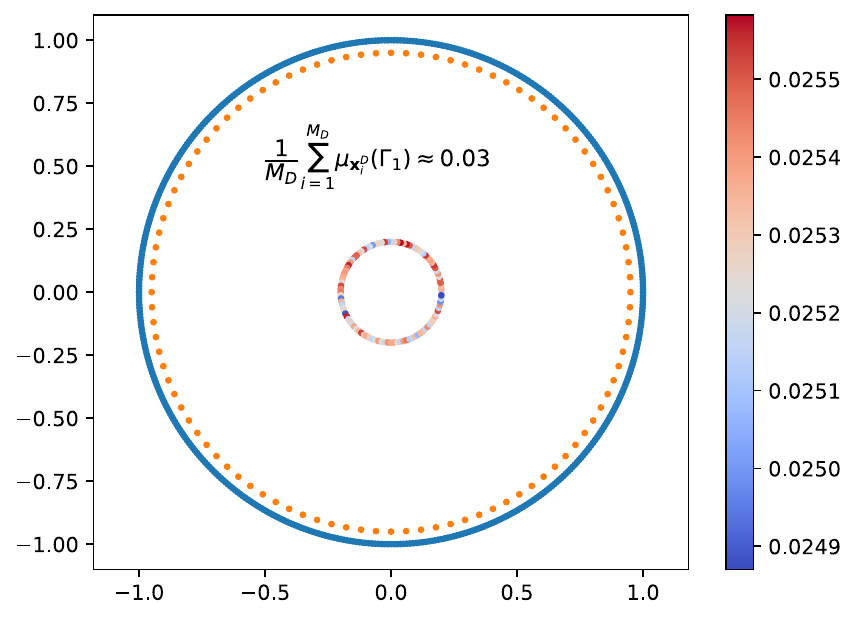}
\label{Fig9a}}
\subfigure[]{
\includegraphics[scale=0.55]{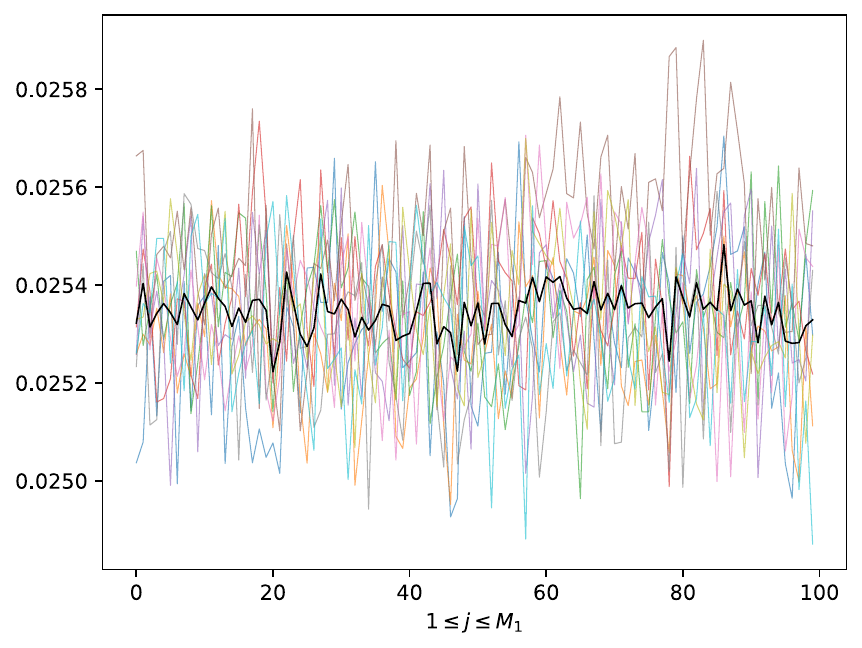}
\label{Fig9b}}
\subfigure[]{
\includegraphics[scale=0.55]{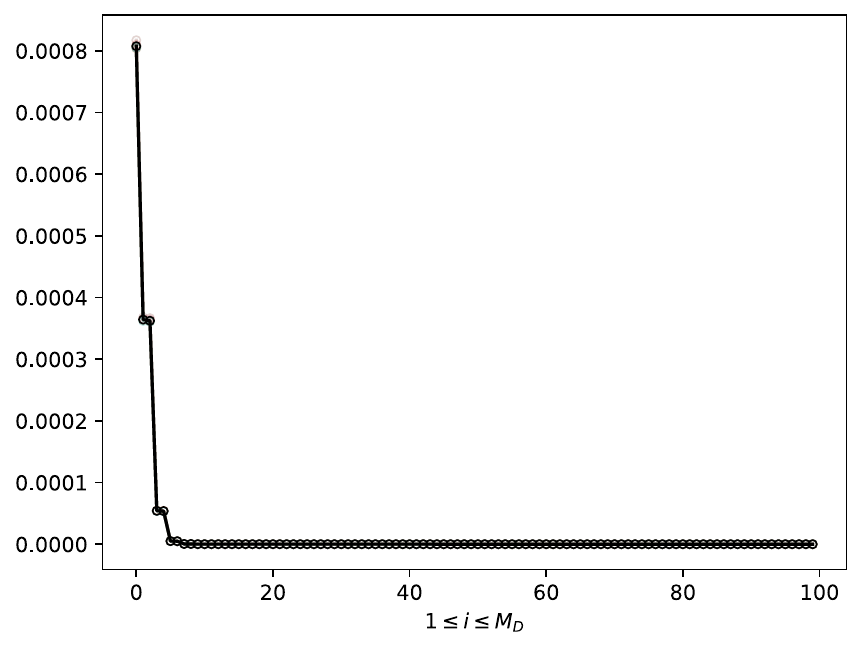}
\label{Fig9d}}
\subfigure[]{
\includegraphics[scale=0.6]{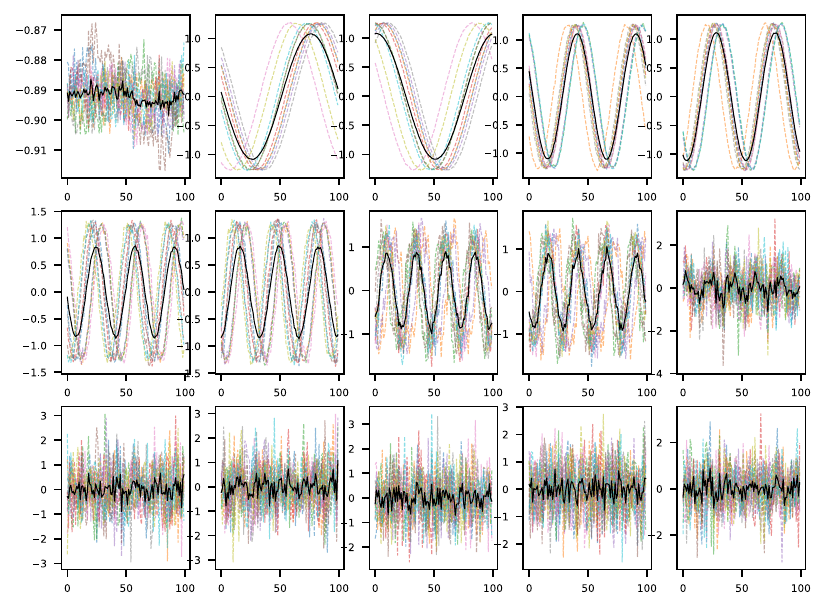}
\label{Fig9f}}
\vspace{-10pt}
\caption[Figure]{\footnotesize \Cref{ex:ex03}: \subref{Fig9a}~One sample, and \subref{Fig9b}~$10$ i.i.d. samples of $\dfrac{1}{M_D} \displaystyle \sum_{i=1}^{M_D} \rho_{\mathbf{x}_i^D, \omega^1, \varepsilon, N}\big( \mathbf{x}_j^1 \big)$, retrieved on $\Gamma_1$, and their empirical mean (black continuous line). 
\subref{Fig9b}~Ten i.i.d. samples of the elliptic density estimators $\rho_{\mathbf{x}_i^D, \omega^1, \varepsilon, N}\big( \mathbf{x}_j^1 \big)$ and their empirical mean (black continuous line), for $j \in \big\{ 0, 10, 20, \ldots, 90 \big\}$
\subref{Fig9d}~Ten i.i.d. samples of the eigenvalues $\widetilde{\lambda}^{\boldsymbol{\nu}}_i$, $i = \overline{1,M_D}$, of the Monte Carlo estimator $\boldsymbol{\Lambda}^{\boldsymbol{\nu}}_{\omega^1, \varepsilon, N}$ and their empirical mean (black continuous line), represented on a linear scale.
\subref{Fig9f}~Ten i.i.d. samples of the approximate eigenfunction estimator $\widetilde{u}_\ell$, $\ell = \overline{1,15}$, and their empirical mean (black continuous line). 
}
\label{Fig9}
\end{figure}
\vspace{-10pt}

\begin{figure}[H]
\centering
\subfigure[]{
\includegraphics[scale=0.6]{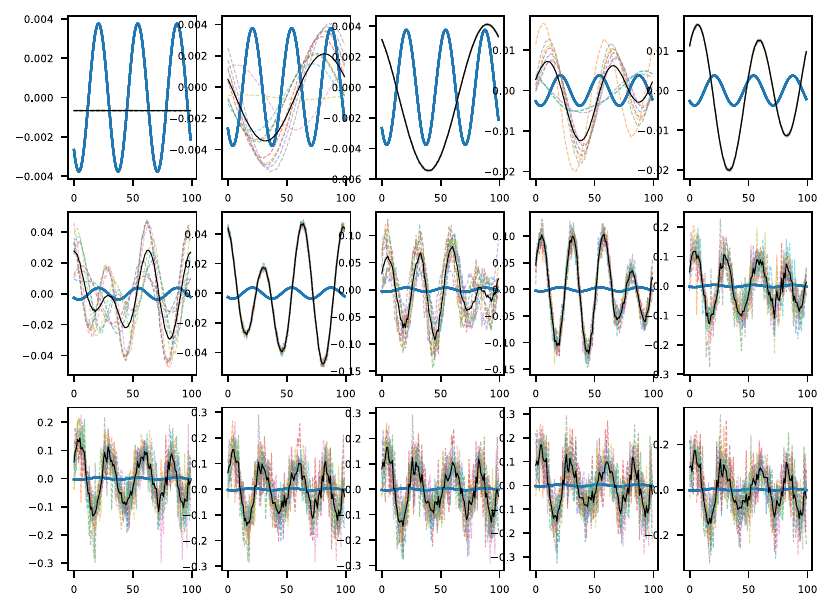}
\label{Fig10b}}
\subfigure[]{
\includegraphics[scale=0.6]{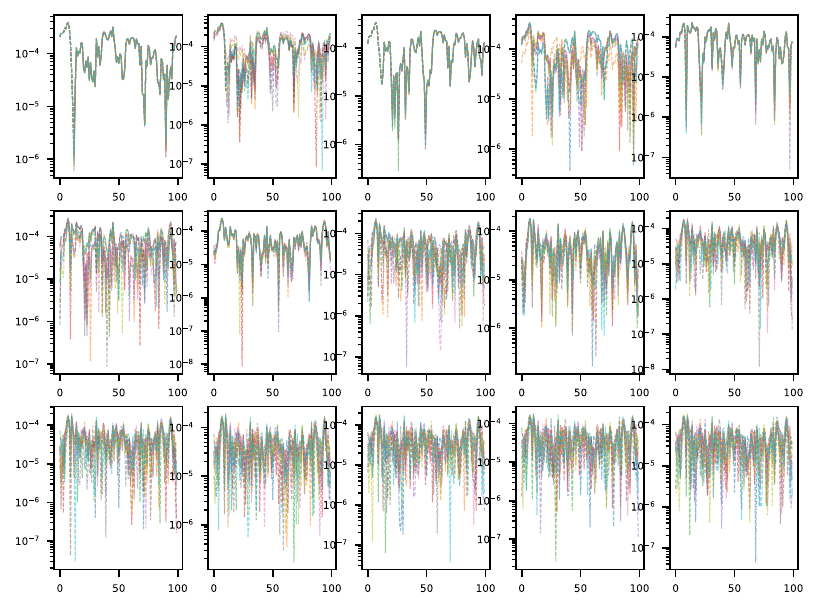}
\label{Fig10e}}
\vspace{-10pt}
\footnotesize \caption[Figure]{\footnotesize \Cref{ex:ex03}: \subref{Fig10b}~Ten i.i.d. samples of the Monte Carlo estimator $u^{(r)}_{\omega^1, \varepsilon, N}$, $r = \overline{1,15}$, at $\big( \mathbf{x}_j^1 \big)_{j = \overline{1, M_1}} \subset \Gamma_1$, their empirical mean (black continuous line), and the exact solution $u^{\rm (ex)}\big|_{\Gamma_1}$ given by \eqref{eq:2D-sol2} (blue continuous line), and \subref{Fig10e}~the absolute errors $\big\vert u_i^D - \widehat{u}_i^{D,r} \big\vert$, $i = \overline{1,M_D}$, $r = \overline{1,15}$.}
\label{Fig10be}
\end{figure}
\vspace{-20pt}

\begin{figure}[H]
\centering
\subfigure[]{
\includegraphics[scale=0.6]{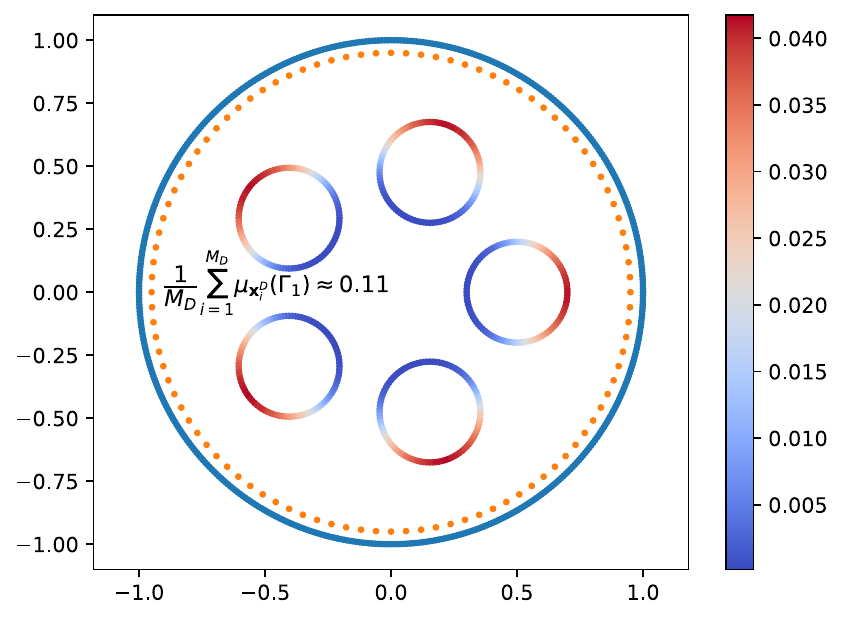}
\label{Fig11a}}
\subfigure[]{
\includegraphics[scale=0.55]{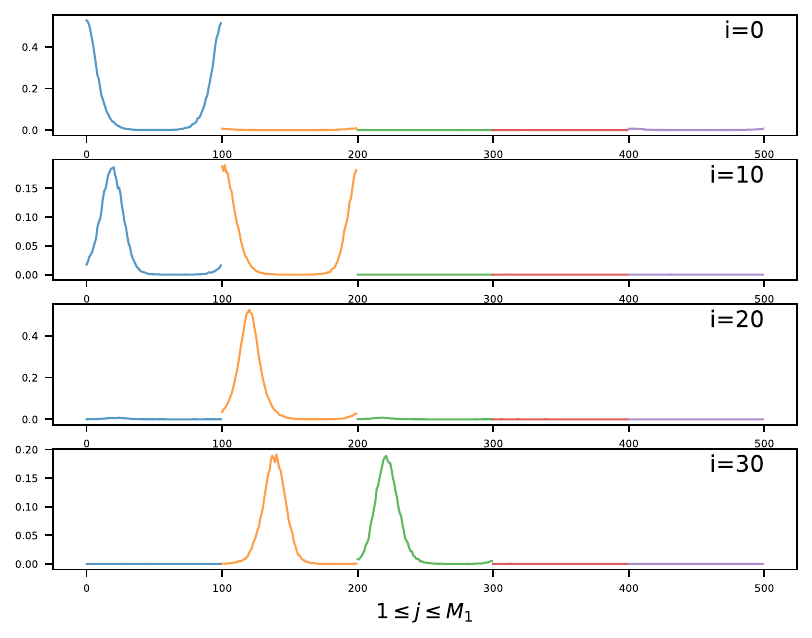}
\label{Fig11c}}
\subfigure[]{
\includegraphics[scale=0.55]{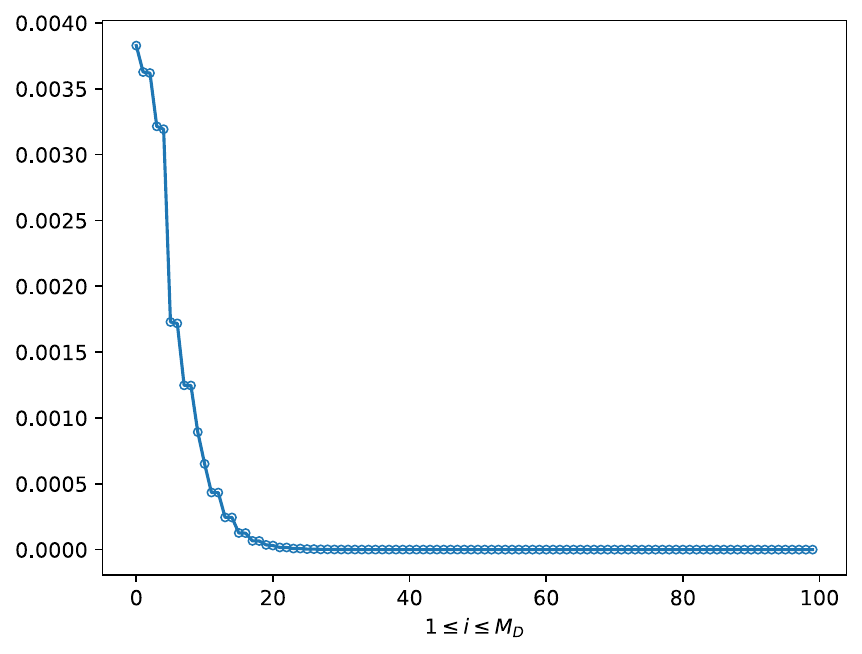}
\label{Fig11d}}
\subfigure[]{
\includegraphics[scale=0.6]{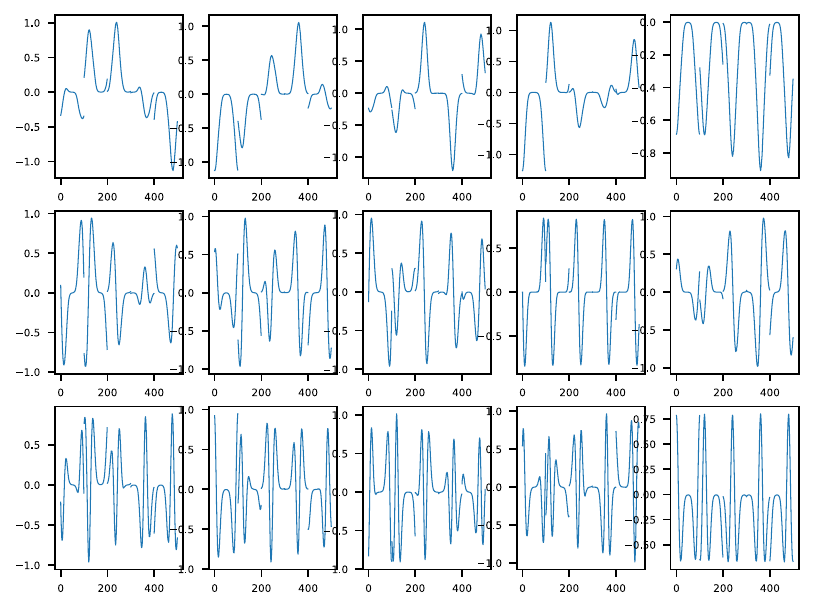}
\label{Fig11f}}
\vspace{-10pt}
\caption[Figure]{\footnotesize \Cref{ex:ex04}: \subref{Fig11a}~One sample of $\dfrac{1}{M_D} \displaystyle \sum_{i=1}^{M_D} \rho_{\mathbf{x}_i^D, \omega^1, \varepsilon, N}\big( \mathbf{x}_j^1 \big)$, retrieved on $\Gamma_1$. 
\subref{Fig11c}~One sample of the elliptic density estimators $\rho_{\mathbf{x}_i^D, \omega^1, \varepsilon, N}\big( \mathbf{x}_j^1 \big)$, for $j \in \big\{ 0, 10, 20, 30 \big\}$. 
\subref{Fig11d}~One sample of the eigenvalues $\widetilde{\lambda}^{\boldsymbol{\nu}}_i$, $i = \overline{1,M_D}$, of the Monte Carlo estimator $\boldsymbol{\Lambda}^{\boldsymbol{\nu}}_{\omega^1, \varepsilon, N}$, represented on a linear scale.
\subref{Fig11f}~One sample of the approximate eigenfunction estimator $\widetilde{u}_\ell$, $\ell = \overline{1,15}$. 
}
\label{Fig11}
\end{figure}
\vspace{-20pt}

\begin{figure}[H]
\centering
\subfigure[]{
\includegraphics[scale=0.6]{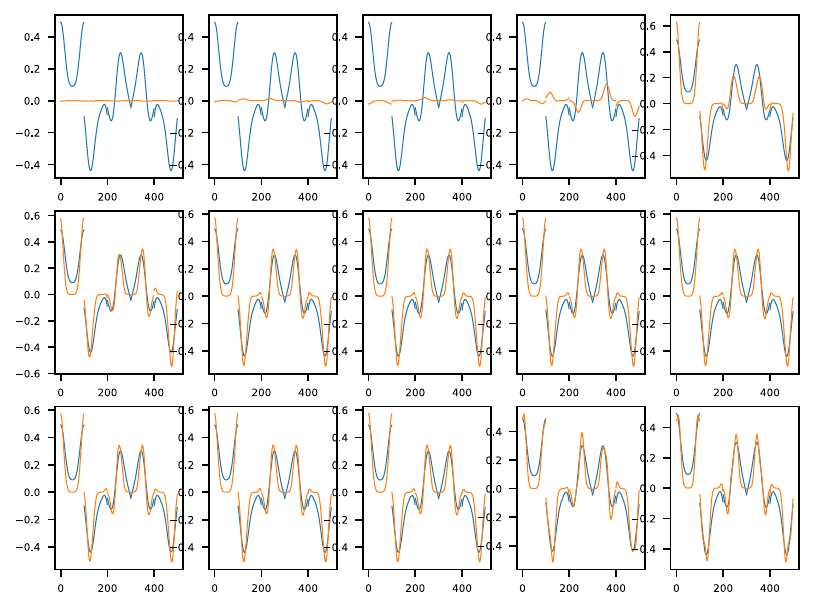}
\label{Fig12c}}
\subfigure[]{
\includegraphics[scale=0.6]{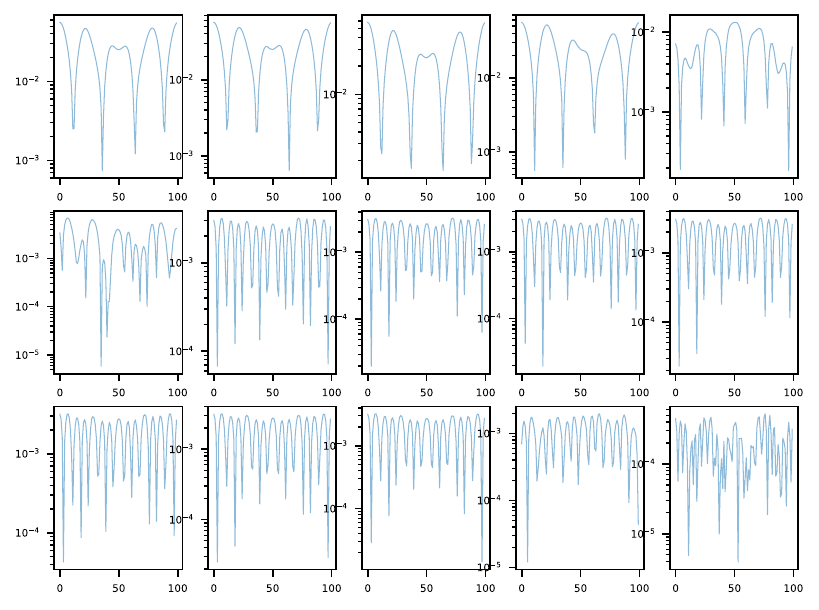}
\label{Fig12f}}
\vspace{-10pt}
\caption[Figure]{\footnotesize \Cref{ex:ex04}: \subref{Fig12c}~One sample of the Monte Carlo estimator $u^{(r)}_{\omega^1, \varepsilon, N}$, $r = \overline{1,15}$, at $\big( \mathbf{x}_j^1 \big)_{j = \overline{1, M_1}} \subset \Gamma_1$, and the exact solution $u^{\rm (ex)}\big|_{\Gamma_1}$ given by \eqref{eq:2D-sol3} (blue continuous line), and \subref{Fig12f}~the absolute errors $\big\vert u_i^D - \widehat{u}_i^{D,r} \big\vert$, $i = \overline{1,M_D}$, $r = \overline{1,15}$.}
\label{Fig12cf}
\end{figure}
\vspace{-20pt}

\begin{figure}[H]
\centering
\subfigure[]{
\includegraphics[scale=0.6]{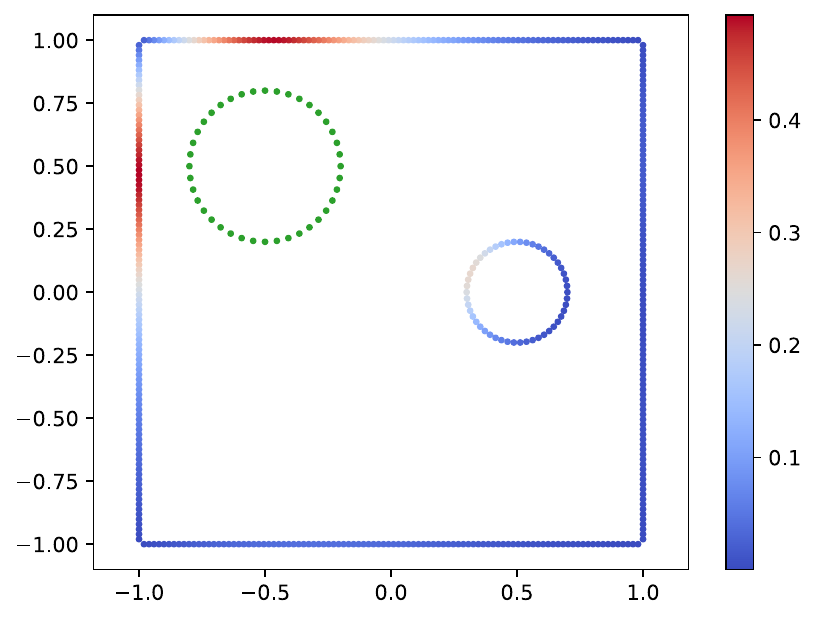}
\label{Fig13a}}
\subfigure[]{
\includegraphics[scale=0.55]{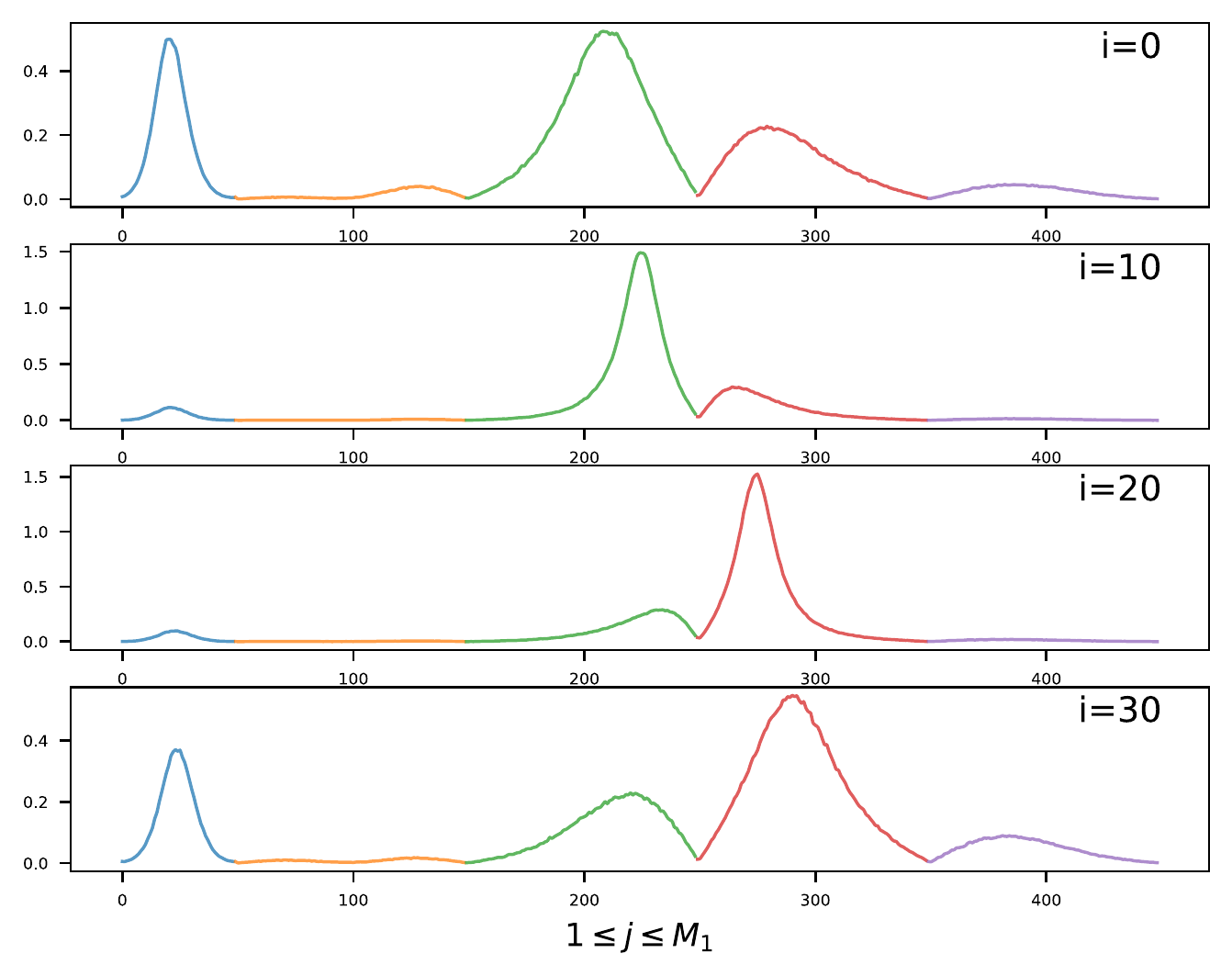}
\label{Fig13c}}
\subfigure[]{
\includegraphics[scale=0.55]{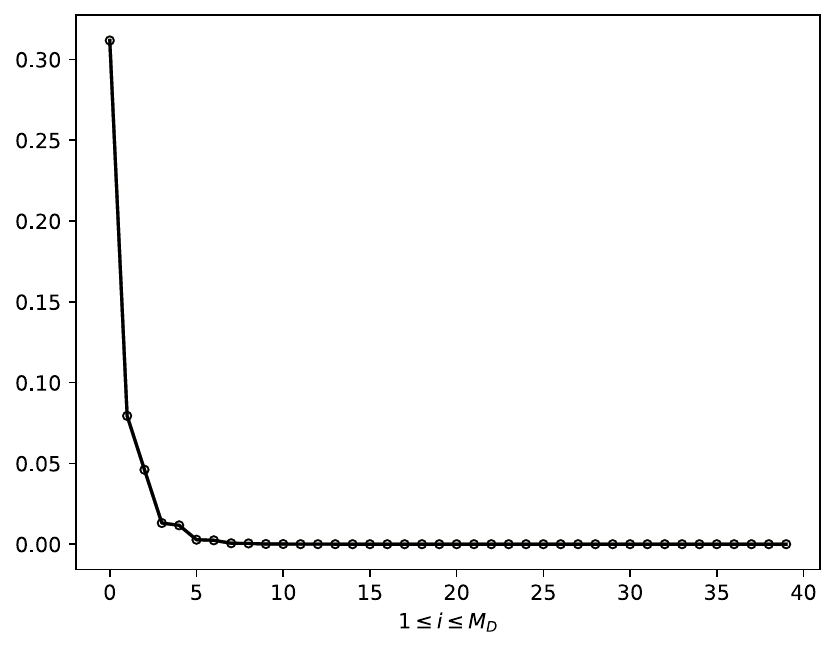}
\label{Fig13d}}
\subfigure[]{
\includegraphics[scale=0.6]{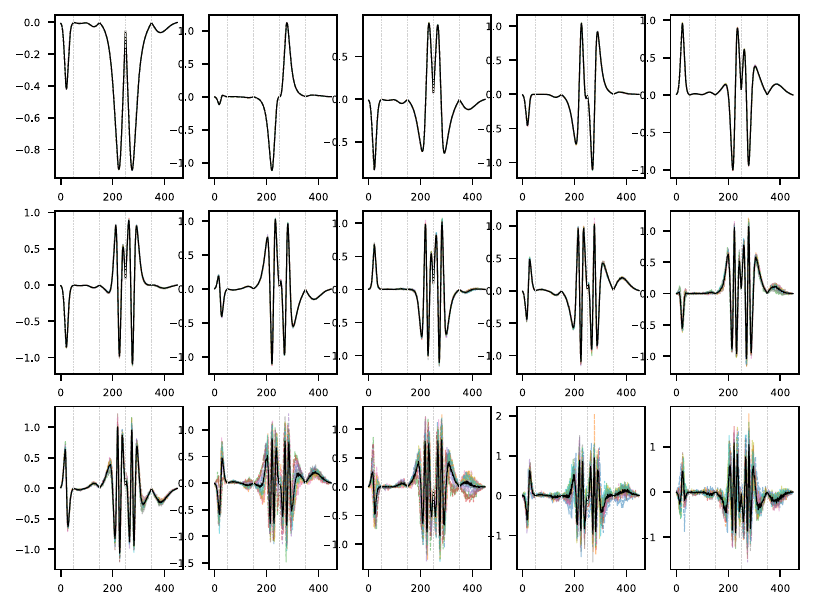}
\label{Fig13f}}
\vspace{-10pt}
\caption[Figure]{\footnotesize \Cref{ex:ex05}: \subref{Fig13a}~One sample of $\dfrac{1}{M_D} \displaystyle \sum_{i=1}^{M_D} \rho_{\mathbf{x}_i^D, \omega^1, \varepsilon, N}\big( \mathbf{x}_j^1 \big)$, retrieved on $\Gamma_1$. 
\subref{Fig13c}~One sample of the elliptic density estimators $\rho_{\mathbf{x}_i^D, \omega^1, \varepsilon, N}\big( \mathbf{x}_j^1 \big)$, for $j \in \big\{ 0, 10, 20, 30 \big\}$. 
\subref{Fig13d}~Ten i.i.d. samples of the eigenvalues $\widetilde{\lambda}^{\boldsymbol{\nu}}_i$, $i = \overline{1,M_D}$, of the Monte Carlo estimator $\boldsymbol{\Lambda}^{\boldsymbol{\nu}}_{\omega^1, \varepsilon, N}$, represented on a linear scale. 
\subref{Fig13f}~Ten i.i.d. samples of the approximate eigenfunction estimator $\widetilde{u}_\ell$, $\ell = \overline{1,15}$, and their empirical mean. 
}
\label{Fig13}
\end{figure}
\vspace{-10pt}

\begin{figure}[H]
\centering
\subfigure[]{
\includegraphics[scale=0.6]{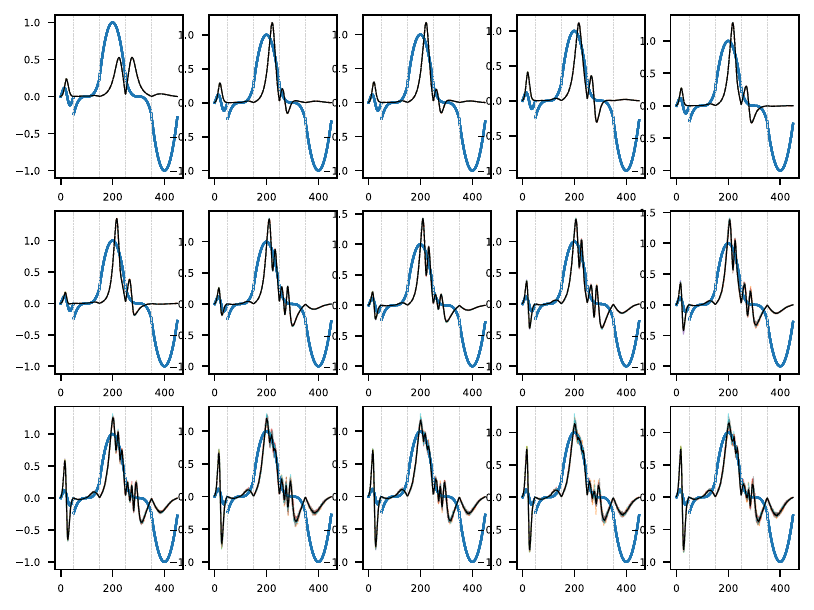}
\label{Fig14a}}
\subfigure[]{
\includegraphics[scale=0.6]{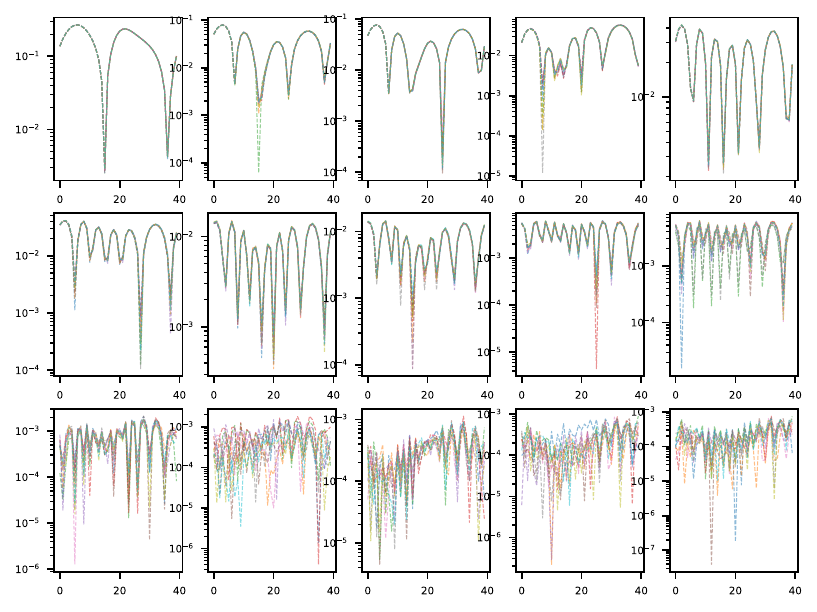}
\label{Fig14d}}
\vspace{-10pt}
\caption[Figure]{\footnotesize \Cref{ex:ex05}: \subref{Fig14a}~Ten i.i.d. samples of the Monte Carlo estimator $u^{(r)}_{\omega^1, \varepsilon, N}$, $r = \overline{1,15}$, at $\big( \mathbf{x}_j^1 \big)_{j = \overline{1, M_1}} \subset \Gamma_1$, their empirical mean (black continuous line), and the exact solution $u^{\rm (ex)}\big|_{\Gamma_1}$ given by \eqref{eq:2D-sol1} (blue continuous line), and \subref{Fig14d}~the absolute errors $\big\vert u_i^D - \widehat{u}_i^{D,r} \big\vert$, $i = \overline{1,M_D}$, $r = \overline{1,15}$.}
\label{Fig14ad}
\end{figure}
\vspace{-20pt}

\begin{figure}[H]
\centering
\subfigure[]{
\includegraphics[scale=0.6]{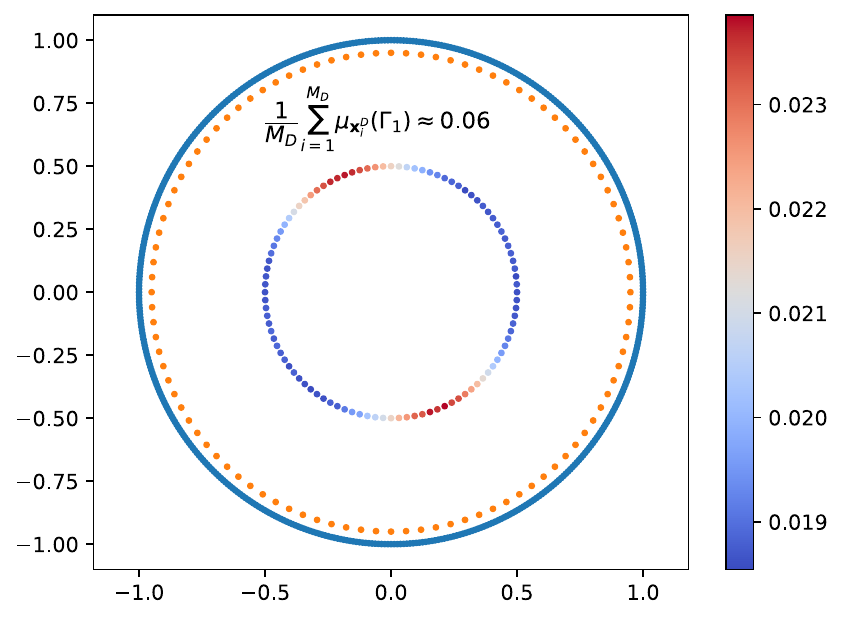}
\label{Fig15a}}
\subfigure[]{
\includegraphics[scale=0.55]{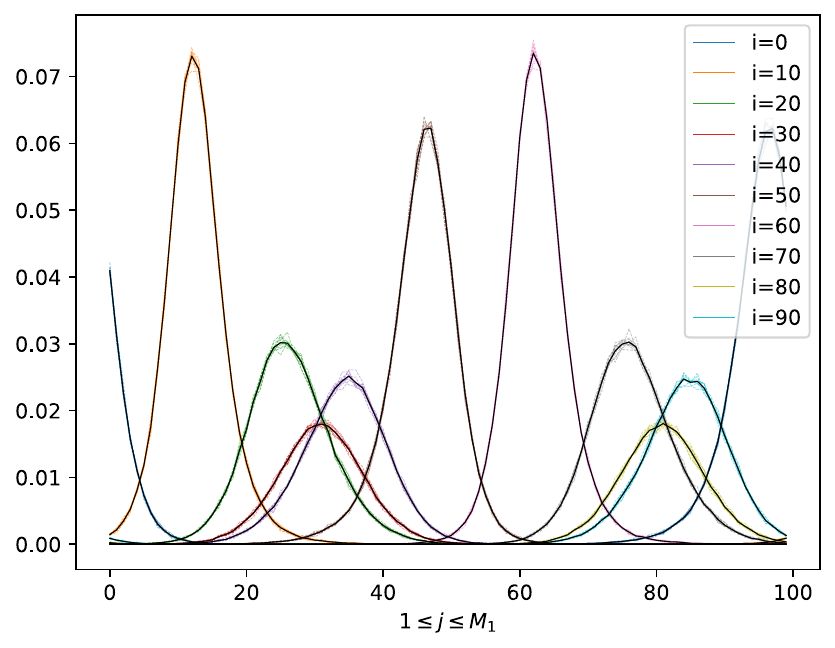}
\label{Fig15c}}
\subfigure[]{
\includegraphics[scale=0.55]{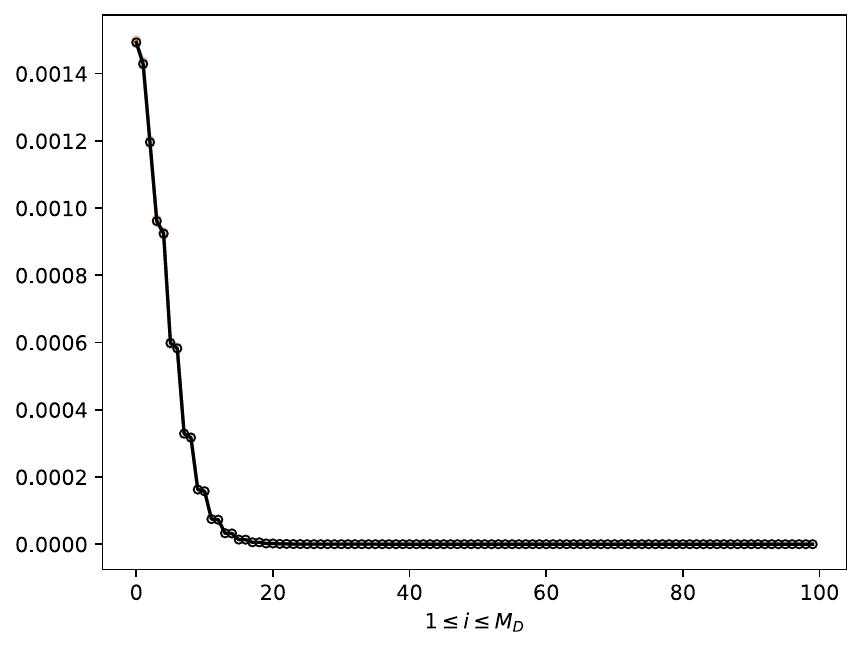}
\label{Fig15d}}
\subfigure[]{
\includegraphics[scale=0.6]{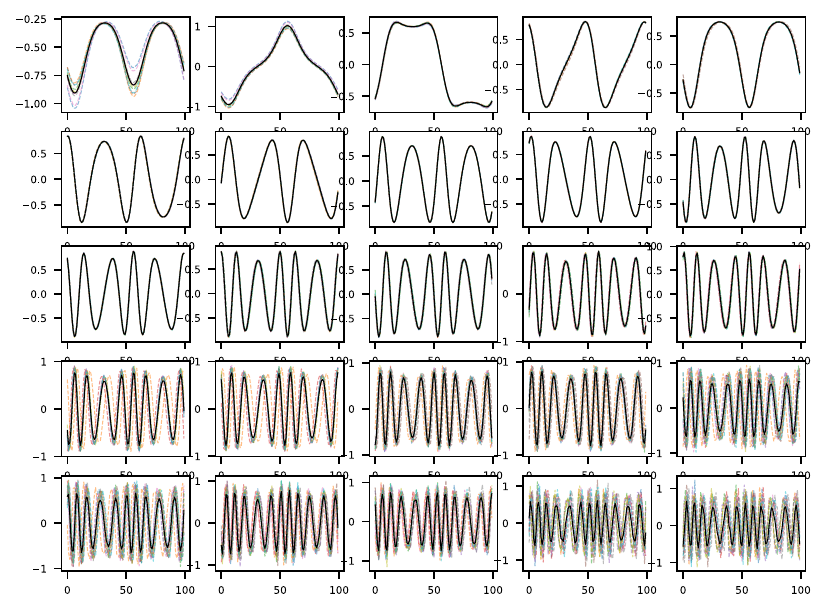}
\label{Fig15f}}
\vspace{-10pt}
\caption[Figure]{\footnotesize \Cref{ex:ex02}: \subref{Fig15a}~One sample of $\dfrac{1}{M_D} \displaystyle \sum_{i=1}^{M_D} \rho_{\mathbf{x}_i^D, \omega^1, \varepsilon, N}\big( \mathbf{x}_j^1 \big)$, retrieved on $\Gamma_1$. 
\subref{Fig15c}~Ten i.i.d. samples of the elliptic density estimators $\rho_{\mathbf{x}_i^D, \omega^1, \varepsilon, N}\big( \mathbf{x}_j^1 \big)$ and their empirical mean (black continuous line), for $j \in \big\{ 0, 10, 20, \ldots, 90 \big\}$. 
\subref{Fig15d}~Ten i.i.d. samples of the eigenvalues $\widetilde{\lambda}^{\boldsymbol{\nu}}_i$, $i = \overline{1,M_D}$, of the Monte Carlo estimator $\boldsymbol{\Lambda}^{\boldsymbol{\nu}}_{\omega^1, \varepsilon, N}$ and their empirical mean (black continuous line), represented on a linear scale. 
\subref{Fig15f}~Ten i.i.d. samples of the approximate eigenfunction estimator $\widetilde{u}_\ell$, $\ell = \overline{1,25}$, and their empirical mean (black continuous line). 
}
\label{Fig15}
\end{figure}
\vspace{-10pt}

\begin{figure}[H]
\centering
\subfigure[]{
\includegraphics[scale=0.55]{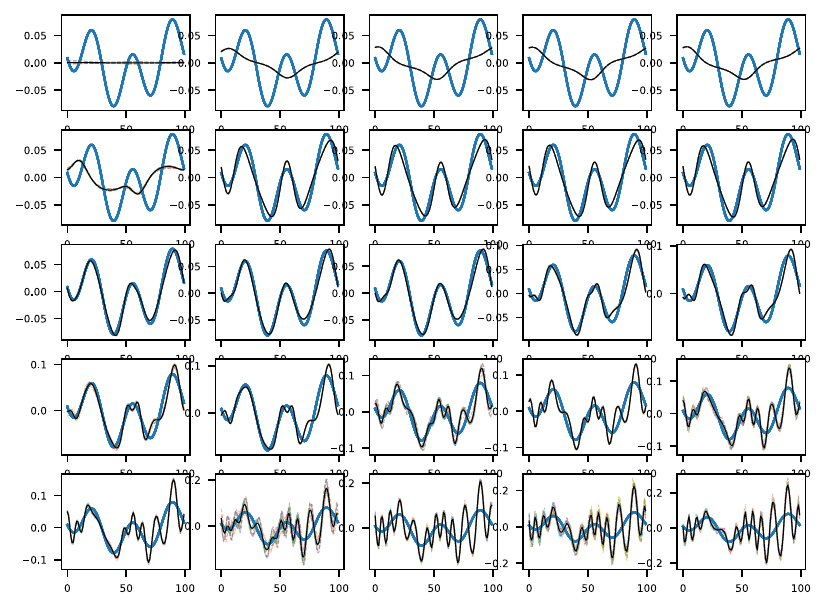}
\label{Fig16a}}
\subfigure[]{
\includegraphics[scale=0.55]{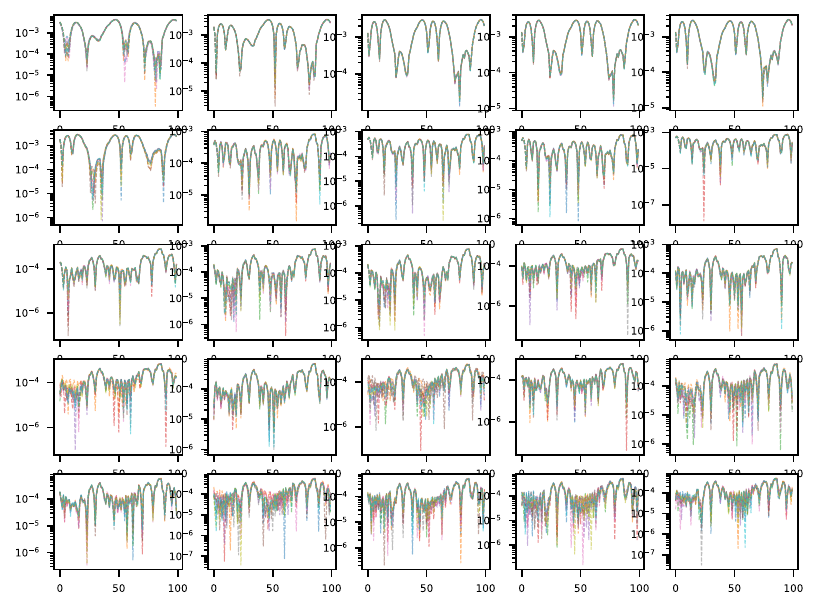}
\label{Fig16b}}
\vspace{-10pt}
\caption[Figure]{\footnotesize \Cref{ex:ex02}: \subref{Fig16a}~Ten i.i.d. samples of the Monte Carlo estimator $u^{(r)}_{\omega^1, \varepsilon, N}$, $r = \overline{1,25}$, at $\big( \mathbf{x}_j^1 \big)_{j = \overline{1, M_1}} \subset \Gamma_1$, their empirical mean (black continuous line), and the exact solution $u^{\rm (ex)}\big|_{\Gamma_1}$ given by \eqref{eq:2D-sol4} (blue continuous line), and \subref{Fig16b}~the absolute errors $\big\vert u_i^D - \widehat{u}_i^{D,r} \big\vert$, $i = \overline{1, M_D}$, $r = \overline{1, 25}$.}
\label{Fig16}
\end{figure}

\begin{figure}[H]
\centering
\subfigure[]{
\includegraphics[scale=0.6]{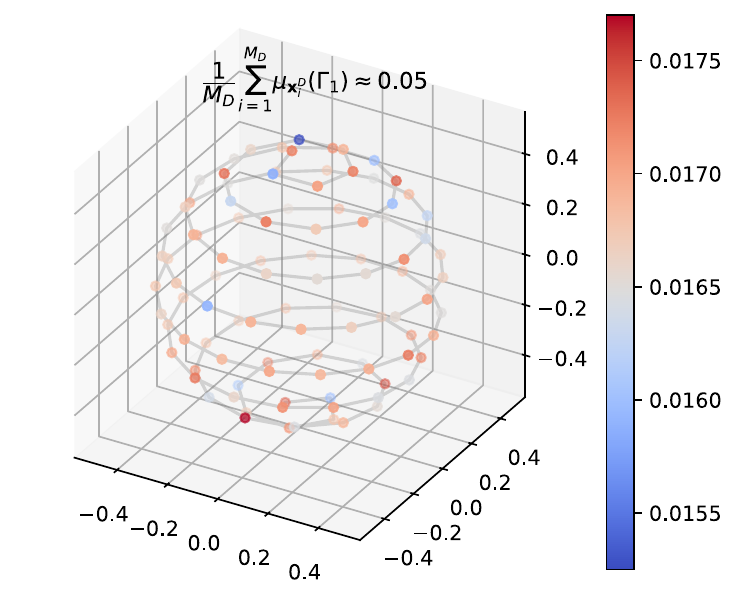}
\label{Fig17a}}
\subfigure[]{
\includegraphics[scale=0.55]{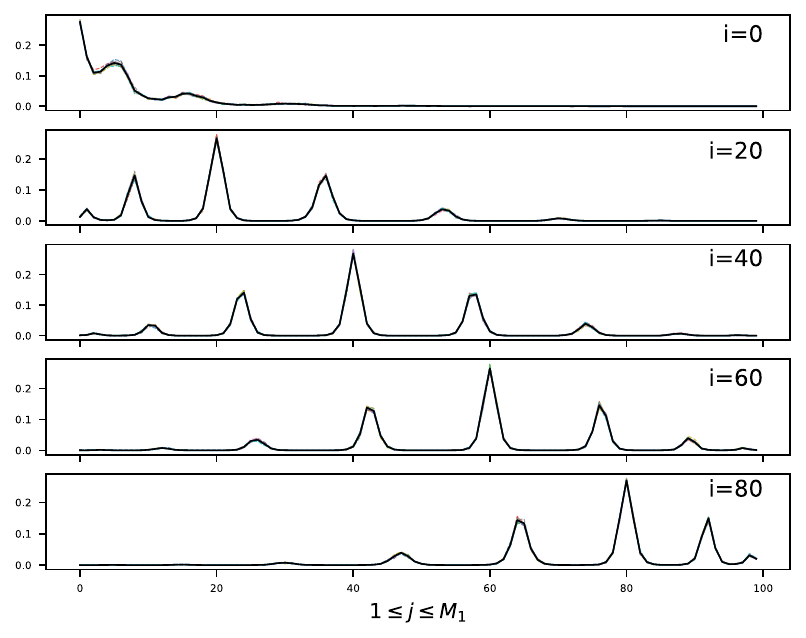}
\label{Fig17c}}
\subfigure[]{
\includegraphics[scale=0.55]{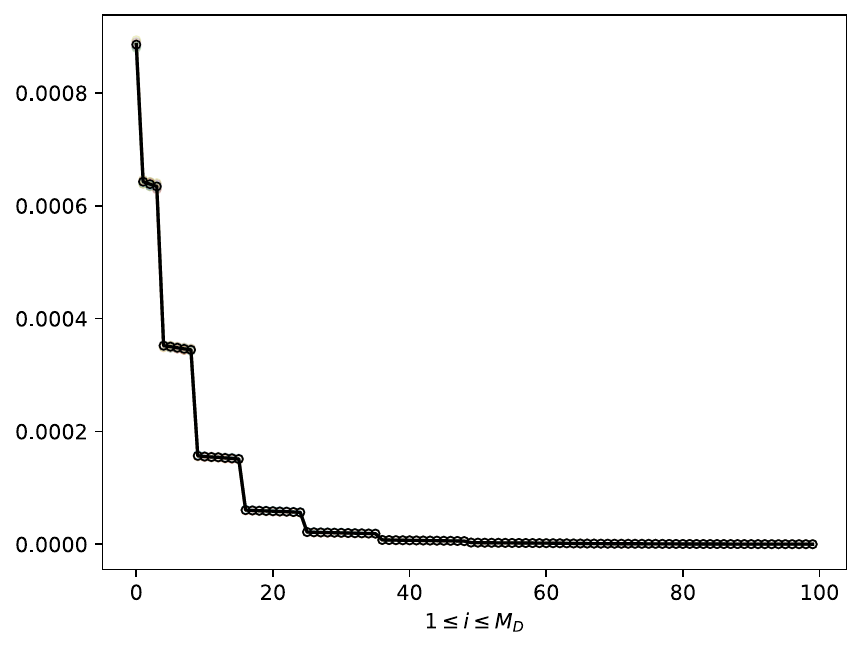}
\label{Fig17d}}
\subfigure[]{
\includegraphics[scale=0.6]{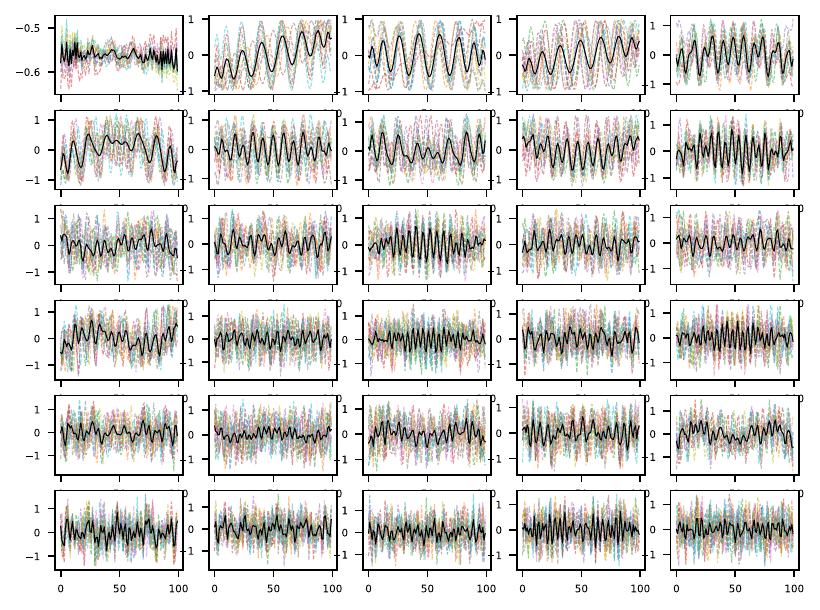}
\label{Fig17f}}
\vspace{-10pt}
\caption[Figure]{\footnotesize \Cref{ex:ex06}: \subref{Fig17a}~One sample 
of $\dfrac{1}{M_D} \displaystyle \sum_{i=1}^{M_D} \rho_{\mathbf{x}_i^D, \omega^1, \varepsilon, N}\big( \mathbf{x}_j^1 \big)$, retrieved on $\Gamma_1$.
\subref{Fig17c}~Ten i.i.d. samples of the elliptic density estimators $\rho_{\mathbf{x}_i^D, \omega^1, \varepsilon, N}\big( \mathbf{x}_j^1 \big)$ and their empirical mean (black continuous line), for $j \in \big\{ 0, 20, 40, 60, 80 \big\}$. 
\subref{Fig17d}~Ten i.i.d. samples of the eigenvalues $\widetilde{\lambda}^{\boldsymbol{\nu}}_i$, $i = \overline{1,M_D}$, of the Monte Carlo estimator $\boldsymbol{\Lambda}^{\boldsymbol{\nu}}_{\omega^1, \varepsilon, N}$ and their empirical mean (black continuous line), represented on a linear scale. 
\subref{Fig17f}~Ten i.i.d. samples of the approximate eigenfunction estimator $\widetilde{u}_\ell$, $\ell = \overline{1,30}$, and their empirical mean (black continuous line).
}
\label{Fig17}
\end{figure}
\vspace{-10pt}

\begin{figure}[H]
\centering
\subfigure[]{
\includegraphics[scale=0.6]{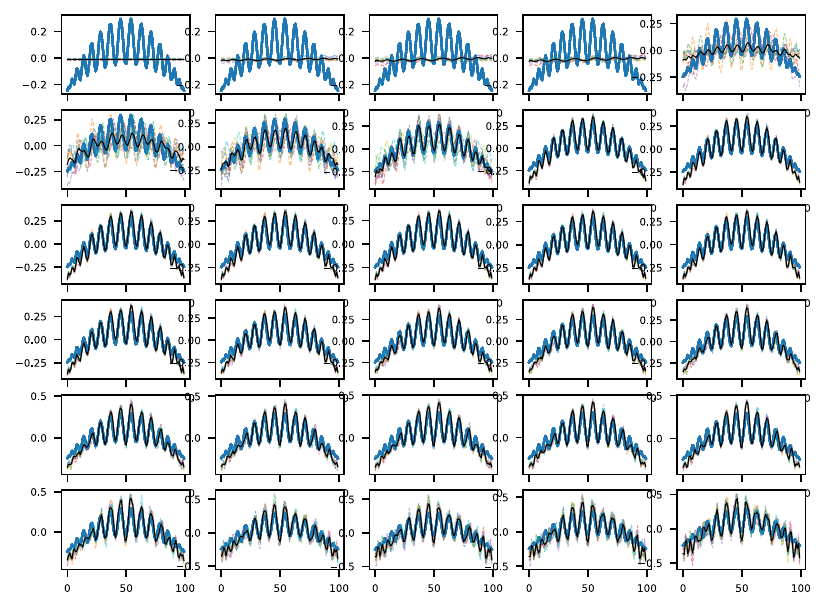}
\label{Fig18a}}
\subfigure[]{
\includegraphics[scale=0.6]{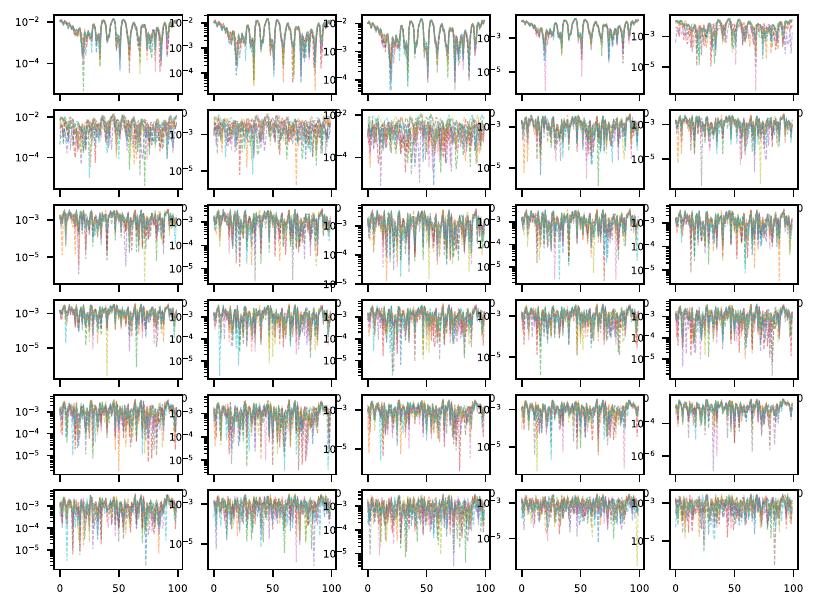}
\label{Fig18d}}
\vspace{-10pt}
\caption[Figure]{\footnotesize \Cref{ex:ex06}: \subref{Fig18a}~Ten i.i.d. samples of the Monte Carlo estimator $u^{(r)}_{\omega^1, \varepsilon, N}$, $r = \overline{1,30}$, at $\big( \mathbf{x}_j^1 \big)_{j = \overline{1, M_1}} \subset \Gamma_1$, their empirical mean (black continuous line), and the exact solution $u^{\rm (ex)}\big|_{\Gamma_1}$ given by \eqref{eq:3D-sol1} (blue continuous line), and \subref{Fig18d}~the absolute errors $\big\vert u_i^D - \widehat{u}_i^{D,r} \big\vert$, $i = \overline{1,M_D}$, $r = \overline{1,30}$.}
\label{Fig18ad}
\end{figure}
\vspace{-20pt}

\begin{figure}[H]
\centering
\subfigure[]{
\includegraphics[scale=0.6]{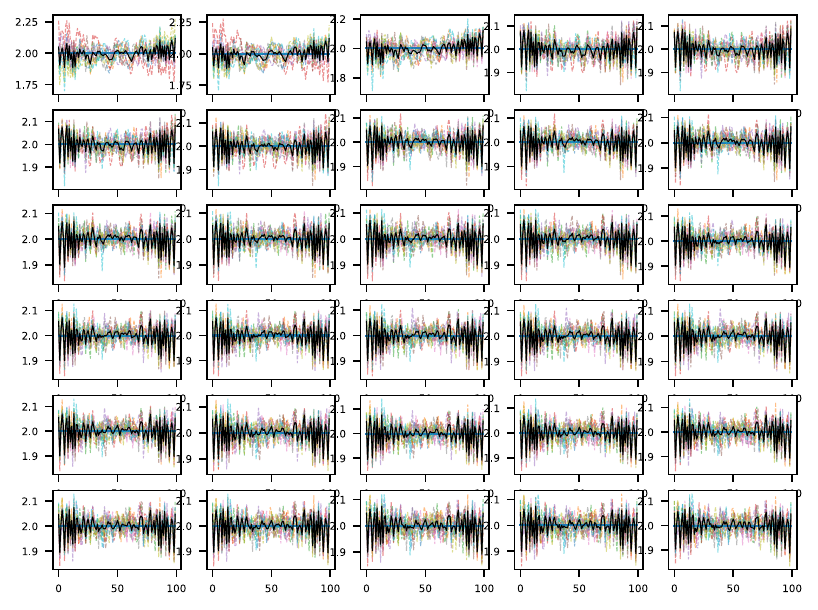}
\label{Fig18b}}
\subfigure[]{
\includegraphics[scale=0.6]{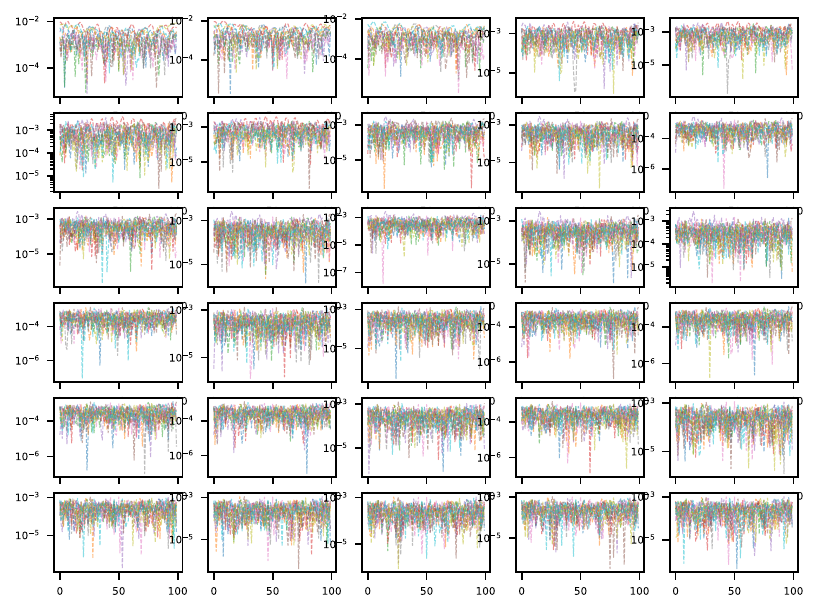}
\label{Fig18e}}
\vspace{-10pt}
\caption[Figure]{\footnotesize \Cref{ex:ex06}: \subref{Fig18b}~Ten i.i.d. samples of the Monte Carlo estimator $u^{(r)}_{\omega^1, \varepsilon, N}$, $r = \overline{1,30}$, at $\big( \mathbf{x}_j^1 \big)_{j = \overline{1, M_1}} \subset \Gamma_1$, their empirical mean (black continuous line), and the exact solution $u^{\rm (ex)}\big|_{\Gamma_1}$ given by \eqref{eq:3D-sol2} (blue continuous line), and \subref{Fig18e}~the absolute errors $\big\vert u_i^D - \widehat{u}_i^{D,r} \big\vert$, $i = \overline{1,M_D}$, $r = \overline{1,30}$.}
\label{Fig18be}
\end{figure}
\vspace{-20pt}

\begin{figure}[H]
\centering
\subfigure[]{
\includegraphics[scale=0.6]{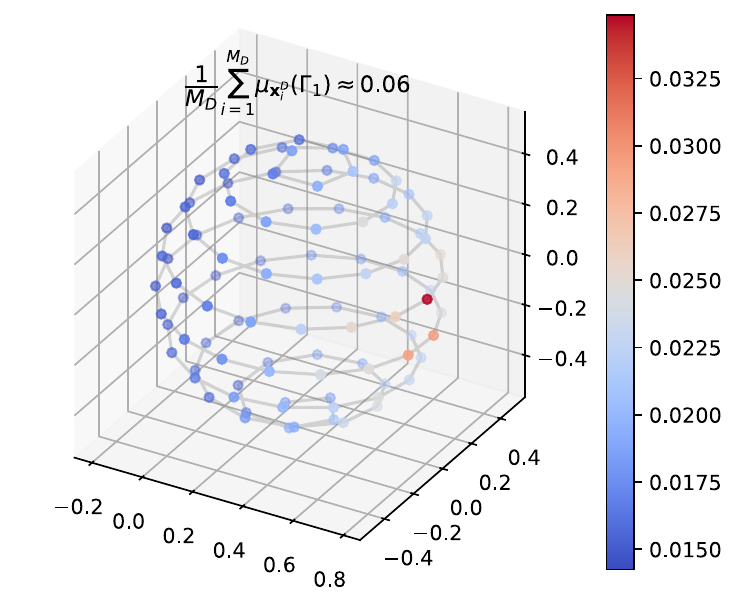}
\label{Fig19a}}
\subfigure[]{
\includegraphics[scale=0.55]{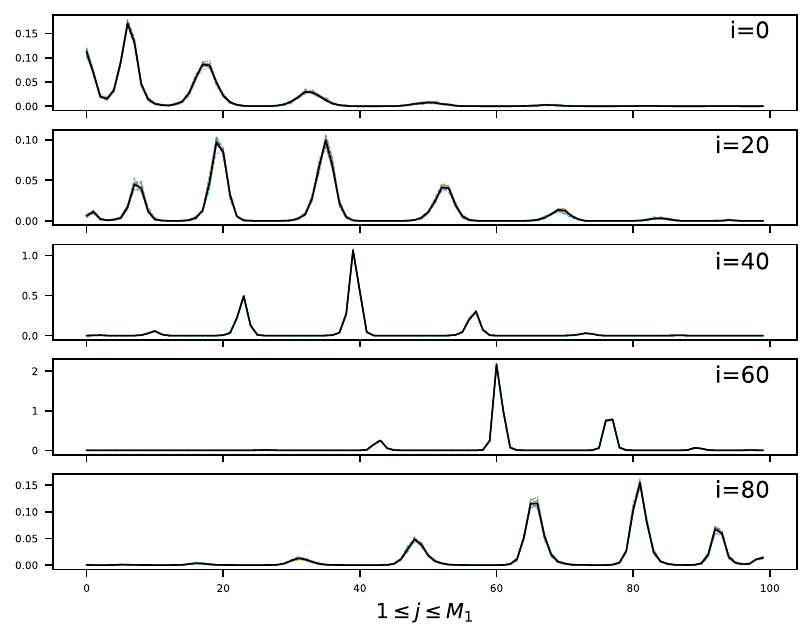}
\label{Fig19c}}
\subfigure[]{
\includegraphics[scale=0.55]{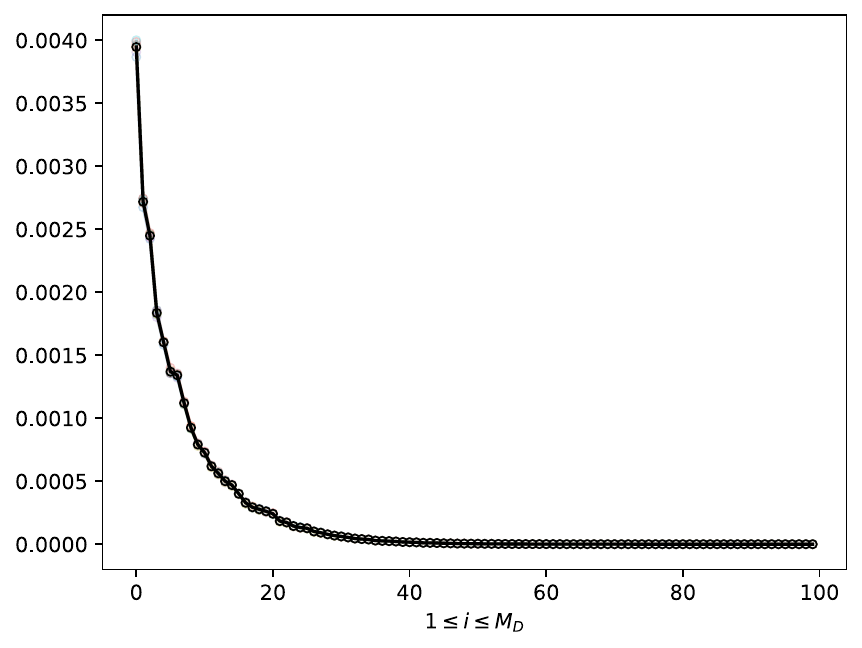}
\label{Fig19d}}
\subfigure[]{
\includegraphics[scale=0.6]{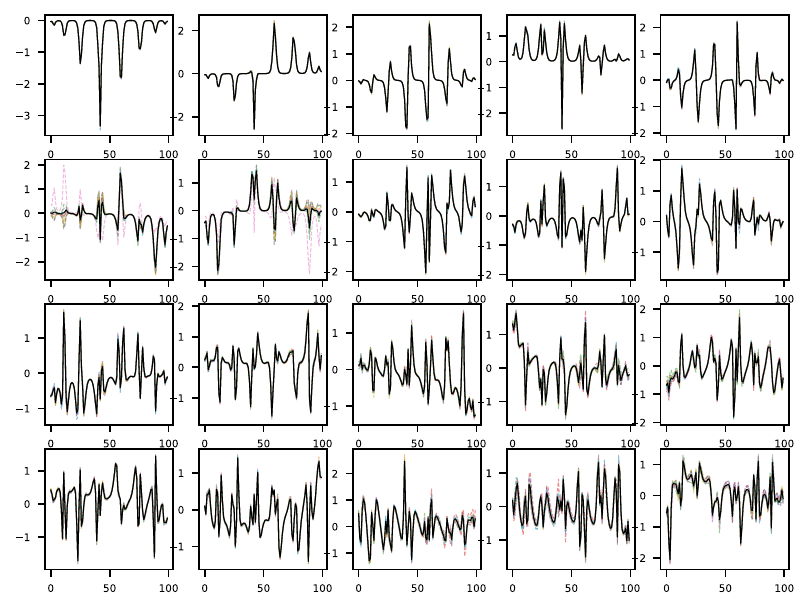}
\label{Fig19f}}
\vspace{-10pt}
\caption[Figure]{\footnotesize \Cref{ex:ex07}: \subref{Fig19a}~One sample of $\dfrac{1}{M_D} \displaystyle \sum_{i=1}^{M_D} \rho_{\mathbf{x}_i^D, \omega^1, \varepsilon, N}\big( \mathbf{x}_j^1 \big)$, retrieved on $\Gamma_1$. 
\subref{Fig19c}~Ten i.i.d. samples of the elliptic density estimators $\rho_{\mathbf{x}_i^D, \omega^1, \varepsilon, N}\big( \mathbf{x}_j^1 \big)$ and their empirical mean (black continuous line), for $j  \in \big\{ 0, 20, 40, 60, 80 \big\}$. 
\subref{Fig19d}~Ten i.i.d. samples of the eigenvalues $\widetilde{\lambda}^{\boldsymbol{\nu}}_i$, $i = \overline{1,M_D}$, of the Monte-Carlo estimator $\boldsymbol{\Lambda}^{\boldsymbol{\nu}}_{\omega^1, \varepsilon, N}$ and their empirical mean (black continuous line), represented on a linear scale. 
\subref{Fig19f}~Ten i.i.d. samples of the approximate eigenfunction estimator $\widetilde{u}_\ell$, $\ell = \overline{1,20}$, and their empirical mean (black continuous line).
}
\label{Fig19}
\end{figure}
\vspace{-10pt}
\begin{figure}[H]
\centering
\subfigure[]{
\includegraphics[scale=0.6]{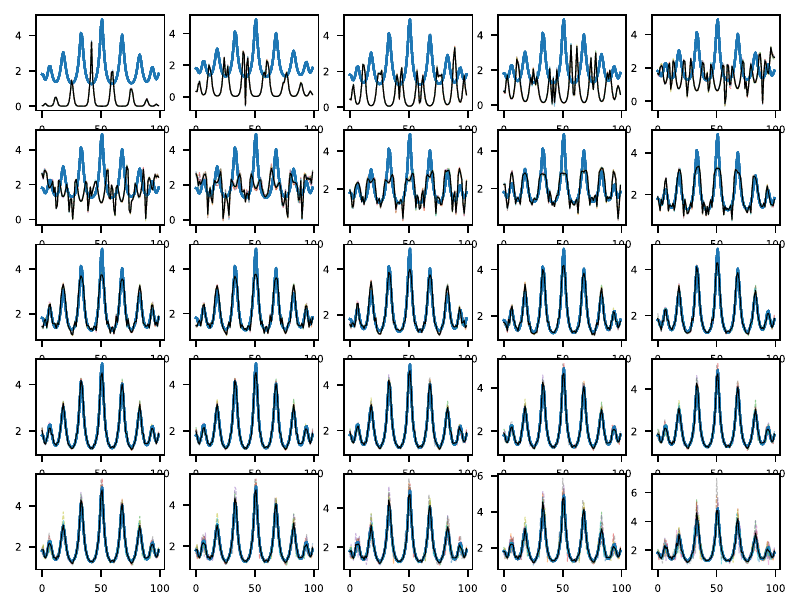}
\label{Fig20b}}
\subfigure[]{
\includegraphics[scale=0.6]{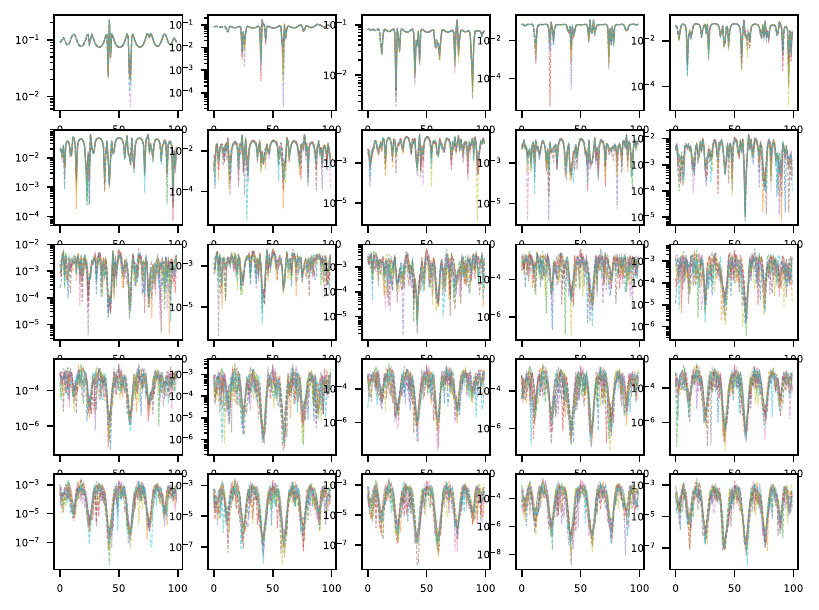}
\label{Fig20e}}
\vspace{-10pt}
\caption[Figure]{\footnotesize \Cref{ex:ex07}: \subref{Fig20b}~Ten i.i.d. samples of the Monte Carlo estimator $u^{(r)}_{\omega^1, \varepsilon, N}$, $r \in \{1, 3, \ldots, 75 \}$, at $\big( \mathbf{x}_j^1 \big)_{j = \overline{1, M_1}} \subset \Gamma_1$, their empirical mean (black continuous line), and the exact solution $u^{\rm (ex)}\big|_{\Gamma_1}$ given by \eqref{eq:3D-sol2} (blue continuous line), and \subref{Fig20e}~the absolute errors $\big\vert u_i^D - \widehat{u}_i^{D,r} \big\vert$, $i = \overline{1,M_D}$, $r \in \{1, 3, \ldots, 75 \}$.}
\label{Fig20be}
\end{figure}
\vspace{-20pt}

\end{document}